\def\R{\mathbb{R}}
\def\Rinf{\R\cup \{+\infty\}}
\def\cC{\mathcal{C}}
\def\cI{\mathcal{I}}
\def\cL{\mathcal{L}}
\def\cN{\mathcal{N}}
\def\cO{\mathcal{O}}
\def\cP{\mathcal{P}}
\def\cS{\mathcal{S}}
\def\cT{\mathcal{T}}
\def\a{\alpha}
\def\g{\gamma}
\def\d{\delta}
\def\l{\lambda}
\def\s{\sigma}
\def\p{\partial}
\def\o{\omega}
\def\veps{\varepsilon}
\def\vphi{\varphi}
\def\O{\Omega}
\def\G{\Gamma}
\def\GD{{\Gamma_D}}
\def\GN{{\Gamma_N}}
\def\DD{{\rm D}}
\def\wto{\rightharpoonup}
\def\tz{\widetilde{z}}
\def\ou{\overline{u}}
\def\tu{\widetilde{u}}
\newcommand{\dv}[1]{\,{\mathrm d}#1}
\newcommand{\wcheck}[1]{#1\hspace{-.8ex}\mbox{\huge {\lower.45ex \hbox{$\textstyle \check{}$}}} \hspace{.5ex}}
\DeclareMathOperator{\diver}{div}
\DeclareMathOperator{\diam}{diam}
\DeclareMathOperator{\trace}{tr}
\let\oldmarginpar\marginpar
\renewcommand\marginpar[1]{
  \oldmarginpar[\raggedleft\footnotesize #1]
  {\raggedright\footnotesize #1}}
\newtheorem{definition}{Definition}
\newtheorem{lemma}[definition]{Lemma}
\newtheorem{proposition}[definition]{Proposition}
\newtheorem{theorem}[definition]{Theorem}
\newtheorem{remark}[definition]{Remark}
\newtheorem{remarks}[definition]{Remarks}
\newtheorem{example}[definition]{Example}
\newtheorem{algorithm}[definition]{Algorithm}
\numberwithin{definition}{section}
\def\RT{{\mathcal{R}T}}
\def\CR{{cr}}
\def\oz{\overline{z}}
\def\ou{\overline{u}}
\def\tg{\widetilde{g}}
\def\tI{\widetilde{I}}
\def\ov{\overline{v}}
\def\nablah{\nabla_{\! h}}
\def\tvphi{\widetilde{\vphi}}
\def\tz{{\widetilde{z}}}
\def\tF{\widetilde{F}}
\def\tS{\widetilde{S}}
\begin{document}
\title[Nonconforming methods for nonlinear PDEs]{Nonconforming
discretizations of convex minimization problems and precise relations to 
mixed methods}
\author{S\"oren Bartels}
\address{Abteilung f\"ur Angewandte Mathematik,  
Albert-Ludwigs-Universit\"at Freiburg, Hermann-Herder-Str.~10, 
79104 Freiburg i.~Br., Germany}
\email{bartels@mathematik.uni-freiburg.de}
\date{\today}
\renewcommand{\subjclassname}{
\textup{2010} Mathematics Subject Classification}
\subjclass[2010]{65N12 65N15 65N30 49M25}
\begin{abstract}
This article discusses nonconforming finite element methods for
convex minimization problems and systematically derives dual mixed 
formulations. Duality relations lead to
simple error estimates that avoid an explicit treatment of nonconformity
errors. A reconstruction formula provides the discrete solution of the 
dual problem via a simple postprocessing procedure which implies a strong
duality relation and is of interest in a~posteriori error estimation.
The framework applies to differentiable and nonsmooth problems, examples
include $p$-Laplace, total-variation regularized, 
and obstacle problems. Numerical experiments illustrate advantages
of nonconforming over standard conforming methods. 
\end{abstract}
 
\keywords{Nonconforming methods, mixed methods, nonsmooth problems, 
discrete duality, error estimates}

\maketitle

\section{Introduction}
Mixed finite element methods as introduced in~\cite{RavTho77,BoBrFo13}
provide an attractive framework to  
approximate partial differential equations in divergence form 
since they lead to accurate approximations of fluxes. 
For the Poisson problem it is well understood
that a close connection of mixed methods to nonconforming methods
exists, cf.~\cite{Mari85,ArnBre85}. This is of practical interest since mixed 
finite element methods require the solution of saddle-point problems while 
nonconforming methods lead to positive definite linear systems. 
Moreover, the nonconforming Crouzeix--Raviart element of~\cite{CroRav73}
has proved to be particularly robust and flexible to provide accurate 
approximations for Stokes equations \cite{GirRav86-book}, for nearly incompressible 
Navier--Lam\'e equations \cite{HanLar03}, 
and for singular minimizers related to the Lavrentiev phenomenon
in the calculus of variations \cite{Ortn11}. Another useful feature is 
that the element
is suitable to compute reliable lower bounds for eigenvalue 
problems \cite{ArmDur04,CarGed14}. Further aspects of the Crouzeix--Raviart 
element are addressed in~\cite{Bren15}.
In this article we show that the relation to mixed methods applies to a large class
of convex minimization problems provided an appropriate discretization
is used. From a discrete duality relation we derive quasi-optimal error 
estimates for the
modified discretizations, show that they apply to various nonlinear partial
differential equations and variational inequalities, and illustrate 
the theoretical findings via simulations for certain singular limit settings. 
The results of this article are inspired by recent work on quasi-optimal
convergence rates for nonconforming approximations of total-variation 
regularized problems in~\cite{ChaPoc19-pre}.

\subsection{Convex minimization}
To explain the main ideas we consider a convex variational problem 
defined via a minimization of the energy functional
\[
I(u) =  \int_\O \phi(\nabla u) \dv{x} - \int_\O f u \dv{x},
\]
in a Sobolev space $W^{1,p}_D(\O)$, i.e., subject to homogeneous 
Dirichlet boundary conditions on a
boundary part $\GD\subset \p\O$; we set $\GN=\p\O\setminus \GD$. 
The dual problem is obtained by using the relation $\phi^{**}=\phi$ 
with  the convex conjugate
\[
\phi^*(t) = \sup_{s\in \R^d} s\cdot t - \phi(s)
\]
It consists in maximizing the functional
\[
D(z) = - \int_\O \phi^*(z) \dv{x} 
\]
in the space of vector fields $z\in L^{p'}(\O;\R^d)$ whose 
distributional divergence $\diver z$ belongs to $L^{p'}(\O)$ 
with vanishing normal component on $\GN$ and which
satisfy the constraint
\[
-\diver z = f.
\]
It turns out that solutions are related via 
\[
z = D \phi(\nabla u) \quad \Longleftrightarrow \quad \nabla u = D\phi^*(z),
\]
and satisfy the Euler--Lagrange equation
\[
-\diver D \phi(\nabla u) = f
\]
and the saddle-point system 
\[
D\phi^* (z) - \nabla \l = 0, \quad -\diver z = f,
\]
where $\l$ is the Lagrange multiplier related to the divergence constraint.
One directly verifies that $\l= u$. 

\subsection{Mixed and nonconforming methods}
A low order finite element discretization of the dual problem uses
the Raviart--Thomas finite element space $\RT^0_{\!N}(\cT_h)$ that contains
certain piecewise linear vector fields whose distributional divergence
is given by a piecewise constant function and which have vanishing 
normal component on $\GN$. In the quadratic case with
$\phi(s)=|s|^2/2$ and $\phi^*(t) =|t|^2/2$, corresponding to the Poisson
problem, the numerical method determines a uniquely defined vector field 
$z_h \in \RT^0_{\!N}(\cT_h)$ and an elementwise constant function 
$\ou_h\in \cL^0(\cT_h)$ that solve 
\begin{equation}\label{eq:poisson_mixed_discr}
(z_h,y_h) + (\ou_h,\diver y_h) = 0, \quad (\diver z_h, \ov_h) = -(f,\ov_h)
\end{equation}
for all $(y_h,\ov_h) \in \RT^0_{\!N}(\cT_h)\times \cL^0(\cT_h)$, where 
$(\cdot,\cdot)$ denotes the $L^2$ inner product of functions or 
vector fields with associated norm $\|\cdot\|$. 
The low order nonconforming approximation of the primal problem uses
the Crouzeix--Raviart finite element space $\cS^{1,\CR}_D(\cT_h)$ of
piecewise linear functions that are continuous at midpoints of
sides of elements and vanish at the midpoints of sides belonging to $\GD$. 
It provides a nonconforming approximation of the
Sobolev space $W^{1,2}_D(\O)$. With the piecewise application of the
gradient operator denoted by $\nablah$ we have that the discrete
solution $u_h \in \cS^{1,\CR}_D(\cT_h)$ satisfies 
\[
(\nablah u_h,\nablah v_h) = (f_h,v_h)
\]
for all $v_h\in \cS^{1,\CR}_D(\cT_h)$. It has been shown in~\cite{Mari85}
that the solutions $z_h$ and $u_h$ are related via
\[
z_h|_T(x) = \nablah u_h|_T - \frac{f_h|_T}{d}  (x-x_T)
\]
on every element $T\in \cT_h$ with midpoint $x_T\in T$, 
provided that $f_h=\Pi_{h,0}f $ is the $L^2$ projection of $f$ 
onto $\cL^0(\cT_h)$. Moreover, it follows that 
\[
\ou_h|_T = u_h(x_T) + \frac{f_h|_T}{d^2 |T|} \|x-x_T\|_{L^2(T)}^2. 
\]
Hence, the solution of the mixed finite element method can entirely
be determined by the solution of the nonconforming discretization
and vice versa. We show that
the relations can be generalized and that a modification of the dual
problem simplifies the second equation. 

\subsection{Generalized reconstruction}
We consider the nonconforming discretization of the primal problem
given by the minimization of 
\[
I_h(u_h) =  \int_\O \phi(\nablah u_h) \dv{x} 
- \int_\O f_h u_h \dv{x}
\]
in the set of all $u_h \in \cS^{1,\CR}_D(\cT_h)$. Solutions 
satisfy
\[
\big(D\phi(\nablah u_h),\nablah v_h\big) = (f_h,v_h)
\]
for all $v_h\in \cS^{1,\CR}_D(\cT_h)$. The systematically obtained
discretization 
of the dual problem consists in maximizing the discrete functional
\[
D_h(z_h) =  - \int_\O \phi^*(\Pi_{h,0} z_h) \dv{x}
\]
for $z_h \in \RT^0_{\!N}(\cT_h)$ subject to the constraint
\[
- \diver z_h  = f_h.
\]
The existence of a solution $z_h$ follows from surjectivity properties
of the divergence operator restricted to $\RT^0_{\!N}(\cT_h)$. 
In contrast to consistent discretizations
of the dual problem, here the operator $\Pi_{h,0}$ is included 
in defining $D_h$ leading to discrete duality relations. 
It does not limit the coercivity properties 
of the problem since for divergence-free vector fields in $\RT^0(\cT_h)$
we have that $\Pi_{h,0} y_h = y_h$. In fact, including the operator
$\Pi_{h,0}$ has the interpretation of using quadrature which makes
the numerical realization substantially easier. 
By imposing the divergence constraint via a Lagrange multiplier $\ou_h$ one
finds that optimal pairs $(z_h,\ou_h)\in \RT^0_{\!N}(\cT_h)\times \cL^0(\cT_h)$ 
satisfy the mixed formulation of the dual problem
\[\begin{split}
\big(D\phi^*(\Pi_{h,0} z_h), \Pi_{h,0} y_h\big) + (\ou_h,\diver y_h) &= 0, \\
(\diver z_h,\ov_h) \hspace*{4.1cm}  &= -(f_h,\ov_h), 
\end{split}\]
for all $(y_h,\ov_h) \in \RT^0_{\!N}(\cT_h)\times \cL^0(\cT_h)$. We claim that 
we have
\[
z_h|_T(x) = D\phi(\nablah u_h|_T) - \frac{f_h|_T}{d} (x-x_T)
\]
and 
\[
\ou_h|_T = u_h(x_T)
\]
for all $T\in \cT_h$. To see this, let $\tz_h$ and $\tu_h$ denote the right-hand sides
of the asserted identities for $z_h$ and $\ou_h$. We have that $-\diver \tz_h|_T = f_h|_T$
for all $T\in \cT_h$, and 
\begin{equation}\label{eq:dual_var}
\Pi_{h,0} \tz_h = D\phi(\nablah u_h).
\end{equation}
Hence, for all $v_h\in \cS^{1,\CR}_D(\cT_h)$ we have 
\[
(\tz_h,\nablah v_h) = \big(D\phi(\nablah u_h),\nablah v_h\big)
 = (f_h,v_h)  = (z_h ,\nablah v_h),
\]
where we used an integration-by-parts formula for products of Raviart--Thomas
vector fields and gradients of Crouzeix--Raviart functions. Since 
$\diver (\tz_h-z_h)|_T = 0$ for every $T\in \cT_h$, this identity
implies that $\tz_h-z_h\in \RT^0_{\!N}(\cT_h)$ and in particular that
$\tz_h \in \RT^0_{\!N}(\cT_h)$. Using that $[D\phi^*]^{-1} = D\phi$ we find that
\[
D\phi^*(\Pi_{h,0} \tz_h) = \nablah u_h.
\]
Since $\tu_h$ coincides with the elementwise average of $u_h$ this implies 
that
\[
\big(D\phi^*(\Pi_{h,0} \tz_h),\Pi_{h,0} y_h\big)  + ( \tu_h, \diver y_h) = 0
\]
for all $y_h\in \RT^0_{\!N}(\cT_h)$. Hence, we see that $(\tz_h,\tu_h)$ solves the
mixed finite element formulation and in case of uniqueness coincides with the 
pair $(z_h,\ou_h)$. The crucial identity~\eqref{eq:dual_var} also implies the
important duality relation $I_h(u_h) =  D_h(z_h)$. 
It is also possible to construct the solution $u_h$ of the 
nonconforming discretization from the pair $(z_h,\ou_h)$ solving the mixed
formulation of the dual problem. One directly verifies that this is given by
\[
u_h(x) = \ou_h|_T + D\phi^*(\Pi_{h,0} z_h|_T) \cdot (x-x_T)
\]
for every $T\in \cT_h$ and all $x\in T$. The reconstruction formulas are 
related to discrete Lagrange functionals, e.g., 
\[
L_h(u_h,z_h) 
= \int_\O \nablah u_h \cdot z_h - \phi^*(\Pi_{h,0} z_h) - f_h \Pi_{h,0} u_h \dv{x},
\]
and imply weak and strong discrete duality principles. We note that
related reconstructions in the case of the $p$-Laplace problem have been
identified in~\cite{LiLiCh18}.

\subsection{Error estimates}
The discrete duality relation $I_h(u_h) \ge D_h(z_h)$ provides a 
natural way to derive error estimates. With a coercivity functional 
$\s_{I_h}$ that measures strong convexity properties of $I_h$,
we have for a minimizing $u_h$ that 
\[
\s_{I_h}^2(u_h,v_h)  \le I_h(v_h) - I_h(u_h)
\]
for every $v_h\in \cS^{1,\CR}_D(\cT_h)$. Choosing $v_h = \cI_\CR u$ 
and using $I_h(u_h) \ge D_h(z_h) \ge D_h(\cI_\RT z)$ leads to 
\[
\d_h^2 =  \s_{I_h}^2(u_h,\cI_\CR u ) \le\int_\O \phi(\nablah \cI_\CR u) - f_h \cI_\CR u  
+ \phi^*(\Pi_{h,0} \cI_\RT z) \dv{x}.
\]
Noting that $-\diver \cI_\RT z = f_h$ and using an integration-by-parts
formula show that 
\[
\d_h^2 \le \int_\O \phi(\nablah \cI_\CR u) -
\Pi_{h,0} \cI_\RT z \cdot \nablah \cI_\CR u + \phi^*(\Pi_{h,0}  \cI_\RT z) \dv{x}.
\]
Fenchel's inequality implies that the integrand is 
nonnegative and vanishes if $\nablah \cI_\CR u = D \phi^*(\Pi_{h,0}  \cI_\RT z)$.
The identity $\nablah \cI_\CR u = \Pi_{h,0} \nabla u$ in combination
with Jensen's inequality, the duality relation $I(u) = D(z)$, and an
integration by parts using $-\diver z= f$ lead to
\[\begin{split}
\d_h^2 &\le \int_\O \phi(\nabla u) - \Pi_{h,0} \cI_\RT z \cdot \nabla u 
+ \phi^*(\Pi_{h,0}\cI_\RT z) \dv{x} \\
&= \int_\O -\phi^*(z) + (z- \Pi_{h,0} \cI_\RT z) \cdot \nabla u + \phi^*(\Pi_{h,0}\cI_\RT z) \dv{x}.
\end{split}\]
Finally, using convexity of $\phi^*$, i.e., 
\[
\phi^*(\Pi_{h,0} \cI_\RT z) \le 
\phi^*(z)  - D\phi^*(\Pi_{h,0} \cI_\RT z) \cdot (z-\Pi_{h,0} \cI_\RT z),
\]
and the relation $\nabla u = D\phi^*(z)$ lead to the general error estimate
\[
\d_h^2 \le 
\int_\O \big( D\phi^*(z) - D\phi^*(\Pi_{h,0} \cI_\RT z)\big) 
\cdot (z-\Pi_{h,0} \cI_\RT z)\dv{x}.
\]
In case of a Lipschitz continuous mapping $D\phi^*$ and a regularity
property $z\in W^{1,2}(\O;\R^d)$ we directly deduce a linear convergence
rate for $\d_h$. The estimate and conceptual approach apply however to a 
significantly larger class of 
variational problems including nonsmooth problems. We remark that the same 
upper bound is obtained for the error in approximating
the dual variable, i.e., for $\s_{D_h}^2(z_h,\cI_\RT z)$. 
The error estimate can be improved by incorporating
strong convexity properties of $\phi^*$. For the Poisson problem the derivation 
then corresponds to the estimates 
\[
\|\nablah (u_h - \Pi_{h,0}\cI_\CR u) \|
\le \|\nablah \cI_\CR u - \Pi_{h,0}\cI_\RT z \| 
\le \|z-\Pi_{h,0} \cI_\RT z\|,
\]
i.e., the discretization error related to the nonconforming discretization with
the Crouzeix--Raviart element is controlled by the interpolation error 
for approximating the flux variable in the Raviart--Thomas finite element space.
By making use of interpolation estimates and the triangle inequality this
estimate implies the well known error estimate
\[
\|\nablah u_h - \nabla u \| \le c h \|D^2 u\|.
\]
The derivation given here circumvents the use of a Strang lemma, cf.~\cite{BreSco08-book},
or the decomposition of functions as in~\cite{Gudi10}, to control nonconformity errors.
Another application of duality relations arises in a~posteriori error estimates
for conforming discretizations \cite{Repi00,Brae09}.
If $u_h^c \in W^{1,p}_D(\O)$ is a conforming approximation of the exact solution~$u$ then we
have, assuming for simplicity that $f = f_h$ so that $I_h=I$ and $D_h = D$
on the discrete spaces, that for all $z_h\in \RT^0_{\!N}(\cT_h)$ with
$-\diver z_h = f_h$ we have
\[\begin{split}
\s_I^2(u,u_h^c) 
& \le I(u_h^c) - I(u) \le I(u_h^c) - D(z_h) \\
&= \int_\O \phi(\nabla u_h^c) \dv{x} - z_h \cdot \nabla u_h^c 
+ \phi^*(z_h) \dv{x} =: \frac12 \eta^2(u_h^c,z_h)
\end{split}\]
By Fenchel's inequality the integrand on the right-hand side is nonnegative and 
vanishes if the 
optimality condition $\nabla u_h^c = D\phi^*(z_h)$ holds which can in 
general not be satisfied on the discrete level. The optimal choice
of $z_h$ solves the discrete dual problem which by the arguments given
above is obtained via solving the nonconforming discretization and using the 
reconstructed flux
\[
z_h = D\phi(\nablah u_h) - (f_h/d) (\cdot-x_T).
\]
For the Poisson problem we deduce the estimate
\[
\|\nabla (u_h^c -u) \| \le \eta(u_h^c,z_h) =   \|\nabla u_h^c - z_h \|,
\]
and with the reconstruction relation $z_h = \nablah u_h - (f_h/d) (\cdot-x_T)$ in 
case that $\nabla u_h^c$ is elementwise constant,
\[
\|\nabla (u_h^c -u) \| \le \|\nabla u_h^c - \nablah u_h\|
+  \|(f_h/d)(\cdot -x_T)\| = \widetilde{\eta}(u_h^c,u_h).
\]
The error estimator $\widetilde{\eta}(u_h^c,u_h)$ is 
also efficient, which an application of the triangle inequality and the
equivalence of the conforming and nonconforming method in case of the
Poisson problem show, cf.~\cite{Bren15}.

\subsection{Outline}
The article is organized as follows. We collect various relevant facts 
about Crouzeix--Raviart and Raviart--Thomas finite element spaces in 
Section~\ref{sec:fem_prelim}. In Section~\ref{sec:general} we present 
a general theory leading to an 
error estimate for differentiable convex minimization problems and
a general flux reconstruction formula. Nonsmooth problems including
a quadratic obstacle problem, a total-variation regularized problem, and
an infinity Laplace problem require certain modifications and
are discussed in Section~\ref{sec:nonsmooth}. In preparation of 
numerical experiments we devise iterative algorithms for the
practical realization in Section~\ref{sec:iterative}. The results
of various numerical experiments that reveal certain advantages of
nonconforming methods are presented in Section~\ref{sec:num_ex}.

\section{Finite element spaces}\label{sec:fem_prelim}
Throughout what follows we let $(\cT_h)_{h>0}$ be a sequence of
regular triangulations of 
the bounded polyhedral Lipschitz domain $\O\subset \R^d$ into triangles
or tetrahedra for $d=2$ and $d=3$, respectively. We let $P_k(T)$ denote
the set of polynomials of maximal total degree $k$ on $T\in \cT_h$ and
define the set of discontinuous, elementwise polynomial functions or 
vector fields
\[
\cL^k(\cT_h)^\ell = \{ w_h \in L^\infty(\O;\R^\ell): w_h|_T \in P_k(T) 
\text{ for all }T\in \cT_h\}.
\]
The parameter $h>0$ refers to the maximal mesh-size of the triangulation
$\cT_h$. The set of sides of elements is denoted by $\cS_h$. We let
$x_S$ and $x_T$ denote the midpoints (barycenters) of sides and elements, 
respectively. The $L^2$ projection onto piecewise constant functions or 
vector fields is denoted by
\[
\Pi_{h,0} : L^1(\O;\R^\ell) \to \cL^0(\cT_h)^\ell.
\]
For an elementwise affine function it corresponds to the evaluation 
at element midpoints. Standard notation is used for Sobolev spaces, in 
particular
\[\begin{split}
W^{1,p}_D(\O) &= \{v\in W^{1,p}(\O): v|_\GD = 0 \}, \\
W^q_{\!N}(\diver;\O) &= \{ y \in L^q(\O;\R^d): \diver y \in L^q(\O), \,
y\cdot n = 0 \text{ on }\GN\}.
\end{split}\]
We let $BV(\O)$ denote space of functions in $L^1(\O)$ with finite
total variation denoted $|\DD u|(\O)$. Most estimates derived below follow
from the boundedness of the trace operator 
\[
\trace: W^{1,p}(\O;\R^\ell) \to L^p(\p\O;\R^\ell), \quad v \mapsto v|_{\p\O},
\]
and the Poincar\'e inequality 
\[
\|v-\ov \|_{L^p(\o)} \le c_{p,\o}  \diam(\o) \|\nabla v \|_{L^p(\o)}, \quad 
\ov = |\o|^{-1} \int_\o v \dv{x},
\]
for Lipschitz domains $\o\subset \O$, functions $v\in W^{1,p}(\O;\R^\ell)$ with
mean integral $\ov$ on $\o$, and $1\le p \le \infty$. We occasionally 
make use of indicator functionals, which are for sets $K\subset X$ are defined
by
\[
I_K(s) = 
\begin{cases} 
+\infty & \mbox{for } s\not \in K, \\ 0 & \mbox{for } s\in K,
\end{cases} 
\]
for every $s\in X$. For details on the properties of finite element methods 
listed below we refer the reader to~\cite{Ciar78-book,BoBrFo13,BreSco08-book,ErnGue04-book,Bart16-book}.

\subsection{Crouzeix--Raviart finite elements}
The Crouzeix--Raviart finite element space of lowest order consists 
of piecewise affine functions that are continuous at the midpoints of
sides of elements, i.e., 
\[
\cS^{1,\CR}(\cT_h) = \{v_h \in \cL^1(\cT_h): v_h \text{ continuous in 
$x_S$ for all $S\in \cS_h$} \}.
\]
The space provides nonconforming approximations of Sobolev spaces 
$W^{1,p}(\O)$. The elementwise application of the gradient operator
to a function $v_h\in \cS^{1,\CR}(\cT_h)$ defines an elementwise
constant vector field $\nablah v_h$ via 
\[
\nablah v_h|_T = \nabla (v_h|_T)
\]
for all $T\in \cT_h$. For weakly differentiable functions 
$v\in W^{1,p}(\O)$ we have $\nablah v = \nabla v$. 
The subset of functions vanishing at midpoints
of boundary sides on $\GD$ is denoted by 
\[
\cS^{1,\CR}_D(\cT_h) = \{v_h\in \cS^{1,\CR}(\cT_h): v_h(x_S)=0
\text{ for all $S\in \cS_h$ with $S\subset \GD$}\}.
\]
We note that the jump of a function $v_h\in\cS^{1,\CR}(\cT_h)$ over
an inner element side $S\in \cS_h$ with neighboring elements $T_-,T_+\in \cT_h$,
defined by 
\[
[v_h](x) = v_h|_{T_+}(x) - v_h|_{T_-}(x),
\]
has vanishing integral mean over $S$. Similarly, if 
$v_h \in \cS^{1,\CR}_D(\cT_h)$ then the integral of $v_h|_S$ vanishes
on every boundary side $S\in \cS_h\cap \GD$. A basis of the 
space $\cS^{1,\CR}(\cT_h)$ is given by the functions 
$\vphi_S \in \cS^{1,\CR}(\cT_h)$, $S\in \cS_h$, satisfying 
\[
\vphi_S(x_{S'}) = \d_{S,S'}
\]
for all $S,S'\in \cS_h$. The function $\vphi_S$ vanishes on elements that
do not contain the side $S$ and is continuous with value~1 on $S$. A
quasi-interpolation operator is for $v\in W^{1,p}(\O)$ defined via
\[
\cI_\CR v = \sum_{S\in \cS_h} v_S \vphi_S, \quad v_S = |S|^{-1} \int_S v \dv{s},
\]
Since $\cI_\CR$ is bounded 
and preserves affine functions and averages of gradients, i.e., $\nablah \cI_\CR v = 
\Pi_{h,0} \nabla v$, we have the estimates 
\[
\|v-\cI_\CR v \|_{L^p(\O)} \le  c_\CR h \|\nabla v\|_{L^p(\O)}, 
\quad \|\nablah \cI_\CR v \|_{L^p(\O)} \le \|\nabla v \|_{L^p(\O)}
\]
for all $v\in W^{1,p}(\O)$, $1\le p\le \infty$. Moreover, we have 
$\|\cI_\CR v \|_{L^\infty(\O)} \le c_d \|v\|_{L^\infty(\O)}$  
with $c_d = (d-1)(d+1)$. For $v\in W^{2,p}(\O)$ with $1\le p \le \infty$ we
also have the interpolation estimates
\[
\|v-\cI_\CR v\|_{L^p(\O)} + h \|\nablah \cI_\CR v - \nabla v\|_{L^p(\O)} 
\le c_\CR' h^2 \|D^2 v\|_{L^p(\O)}.
\]
Finally, we note that there exists a  linear
enriching operator 
\[
E_h^\CR: \cS^{1,\CR}_D(\cT_h) \to W^{1,p}_D(\O)
\]
such that
\[
\|\nabla E_h^\CR v_h \|_{L^p(\O)} + h^{-1} 
\|E_h^\CR v_h - v_h\|_{L^p(\O)} \le c_E  \|\nablah v_h\|_{L^p(\O)}
\]
for $1\le p< \infty$, cf.~\cite{Bren15} in case $p=2$ and 
Appendix~\ref{app:enrich} for $p\neq 2$. 

\subsection{Raviart--Thomas finite elements}
The lowest order Raviart--Thomas finite element space is defined as
\[\begin{split}
\RT^0(\cT_h)  = \{y_h\in & W^1(\diver;\O): y_h|_T(x) = a_T + b_T (x-x_T), \\
&  a_T\in \R^d, \, b_T\in \R \text{ for all $T\in\cT_h$} \}.
\end{split}\]
Vector fields in $\RT^0(\cT_h)$ have continuous constant normal components
on element sides. The subset of vector fields with
vanishing normal component on the Neumann boundary $\GN$ is defined as
\[
\RT^0_{\!N}(\cT_h) = \{ y_h\in \RT^0(\cT_h): y_h \cdot n = 0 \text{ on $\GN$}\},
\]
where $n$ denotes the outer unit normal on $\p\O$. A basis of the
space $\RT^0(\cT_h)$ is given by vector fields $\psi_S$, $S\in \cS_h$,
supported on adjacent elements with
\begin{equation}\label{eq:def_rt_basis}
\psi_S(x) = \pm \frac{|S|}{d! |T_\pm|} (z_{S,T_\pm} - x)
\end{equation}
for $x\in T_\pm$ with opposite vertex $z_{S,T_\pm}$ to $S\subset \p T_\pm$.
We have that $\psi_S|_{S'} \cdot n_{S'}=0$
for all sides $S'\neq S$ with unit normal vector $n_{S'}$. If $n_S$ is the 
unit normal vector on $S$ and points from $T_-$ into $T_+$ then we have 
$\psi_S|_S \cdot n_S =1$. A quasi-interpolation operator is for vector fields
$z\in W^{1,1}(\O;\R^d)$ given by 
\[
\cI_\RT z = \sum_{S\in\cS_h} z_S \psi_S, \quad 
z_S = |S|^{-1} \int_S z \cdot n_S \dv{s}.
\]
The operator $\cI_\RT$ is bounded on $C^0(\overline{\O};\R^d)$ and we have 
\[
\|z-\cI_\RT z \|_{L^p(\O)} \le c_\RT h \|\nabla z\|_{L^p(\O)}
\]
and $\diver \cI_\RT z = \Pi_{h,0} \diver z$ for all $z\in W^{1,p}(\O;\R^d)$. 
The latter property implies that the divergence operator defines a surjection
from $\RT^0_{\!N}(\cT_h)$ into $\cL^0(\cT_h)$, provided that constants
are eliminated from $\cL^0(\cT_h)$ if $\GD = \emptyset$. 

\subsection{Orthogonality relations}
An elementwise integration by parts implies that for $v_h\in \cS^{1,\CR}(\cT_h)$
and $y_h\in \RT^0(\cT_h)$ we have the integration-by-parts formula
\begin{equation}\label{eq:int_parts_rt_cr}
\int_\O \nabla_h v_h \cdot y_h \dv{x} + \int_\O v_h \diver y_h \dv{x} 
= \int_{\p\O}  v_h \, y_h \cdot n \dv{s}.
\end{equation}
Here we used that $y_h$ has continuous constant normal components on inner element
sides and that jumps of $v_h$ have vanishing integral mean. If
an elementwise constant vector field $w_h\in \cL^0(\cT_h)^d$ satisfies
\[
\int_\O w_h \cdot \nablah v_h \dv{x} = 0
\]
for all $v_h\in \cS^{1,\CR}_D(\cT_h)$ then its normal components are 
continuous on inner element sides and vanish on the $\GN$, so that 
it belongs to $\RT^0_{\!N}(\cT_h)$.  The following elementary identity
is used repeatedly.

\begin{lemma}[Exchange of projections]\label{la:exchange_projections}
For $z\in W^p_N(\diver;\O) \cap W^{1,1}(\O;\R^d)$ and $u\in W^{1,p}_D(\O)$
and their interpolants $\tz= \cI_\RT z \in \RT^0_{\!N}(\cT_h)$ and
$\tu_h =\cI_\CR u\in \cS^{1,\CR}_D(\cT_h)$ we have
\[
\int_\O \diver z (u - \Pi_{h,0} \tu_h) \dv{x}
+ \int_\O \nabla u \cdot (z - \Pi_{h,0} \tz_h) \dv{x} = 0.
\]
\end{lemma}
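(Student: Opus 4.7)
The plan is to combine the continuous integration-by-parts identity for $(u,z)$ with the discrete counterpart~\eqref{eq:int_parts_rt_cr} for $(\tu_h,\tz_h)$, glued together by the commuting-diagram properties $\nablah \cI_\CR u = \Pi_{h,0} \nabla u$ and $\diver \cI_\RT z = \Pi_{h,0} \diver z$ listed in the preliminaries. First I would expand the target identity into the four terms
\[
\int_\O \diver z \cdot u \dv{x} - \int_\O \diver z \cdot \Pi_{h,0}\tu_h \dv{x} + \int_\O \nabla u \cdot z \dv{x} - \int_\O \nabla u \cdot \Pi_{h,0}\tz_h \dv{x}
\]
and observe that the first and third terms cancel by continuous integration by parts on $\O$; the boundary integral $\int_{\p\O} u\,z\cdot n\dv{s}$ vanishes because $u|_{\GD}=0$ and $z\cdot n|_{\GN}=0$.

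It then remains to show that the second and fourth terms sum to zero. Since $\Pi_{h,0}\tu_h$ and $\Pi_{h,0}\tz_h$ are elementwise constant, the $L^2$-orthogonality of the projection $\Pi_{h,0}$ lets me replace $\diver z$ and $\nabla u$ in these two integrals by their projections onto $\cL^0(\cT_h)$ and $\cL^0(\cT_h)^d$. Invoking the commuting-diagram identities $\Pi_{h,0}\diver z = \diver \tz_h$ and $\Pi_{h,0}\nabla u = \nablah \tu_h$, and subsequently dropping the projections from the now piecewise constant factors, rewrites the two terms as $\int_\O \diver \tz_h \cdot \tu_h\dv{x}$ and $\int_\O \nablah \tu_h \cdot \tz_h\dv{x}$, respectively. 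Their sum vanishes by the discrete integration-by-parts formula~\eqref{eq:int_parts_rt_cr} applied to the pair $(\tu_h,\tz_h)\in \cS^{1,\CR}_D(\cT_h)\times \RT^0_{\!N}(\cT_h)$, which closes the argument.

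The calculation is essentially algebraic, so I do not expect a real obstacle; the only point that merits care is the vanishing of the boundary term in the discrete integration by parts. On every boundary side this follows from the fact that $\tz_h\cdot n$ is constant on each $S\in\cS_h$: for $S\subset\GN$ it is zero by the definition of $\RT^0_{\!N}(\cT_h)$, and for $S\subset\GD$ the factor $\int_S \tu_h\dv{s}$ vanishes because $\tu_h\in\cS^{1,\CR}_D(\cT_h)$ takes value zero at the midpoint of $S$ and is affine on $S$.
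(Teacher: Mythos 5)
Your proof is correct and follows essentially the same route as the paper's: both rest on the continuous integration by parts for $(u,z)$, the discrete formula~\eqref{eq:int_parts_rt_cr} for $(\tu_h,\tz_h)$, the commuting identities $\diver\cI_\RT z=\Pi_{h,0}\diver z$ and $\nablah\cI_\CR u=\Pi_{h,0}\nabla u$, and the self-adjointness of $\Pi_{h,0}$. The only difference is presentational — you cancel two pairs of terms where the paper writes a single chain of equalities — and your extra care with the boundary terms is a welcome addition, not a deviation.
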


\begin{proof}
Since $\diver \tz_h = \Pi_{h,0} \diver z$ and $\nablah \tu_h = \Pi_{h,0} \nabla u$, 
we verify that
\[\begin{split}
 \int_\O \diver z (u - \Pi_{h,0} \tu_h) \dv{x} 
&= - \int_\O z \cdot \nabla u + \diver \tz_h \tu_h \dv{x} \\
&= - \int_\O \nabla u \cdot (z - \Pi_{h,0} \tz_h) \dv{x},
\end{split}\]
which proves the asserted equality.
\end{proof}

\subsection{Convex conjugates}
Given a proper, convex, and lower semicontinuous functional $\phi:\R^d \to \Rinf$ 
the convex conjugate $\phi^*:\R^d \to \Rinf$ is defined via
\[
\phi^*(t) = \sup_{s\in\R^d} t\cdot s - \phi(s).
\]
The function $\phi^*$ is proper, convex, and lower semicontinuous and we have
the relations 
\[
\phi^{**} = \phi, \quad s = D \phi^*\big(D \phi(s)\big),
\]
where the second identy can be generalized to subdifferentials. We refer
the reader to~\cite{Rock70-book} for details and note the Fenchel--Young
inequality which states that for $s,t\in \R^d$ we have 
\[
t\cdot s \le \phi(s) + \phi^*(t)
\]
with equality if and only if $t= D\phi(s)$. 
Certain duality relations can be transferred to discretizations of variational
problems. We provide a modified version and a different proof of an 
important formula identified in~\cite{ChaPoc19-pre}.

\begin{proposition}[Discrete duality]\label{prop:discrete_duality}
Given $\ou_h\in \Pi_{h,0} \cS^{1,\CR}_D(\cT_h) \subset \cL^0(\cT_h)$ we have 
\[\begin{split}
\inf & \Big\{ \int_\O \phi\big(\nablah u_h\big) \dv{x} :  
u_h \in \cS^{1,\CR}_D(\cT_h), \, \Pi_{h,0} u_h = \ou_h \Big\} \\
& \ge
\sup \Big\{ - \int_\O \phi^* \big(\Pi_{h,0} z_h\big) + \ou_h \diver z_h \dv{x}:
z_h \in \RT^0_{\!N}(\cT_h) \Big\}.
\end{split}\]
If $\phi \in C^1(\R^d)$ then equality holds. 
\end{proposition}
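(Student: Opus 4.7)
\medskip

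\noindent\textbf{Plan.} My proof splits into two parts: weak duality (the inequality) for general $\phi$, and strong duality (equality) under the $C^1$ assumption via an explicit dual reconstruction.

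\medskip

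\noindent\emph{Weak duality.} Let $u_h\in\cS^{1,\CR}_D(\cT_h)$ with $\Pi_{h,0}u_h=\ou_h$ and $z_h\in\RT^0_{\!N}(\cT_h)$ be arbitrary. The Fenchel--Young inequality applied pointwise gives
\[
\phi(\nablah u_h)+\phi^*(\Pi_{h,0}z_h)\ge \nablah u_h\cdot \Pi_{h,0}z_h.
\]
Since $\nablah u_h$ is elementwise constant, the right-hand side integrates to $\int_\O\nablah u_h\cdot z_h\dv{x}$. The integration-by-parts formula \eqref{eq:int_parts_rt_cr} and the boundary conditions (vanishing normal components of $z_h$ on $\GN$, vanishing sidewise mean of $u_h$ on $\GD$) eliminate the boundary term, and $\diver z_h\in\cL^0(\cT_h)$ permits replacing $u_h$ by $\Pi_{h,0}u_h=\ou_h$ in the volume term. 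Hence
\[
\int_\O\phi(\nablah u_h)\dv{x}\ge -\int_\O \phi^*(\Pi_{h,0}z_h)+\ou_h\,\diver z_h\dv{x},
\]
which yields the inequality after taking the infimum and supremum.

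\medskip

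\noindent\emph{Strong duality for $\phi\in C^1(\R^d)$.} Assume a minimizer $u_h^*\in\cS^{1,\CR}_D(\cT_h)$ of the primal problem subject to $\Pi_{h,0}u_h=\ou_h$ exists (which follows under standard coercivity assumptions on $\phi$; finite cases otherwise). By the Lagrange multiplier rule there exists $\mu_h\in\cL^0(\cT_h)$ with
\[
\int_\O D\phi(\nablah u_h^*)\cdot\nablah v_h\dv{x}=\int_\O \mu_h\,v_h\dv{x}
\quad\text{for all }v_h\in\cS^{1,\CR}_D(\cT_h).
\]
Motivated by the reconstruction from the introduction, I set
\[
\tz_h|_T(x)=D\phi(\nablah u_h^*|_T)-\frac{\mu_h|_T}{d}(x-x_T),\qquad T\in\cT_h.
\]
This is elementwise a Raviart--Thomas function with $\diver\tz_h|_T=-\mu_h|_T$ and, since $x_T$ is the barycenter, $\Pi_{h,0}\tz_h=D\phi(\nablah u_h^*)$.

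\medskip

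\noindent\emph{Membership in $\RT^0_{\!N}(\cT_h)$.} This is the step I expect to require most care. Testing elementwise integration by parts for $\tz_h$ against any $v_h\in\cS^{1,\CR}_D(\cT_h)$ and using $\int_T(x-x_T)\dv{x}=0$, the volume contribution reduces to $\int_\O\mu_h v_h\dv{x}$, which matches the Euler--Lagrange identity above. Consequently the sum of the side contributions
\[
\sum_{T\in\cT_h}\int_{\p T} v_h\,\tz_h\cdot n\dv{s}=0
\]
vanishes for all such $v_h$. Because normal components of $\tz_h$ are constant on each side and averages of $v_h|_S$ agree from both sides (CR continuity at midpoints), choosing test functions supported at a single side-midpoint forces continuity of $\tz_h\cdot n$ across every interior side and $\tz_h\cdot n=0$ on $\GN$. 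Thus $\tz_h\in\RT^0_{\!N}(\cT_h)$.

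\medskip

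\noindent\emph{Attainment in the dual.} With this $\tz_h$, the Fenchel identity $\phi^*(D\phi(s))=D\phi(s)\cdot s-\phi(s)$ combined with the Euler--Lagrange equation evaluated at $v_h=u_h^*$ (which gives $\int_\O D\phi(\nablah u_h^*)\cdot\nablah u_h^*\dv{x}=\int_\O\mu_h\ou_h\dv{x}$) reduces
\[
-\int_\O \phi^*(\Pi_{h,0}\tz_h)+\ou_h\diver\tz_h\dv{x}
\]
to exactly $\int_\O\phi(\nablah u_h^*)\dv{x}$, closing the duality gap. The main obstacle is the verification that the elementwise reconstruction $\tz_h$ assembles into a globally admissible Raviart--Thomas field; this is where the nonconforming Crouzeix--Raviart test space interacts precisely with the discrete flux space through the integration-by-parts formula \eqref{eq:int_parts_rt_cr}.
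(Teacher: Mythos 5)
Your proof is correct and follows essentially the same route as the paper's: weak duality via the Fenchel--Young inequality combined with the discrete integration-by-parts formula, and strong duality via the Marini-type reconstruction $\tz_h = D\phi(\nablah u_h^*) - (\mu_h/d)(\cdot-x_T)$ built from the constrained Euler--Lagrange equation with Lagrange multiplier $\mu_h$. The only (immaterial) difference is in verifying $\tz_h\in \RT^0_{\!N}(\cT_h)$: you localize the vanishing inter-element boundary sum to individual sides using Crouzeix--Raviart basis functions to force normal-trace continuity, whereas the paper subtracts an arbitrary Raviart--Thomas field with matching divergence and invokes its stated orthogonality characterization of elementwise constant fields.
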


\begin{proof}
We let $L(\ou_h)$ and $R(\ou_h)$ denote the terms on the left- and right-hand side of the 
asserted inequality and show that $R(\ou_h)\le L(\ou_h)$. For this, let
$u_h\in \cS^{1,\CR}_D(\cT_h)$ with $\Pi_{h,0} u_h = \ou_h$. Given any 
$z_h \in \RT^0_{\!N}(\cT_h)$ we then have that 
\[
- \int_\O \phi^* \big(\Pi_{h,0} z_h\big) + \ou_h \diver z_h \dv{x}
= - \int_\O \phi^* \big(\Pi_{h,0} z_h\big) - \nablah u_h \cdot \Pi_{h,0} z_h \dv{x}.
\]
Hence, only the midpoint values of $z_h$ matter and the supremum is larger 
if it is taken over elementwise constant vector fields $p_h \in \cL^0(\cT_h)^d$. 
This corresponds to computing elementwise the values 
$\phi(\nablah u_h) = \phi^{**}(\nablah u_h)$. Since $u_h$ is arbitrary
with $\Pi_{h,0} u_h = \ou_h$ we deduce that $R(\ou_h) \le L(\ou_h)$. If
$\phi \in C^1(\R^d)$ then an optimal $u_h\in \cS^{1,\CR}_D(\cT_h)$ for
$L(\ou_h)$ satisfies 
\[
\int_\O D\phi(\nablah u_h) \cdot \nablah v_h \dv{x} 
+ \int_\O \mu_h \Pi_{h,0} v_h \dv{x} = 0,
\]
for all $v_h\in \cS^{1,\CR}_D(\cT_h)$, where 
$\mu_h\in \Pi_{h,0} \cS^{1,\CR}_D(\cT_h)\subset \cL^0(\cT_h)$ is 
a Lagrange multiplier related to the constraint $\Pi_{h,0} u_h = \ou_h$.
For $T\in \cT_h$ and $x\in T$ we define  
\[ 
z_h(x) = D\phi(\nablah u_h|_T) + \frac{\mu_h|_T}{d} (x-x_T)
\]
and note that $\diver z_h|_T = \mu_h|_T$.
We choose an arbitrary element $\tz_h \in \RT^0_{\!N}(\cT_h)$ with $\diver \tz_h = \mu_h$ 
and verify that the elementwise constant vector field $z_h -\tz_h$ satisfies
\[
\int_\O (z_h -\tz_h) \cdot \nablah v_h \dv{x} 
= \int_\O \big(D\phi(\nabla u_h) - \tz_h\big) \cdot \nablah v_h \dv{x}
= 0
\]
for all $v_h\in \cS^{1,\CR}_D(\cT_h)$, i.e., $z_h -\tz_h\in \RT^0_{\!N}(\cT_h)$ and in particular 
$z_h \in \RT^0_{\!N}(\cT_h)$. The identity $\Pi_{h,0} z_h = D\phi(\nablah u_h)$
implies that
\[
\phi^*(\Pi_{h,0} z_h) = \Pi_{h,0} z_h \cdot \nablah u_h - \phi(\nablah u_h).
\]
An integration over $\O$ and the integration-by-parts formula~\eqref{eq:int_parts_rt_cr}
lead to 
\[
\int_\O \phi(\nablah u_h)\dv{x} = - \int_\O \phi^*(\Pi_{h,0} z_h) + \ou_h \diver z_h \dv{x},
\]
which implies that $L(\ou_h) = R(\ou_h)$. 
\end{proof}
 
\begin{remark}
The condition $\phi \in C^1(\R^d)$ can be avoided provided there exists
a sequence of regularizations $\phi_\veps$ of $\phi$ such that $\phi_\veps$
and $\phi_\veps^*$ converge uniformly to $\phi$ and $\phi^*$ on their 
domains. This applies, e.g., to the truncated regularization 
$\phi_\veps(s) = \min\{|s|-\veps/2,|s|^2/(2\veps)\}$ of the modulus 
for which we have $\phi^*_\veps(t) = I_{K_1(0)}(t) + t^2/(2\veps)$, where
$K_1(0)= \{t\in \R^d: |t|\le 1\}$.
\end{remark}

\section{General results}\label{sec:general}
We consider the minimization of the abstract functional
\[
I(u) = \int_\O \phi(\nabla u) \dv{x} + \int_\O \psi(x,u) \dv{x}
\]
in a Sobolev space $W^{1,p}_D(\O)$ for $1< p<\infty$ and $f\in L^{p'}(\O)$. 
We assume that the convex and measurable integrands 
\[
\phi:\R^d \to \R, \quad \psi :\O\times \R \to \Rinf
\]
are such that $I$ is bounded from below, coercive, not 
identical to $+\infty$, and
weakly lower semicontinuous so that the direct method in the calculus of
variations implies the existence of a solution $u\in W^{1,p}_D(\O)$. 
The dual problem consists in maximizing the functional
\[
D(z) = -\int_\O \phi^*(z) \dv{x} - \int_\O \psi^*(x,\diver z) \dv{x}
\]
in the space $W^{p'}_{\!N}(\O;\diver)$ with $p'=p/(p-1)$ and we assume that
a solution exists. We also assume the strong duality relation
\[
\inf_{u\in W^{1,p}_D(\O)} I(u) = \sup_{z\in W^{p'}_{\!N}(\O;\diver)} D(z)
\]
to hold and refer the reader to, e.g.,~\cite{AtBuMi06-book,Rock70-book}, for conditions 
leading to this equality. We recall that in this case we have the relations
\[
z = D\phi(\nabla u), \quad \diver z = D \psi(u)
\]
for solutions $u$ and $z$, where $D\psi$ stands for the derivative of
$\psi$ with respect to the second argument. The derivatives can be replaced
by subdifferentials. 

\subsection{Discrete duality}
The discrete primal problem is defined by minimizing the functional 
\[
I_h(u_h) = \int_\O \phi(\nablah u_h) \dv{x} + \int_\O \psi_h(x,\Pi_{h,0} u_h) \dv{x}
\]
in the nonconforming finite element space $\cS^{1,\CR}_D(\cT_h)$ with
suitable convex approximations $\psi_h$ of $\psi$ that are elementwise constant
with respect to the first argument.
The corresponding discrete dual problem consists in maximizing the
functional
\[
D_h(z_h) = -\int_\O \phi^*(\Pi_{h,0} z_h) \dv{x} 
- \int_\O \psi_h^* (x,\diver z_h) \dv{x}
\]
in the set $\RT^0_{\!N}(\cT_h)$.

\begin{proposition}[Duality relations]\label{prop:discrete_dual_strong}
The discrete primal and dual problems satisfy the
duality relation 
\[
\inf_{u_h\in \cS^{1,\CR}_D(\cT_h)} I_h(u_h) 
\ge \sup_{z_h\in \RT^0_{\!N}(\cT_h)} D_h(z_h).
\]
If $\phi$ and $\psi$ are differentiable 
then solutions $u_h$ and $z_h$ are related via
\[
z_h(x) = D \phi\big(\nabla_h u_h|_T\big) + d^{-1}  D \psi_h \big(x,u_h(x_T)\big) (x-x_T)
\]
for every $T\in\cT_h$ and $x\in T$. The pair 
$(z_h,\ou_h) \in \RT^0_{\!N}(\cT_h)\times \cL^0(\cT_h)$
with $\ou_h|_T = u_h(x_T)$ for all $T\in \cT_h$
solves the corresponding saddle-point problem
\[\begin{split}
\big(D\phi^*(\Pi_{h,0} z_h),\Pi_{h,0} y_h\big) + (\ou_h, \diver y_h) &= 0, \\
(\diver z_h, \ov_h)  - \big(D \psi_h(\ou_h),\ov_h\big)&= 0,
\end{split}\]
for all $(y_h,v_h) \in \RT^0_{\!N}(\cT_h)\times \cL^0(\cT_h)$. Moreover,
in this case strong duality applies, i.e., $I_h(u_h) = D_h(z_h)$. 
\end{proposition}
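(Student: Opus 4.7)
The plan is to mimic the strategy used in Proposition~\ref{prop:discrete_duality} and the reconstruction discussion from the introduction, but now with an extra $\psi$ summand whose Legendre dual pairs $\Pi_{h,0} u_h$ against $\diver z_h$. The key observation driving everything is that $\nablah v_h$ is elementwise constant and $\diver y_h$ is elementwise constant, so all projection operators and averages interact nicely with the integration-by-parts identity~\eqref{eq:int_parts_rt_cr}.

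For the weak inequality, I would fix arbitrary $u_h\in \cS^{1,\CR}_D(\cT_h)$ and $z_h\in \RT^0_{\!N}(\cT_h)$ and apply the Fenchel--Young inequality twice:
\[
\phi(\nablah u_h) + \phi^*(\Pi_{h,0} z_h) \ge \nablah u_h \cdot \Pi_{h,0} z_h,
\quad
\psi_h(x,\Pi_{h,0} u_h) + \psi_h^*(x,\diver z_h) \ge \Pi_{h,0} u_h \,\diver z_h.
\]
Because $\nablah u_h$ and $\diver z_h$ are elementwise constant, the projection operators on the right can be dropped, and~\eqref{eq:int_parts_rt_cr} shows that $\int_\O \nablah u_h \cdot z_h + u_h \diver z_h \dv{x} = 0$: on Neumann sides $z_h\cdot n=0$, while on Dirichlet sides $u_h$ has vanishing integral mean and $z_h \cdot n$ is constant along each side. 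Summing the two Fenchel--Young inequalities and integrating yields $I_h(u_h) \ge D_h(z_h)$, which is weak duality.

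For the differentiable case I would start from the Euler--Lagrange equation for a minimizer $u_h$ of $I_h$, namely
\[
\big(D\phi(\nablah u_h),\nablah v_h\big) + \big(D\psi_h(\cdot,\ou_h),\Pi_{h,0} v_h\big) = 0
\quad \text{for all } v_h\in \cS^{1,\CR}_D(\cT_h),
\]
with $\ou_h = \Pi_{h,0} u_h$ (which equals $u_h(x_T)$ elementwise). I then define $\tz_h$ on each $T$ by the reconstruction formula and observe two things immediately: (i) $\diver \tz_h|_T = D\psi_h(\ou_h|_T)$, and (ii) $\Pi_{h,0} \tz_h = D\phi(\nablah u_h)$, since the affine part $d^{-1} D\psi_h(\ou_h)(x-x_T)$ has vanishing mean over $T$. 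To certify $\tz_h \in \RT^0_{\!N}(\cT_h)$ I pick any $\zeta_h\in\RT^0_{\!N}(\cT_h)$ with $\diver \zeta_h = D\psi_h(\ou_h)$, available from the surjectivity of the divergence on $\RT^0_{\!N}(\cT_h)$ onto $\cL^0(\cT_h)$. Then $\tz_h - \zeta_h$ is elementwise constant (its linear pieces have matching divergence coefficients), and testing against any $v_h$ gives
\[
\big(\tz_h - \zeta_h, \nablah v_h\big) = \big(D\phi(\nablah u_h),\nablah v_h\big) + \big(D\psi_h(\ou_h),\Pi_{h,0} v_h\big) = 0
\]
after integrating by parts on $\zeta_h$; by the orthogonality criterion from Section~\ref{sec:fem_prelim}, $\tz_h-\zeta_h\in \RT^0_{\!N}(\cT_h)$ and hence $\tz_h\in \RT^0_{\!N}(\cT_h)$.

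With the identifications $D\phi^*(\Pi_{h,0}\tz_h) = \nablah u_h$ and $\diver \tz_h = D\psi_h(\ou_h)$, the two saddle-point equations reduce to the integration-by-parts identity and a tautology, respectively. Finally, equality in the Fenchel--Young inequalities used in the weak-duality step is exactly what these matching relations express, so integrating $\phi(\nablah u_h)+\phi^*(\Pi_{h,0} \tz_h) = \nablah u_h \cdot \tz_h$ and $\psi_h(\ou_h)+\psi_h^*(\diver \tz_h) = \ou_h\,\diver \tz_h$ and again using~\eqref{eq:int_parts_rt_cr} yields $I_h(u_h) = D_h(\tz_h)$, giving strong duality and identifying $\tz_h$ as the dual maximizer. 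The main obstacle I anticipate is the verification that $\tz_h$ has the required global conformity $\tz_h\in \RT^0_{\!N}(\cT_h)$; this is the step that genuinely uses the Euler--Lagrange equation and the orthogonality characterization of $\RT^0_{\!N}(\cT_h)$, everything else is bookkeeping with Fenchel--Young and elementwise-constant arguments.
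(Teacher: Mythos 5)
Your argument is correct, and the reconstruction/strong-duality half is essentially the paper's own proof (with the roles of the names $z_h$ and $\tz_h$ swapped): define the candidate flux by the reconstruction formula, note $\Pi_{h,0}$ of it equals $D\phi(\nablah u_h)$ and its elementwise divergence equals $D\psi_h(\ou_h)$, compare with an auxiliary field of prescribed divergence, and use the Euler--Lagrange equation together with the orthogonality characterization of $\RT^0_{\!N}(\cT_h)$ to certify conformity; the Fenchel equalities then give $I_h(u_h)=D_h(z_h)$ and the saddle-point system. Where you genuinely diverge is the weak duality inequality. The paper derives it from Proposition~\ref{prop:discrete_duality} by splitting the infimum over $u_h$ into an infimum over the averages $\ou_h$ and an inner constrained infimum, relaxing $\Pi_{h,0}\cS^{1,\CR}_D(\cT_h)$ to $\cL^0(\cT_h)$, exchanging $\inf$ and $\sup$, and conjugating $\psi_h$ to absorb the remaining infimum. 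You instead apply Fenchel--Young pointwise to both the $\phi$- and $\psi_h$-pairs, drop the projections using that $\nablah u_h$ and $\diver z_h$ are elementwise constant, and conclude with~\eqref{eq:int_parts_rt_cr}. Your route is more elementary and self-contained for this proposition; the paper's route reuses Proposition~\ref{prop:discrete_duality}, which it needs anyway for the nonsmooth examples (e.g.\ total variation) where the direct differentiable-case bookkeeping is not available. Both are valid, and all the small verifications you flag (vanishing boundary terms on $\GD$ via the zero side-averages of $u_h$, existence of $\zeta_h$ from the surjectivity of the divergence, the affine part of the reconstruction having zero elementwise mean) are exactly the points the paper relies on as well.
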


\begin{proof}
We use the duality formula of Proposition~\ref{prop:discrete_duality} and 
exchange extrema, to verify that, indicating by $u_h,z_h$ arbitrary
functions from the spaces $\cS^{1,\CR}_D(\cT_h)$ and $\RT^0_{\!N}(\cT_h)$,
and abbrevating $\cP_h = \Pi_{h,0} \cS^{1,\CR}_D(\cT_h) \subset \cL^0(\cT_h)$, 
\[\begin{split}
&\inf_{u_h}  I_h(u_h) 
= \inf_{\ou_h\in \cP_h} \inf_{u_h:\, \Pi_{h,0} u_h = \ou_h} \int_\O \phi(\nablah u_h) \dv{x} + \int_\O \psi_h(\Pi_{h,0} u_h) \dv{x} \\
&\quad \ge \inf_{\ou_h \in \cP_h} \sup_{z_h} - \int_\O \phi^* \big(\Pi_{h,0} z_h\big) + \ou_h \diver z_h \dv{x} + \int_\O \psi_h(\ou_h) \dv{x} \\
&\quad\ge \inf_{\ou_h \in \cL^0(\cT_h)} \sup_{z_h} - \int_\O \phi^* \big(\Pi_{h,0} z_h\big) +  \ou_h \diver z_h \dv{x} + \int_\O \psi_h(\ou_h) \dv{x} \\
&\quad\ge \sup_{z_h} \inf_{\ou_h \in \cL^0(\cT_h)} - \int_\O \phi^* \big(\Pi_{h,0} z_h\big) + \ou_h \diver z_h \dv{x} + \int_\O \psi_h(\ou_h) \dv{x}.
\end{split}\]
The infimum is eliminated by using the convex conjugate of $\psi_h$, i.e., 
by noting that  
\[
\psi_h^*(x,t) = \sup_{s\in \R} t \, s - \psi_h(x,s) = - \inf_{s\in \R} -t \, s + \psi_h(x,s), 
\]
we find that 
\[
\inf_{\ou_h\in \cL^0(\cT_h)} - \int_\O \ou_h \diver z_h \dv{x} + \int_\O \psi_h(\ou_h) \dv{x}
= - \int_\O \psi_h^*(\diver z_h) \dv{x}.
\]
This implies that we have 
\[
\inf_{u_h\in \cS^{1,\CR}_D(\cT_h)}  I_h(u_h)  \ge   \sup_{z_h \in \RT^0_{\!N}(\cT_h)}  D_h(z_h).
\]
Assume that $\phi$ and $\psi$ are differentiable, let 
$u_h$ be a solution of the primal problem, and let $z_h$ be defined
as in the proposition. Furthermore, let $\tz_h\in \RT^0_{\!N}(\cT_h)$ be such that
$\diver \tz_h = D \psi_h (\Pi_{h,0} u_h)$.  Since 
$\diver z_h|_T = D \psi_h (u_h(x_T))$ for all $T\in \cT_h$ it follows
that $z_h - \tz_h \in \cL^0(\cT_h)^d$. Using the discrete Euler--Lagrange equations
\[
\int_\O D \phi(\nablah u_h) \cdot \nablah v_h \dv{x} + \int_\O D\psi_h(\Pi_{h,0} u_h) v_h \dv{x} = 0
\]
for all $v_h\in \cS^{1,\CR}_D(\cT_h)$, we find that
\[
\int_\O (z_h-\tz_h) \cdot \nablah v_h \dv{x} 
= \int_\O D \phi(\nablah u_h) \cdot \nablah v_h \dv{x} + \int_\O \diver \tz_h v_h \dv{x} = 0.
\]
Hence $z_h-\tz_h \in \RT^0_{\!N}(\cT_h)$ and in particular $z_h\in \RT^0_{\!N}(\cT_h)$.
Since $\Pi_{h,0} z_h = D\phi(\nablah u_h)$ and $\diver z_h = D\psi_h(\Pi_{h,0}u_h)$ we verify that  
\[\begin{split}
\phi^*(\Pi_{h,0} z_h ) &= \Pi_{h,0} z_h \cdot \nablah u_h - \phi(\nablah u_h), \\
\psi_h^*(\diver z_h) & = \diver z_h  \Pi_{h,0} u_h - \psi_h(\Pi_{h,0} u_h). 
\end{split} \]
Adding these identities and incorporating the integration-by-parts 
formula~\eqref{eq:int_parts_rt_cr} implies that
\[
- \int_\O \phi^*(\Pi_{h,0} z_h) \dv{x} - \int_\O \psi_h^*(\diver z_h) \dv{x}
= \int_\O \phi(\nablah u_h) \dv{x} + \int_\O \psi_h ( \Pi_{h,0} u_h) \dv{x},
\]
i.e., that $I_h(u_h) = D_h(z_h)$. Noting that $D\phi^*(\Pi_{h,0} z_h) = \nablah u_h$
we find that the pair $(z_h,\ou_h)$ solves the saddle-point problem. 
\end{proof}

\begin{remark}
In general, the inclusion $\Pi_{h,0} \cS^{1,\CR}_D(\cT_h) \subset \cL^0(\cT_h)$
is strict, e.g., for $\cT_h = \{T\}$, $\overline{\O} = T$, and $\GD = \p\O$. The implicit 
treatment of Dirichlet boundary conditions in the dual formulation implies that 
strong duality still applies. 
\end{remark}

\subsection{$\G$-convergence of $I_h$}
A general justification of the discrete problems $I_h$ as correct 
discretizations of the functional $I$ is established via a $\G$-convergence
result. For this, we extend the functionals $I$ and $I_h$ to functionals 
$\tI$ and $\tI_h$ on $L^p(\O)$ by formally assigning the value $+\infty$ 
to arguments not belonging to $W^{1,p}_D(\O)$ and $\cS^{1,\CR}_D(\cT_h)$,
respectively. 

\begin{proposition}[$\G$-convergence]
Assume that $\phi:\R^d\to \R$ satisfies 
\[
\big|\phi(s)-\phi(r)\big|\le c_5 \big( 1+ |r| + |s|\big)^{p-1} |r-s|
\]
for all $s,r\in \R^d$ and that $\psi_h:\O\times \R \to \Rinf$ and 
$\psi: \O\times \R \to \Rinf$ are related via 
\[
\psi_h(\cdot,v_h) \to \psi(\cdot,v)
\]
in $L^1(\O)$ as $h\to 0$ whenever $\psi_h(\cdot,v_h) \in L^1(\O)$ for all $h>0$ and
$v_h \to v$ in $L^p(\O)$.
Then, the extended functionals $\tI_h:L^p(\O)\to \Rinf$ are $\G$-convergent
to $\tI:L^p(\O)\to \Rinf$ as $h\to 0$ with respect to strong convergence 
in $L^p(\O)$.
\end{proposition}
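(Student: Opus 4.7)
The plan is to establish the two standard conditions of $\Gamma$-convergence with respect to strong convergence in $L^p(\O)$: (i) for every $v\in L^p(\O)$ and every sequence $v_h\to v$ in $L^p(\O)$, the liminf inequality $\tI(v)\le \liminf_{h\to 0} \tI_h(v_h)$ holds; (ii) there exists a recovery sequence $v_h\to v$ in $L^p(\O)$ with $\limsup_{h\to 0} \tI_h(v_h)\le \tI(v)$.

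\textbf{Recovery sequence.} The case $\tI(v)=+\infty$ is trivial, so I assume $v\in W^{1,p}_D(\O)$ and take $v_h=\cI_\CR v\in \cS^{1,\CR}_D(\cT_h)$. The interpolation estimates immediately give $v_h\to v$ in $L^p(\O)$, and the identity $\nablah \cI_\CR v=\Pi_{h,0}\nabla v$ together with strong $L^p$-convergence of the $L^2$-projection of $\nabla v$ yields $\nablah v_h\to \nabla v$ in $L^p(\O;\R^d)$. The assumed Lipschitz-type growth of $\phi$ combined with H\"older's inequality then produces $\int_\O \phi(\nablah v_h)\dv{x}\to \int_\O \phi(\nabla v)\dv{x}$. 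Since $\|\Pi_{h,0} v_h-v_h\|_{L^p(\O)}\to 0$, one has $\Pi_{h,0} v_h\to v$ in $L^p(\O)$, so the hypothesis on $\psi_h$ delivers $\int_\O \psi_h(\cdot,\Pi_{h,0} v_h)\dv{x}\to \int_\O \psi(\cdot,v)\dv{x}$, and adding the two limits yields $\tI_h(v_h)\to \tI(v)$.

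\textbf{Liminf inequality.} Suppose $v_h\to v$ in $L^p(\O)$; if $\liminf_{h\to 0}\tI_h(v_h)=+\infty$ nothing is to prove, so after passing to a subsequence I may assume $v_h\in \cS^{1,\CR}_D(\cT_h)$ and $\tI_h(v_h)\le C$. The first step is to derive a uniform bound $\|\nablah v_h\|_{L^p(\O)}\le C$ from the coercivity of $I$, absorbing any linear perturbation stemming from the $\psi$ term via Young's inequality and a discrete Poincar\'e estimate on $\cS^{1,\CR}_D(\cT_h)$. The enriching operator then provides $E_h^\CR v_h\in W^{1,p}_D(\O)$ with $\|E_h^\CR v_h-v_h\|_{L^p(\O)}\to 0$ and uniformly bounded $W^{1,p}$-norm; passing to a weakly convergent subsequence identifies the limit with $v$, so $v\in W^{1,p}_D(\O)$. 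Testing $\nablah v_h$ against $\varphi\in C_c^\infty(\O;\R^d)$ and using that the jumps of Crouzeix--Raviart functions have vanishing mean on element sides gives $\nablah v_h\wto \nabla v$ in $L^p(\O;\R^d)$. Weak lower semicontinuity of the convex integral $g\mapsto \int_\O \phi(g)\dv{x}$ then yields $\int_\O \phi(\nabla v)\dv{x}\le \liminf \int_\O \phi(\nablah v_h)\dv{x}$. The potential term is handled exactly as in the recovery step: $\Pi_{h,0} v_h\to v$ in $L^p(\O)$, and finiteness of $\tI_h(v_h)$ forces $\psi_h(\cdot,\Pi_{h,0} v_h)\in L^1(\O)$, so the convergence hypothesis applies and the $\psi$-integrals converge. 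Summing yields $\tI(v)\le \liminf \tI_h(v_h)$.

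\textbf{Main obstacle.} The delicate ingredient is the uniform bound on $\|\nablah v_h\|_{L^p(\O)}$: coercivity of $I$ is postulated at the continuous level on $W^{1,p}_D(\O)$, and transferring it to the nonconforming setting requires a careful interplay of convexity of $\phi$, a lower bound on $\psi_h$ uniform in $h$, and discrete Poincar\'e-type inequalities on $\cS^{1,\CR}_D(\cT_h)$; the boundedness of the enriching operator $E_h^\CR$ is precisely what makes those discrete Poincar\'e estimates available. A secondary subtlety is the possibility that $\psi$ takes the value $+\infty$: the phrasing of the hypothesis on $\psi_h$ sidesteps this by requiring convergence only when $\psi_h(\cdot,v_h)\in L^1(\O)$, and finiteness of $\tI_h(v_h)$ guarantees that the hypothesis can be applied where it is needed.
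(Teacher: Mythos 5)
Your argument is correct and follows essentially the same route as the paper: the liminf inequality via coercivity, the enriching operator $E_h^\CR$, weak convergence of $\nablah v_h$, convexity of $\phi$, and the hypothesis on $\psi_h$; the recovery sequence via the Crouzeix--Raviart interpolant together with the Lipschitz-type bound on $\phi$. The only (harmless) deviation is that you interpolate $v$ directly and use $\nablah\cI_\CR v=\Pi_{h,0}\nabla v$ with strong $L^p$-convergence of the piecewise-constant projection, whereas the paper first regularizes $v$ before interpolating; both yield the required strong convergence of the discrete gradients.
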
 

\begin{proof}
Let $(u_h)_{h>0}$ be such that $u_h\in \cS^{1,\CR}_D(\cT_h)$
and $\liminf_{h\to 0} \tI_h(u_h) < \infty$. The assumed coercivity property
implies that for a subsequence we have $\|\nablah u_h\|_{L^p(\O)} \le c$. Incorporating the
enriching operator $E_h^\CR$ shows that there exists $u\in W^{1,p}_D(\O)$ such
that $u_h \to u$ in $L^p(\O)$ and $\nablah u_h \wto \nabla u$ for an
appropriate subsequence as $h\to 0$. 
Convexity of $\phi$ and the assumption on $\psi$ then imply that
\[
\liminf_{h\to 0} I_h(u_h) \le I(u).
\]
Given $u\in W^{1,p}_D(\O)$ we use regularizations of $u$ and the interpolation
operator $\cI_\CR$ to construct a seqence $(u_h)_{h>0}$ with 
$u_h \in \cS^{1,\CR}_D(\cT_h)$ such that 
\[
\|u - u_h\|_{L^p(\O)} + \|\nablah u_h - \nabla u \|_{L^p(\O)} \to 0,
\]
as $h\to 0$. Noting that $\Pi_{h,0} u_h \to u$ in $L^p(\O)$ and using 
the assumed local Lipschitz continuity of $\phi$ 
and the approximability condition on $\psi_h$ and $\psi$ we deduce that
\[\begin{split}
|I_h(u_h) - I(u)| 
& \le c_5 \int_\O (1+|\nabla u| + |\nablah u_h|\big)^{p-1} |\nabla u -\nablah u_h| \dv{x} \\
&\qquad +  \|\psi(u)-\psi_h(\Pi_{h,0} u_h) \|_{L^1(\O)}.
\end{split}\]
This implies that $I_h(u_h) \to I(u)$ as $h\to 0$.
\end{proof}

\subsection{Error estimate}
We next derive an abstract error estimate for the approximation of $I$ 
with the nonconforming discretization $I_h$. We assume that the functionals
$I_h$ provide a uniform strong coercivity property, i.e., with the variational
derivative $\d I_h$, that 
\[
I_h(v_h) + \d I_h (v_h)[w_h-v_h] + \s_{I_h}^2(v_h,w_h) \le I_h(w_h)
\]
for all $v_h,w_h \in \cS^{1,\CR}_D(\cT_h)$ with a definite functional 
$\s_{I_h}$. This implies that minimizers for $I_h$ are unique. 

\begin{theorem}[Discretization error]\label{thm:error_abstract}
For minimizers $u\in W^{1,p}_D(\O)$ and $u_h \in \cS^{1,\CR}_D(\cT_h)$ 
of the functionals $I$ and $I_h$ and a dual solution 
$z\in W^1_N(\diver;\O) \cap W^{1,1}(\O;\R^d)$ we have that
\[\begin{split}
\s_{I_h}^2(u_h,\cI_\CR u)  
&\le \int_\O \big(D\phi^*(z)- D\phi^*(\Pi_{h,0} \cI_\RT z )\big) \cdot (z- \Pi_{h,0} \cI_\RT z) \dv{x} \\
& \quad  + \int_\O \big(D\psi(u)- D\psi_h (\Pi_{h,0} \cI_\CR u)\big) \cdot  (u-\Pi_{h,0} \cI_\CR u) \dv{x} \\
& \quad + \int_\O \psi_h(u)- \psi(u) \dv{x} + \int_\O \psi_h^*(\Pi_{h,0} \diver z) - \psi^*(\diver z) \dv{x}.
\end{split}\]
\end{theorem}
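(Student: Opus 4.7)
The plan is to run systematically the duality-based argument sketched in the introduction, now carried out for both the gradient coupling $\phi/\phi^*$ and the lower-order coupling $\psi/\psi^*$. The starting point is the uniform strong coercivity of $I_h$: taking $v_h=u_h$ with $\d I_h(u_h) = 0$ at the minimizer and $w_h = \cI_\CR u$ yields
\[
\s_{I_h}^2(u_h,\cI_\CR u) \le I_h(\cI_\CR u) - I_h(u_h),
\]
and weak discrete duality from Proposition~\ref{prop:discrete_dual_strong} applied with the test element $\cI_\RT z\in \RT^0_{\!N}(\cT_h)$ gives $I_h(u_h) \ge D_h(\cI_\RT z)$. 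It suffices therefore to bound $I_h(\cI_\CR u) - D_h(\cI_\RT z)$ by the four terms of the theorem.

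Write $\tu_h = \cI_\CR u$ and $\tz_h = \cI_\RT z$. The boundary term in~\eqref{eq:int_parts_rt_cr} applied to $(\tu_h,\tz_h)$ vanishes since $\tu_h$ has vanishing mean on each Dirichlet side and $\tz_h\cdot n$ vanishes on $\GN$; combined with the elementwise constancy of $\nablah \tu_h$ and $\diver \tz_h$, this gives
\[
\int_\O \nablah \tu_h\cdot \Pi_{h,0}\tz_h \dv{x} + \int_\O \Pi_{h,0}\tu_h \,\diver\tz_h \dv{x} = 0.
\]
Subtracting this zero from $I_h(\tu_h) - D_h(\tz_h)$ decomposes the quantity as $A + B$ with
\[
A = \int_\O \phi(\nablah \tu_h) + \phi^*(\Pi_{h,0}\tz_h) - \nablah\tu_h \cdot \Pi_{h,0}\tz_h \dv{x}
\]
and $B$ the analogous integral for the pair $(\psi_h,\psi_h^*)$ evaluated at $\Pi_{h,0}\tu_h$ and $\diver \tz_h = \Pi_{h,0}\diver z$. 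Both integrands are pointwise nonnegative by Fenchel's inequality, and the two pieces can now be estimated independently.

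For $A$, the identity $\nablah\tu_h = \Pi_{h,0}\nabla u$ together with Jensen's inequality for the convex function $\phi$ yields $\int\phi(\nablah\tu_h) \le \int\phi(\nabla u)$; combining with the Fenchel equality $\phi(\nabla u)+\phi^*(z)=z\cdot\nabla u$ (valid at $z=D\phi(\nabla u)$) and the elementwise constancy of $\Pi_{h,0}\tz_h$ produces
\[
A \le \int_\O \nabla u \cdot (z - \Pi_{h,0}\tz_h) + \phi^*(\Pi_{h,0}\tz_h) - \phi^*(z) \dv{x},
\]
and convexity of $\phi^*$ at the base point $\Pi_{h,0}\tz_h$, together with $\nabla u = D\phi^*(z)$, rewrites this bound as the first term of the theorem.

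For $B$, convexity of $\psi_h$ in its second argument at the base point $\Pi_{h,0}\tu_h$ gives
\[
\psi_h(\Pi_{h,0}\tu_h) \le \psi_h(u) - D\psi_h(\Pi_{h,0}\tu_h)(u-\Pi_{h,0}\tu_h).
\]
After adding and subtracting $\psi(u)$ and $\psi^*(\diver z)$, the third and fourth terms of the theorem appear directly. The remaining contribution collapses through the Fenchel equality $\psi(u)+\psi^*(\diver z) = u\,\diver z$, the identity
\[
\int_\O u\,\diver z - \Pi_{h,0}\tu_h\,\Pi_{h,0}\diver z \dv{x} = \int_\O (u-\Pi_{h,0}\tu_h)\,\diver z \dv{x}
\]
(using elementwise constancy of $\Pi_{h,0}\tu_h$), and $\diver z = D\psi(u)$, into $\int (D\psi(u)-D\psi_h(\Pi_{h,0}\tu_h))(u-\Pi_{h,0}\tu_h)\dv{x}$, which is the second term of the theorem. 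Summing $A$ and $B$ gives the asserted inequality. The main delicate point is the exact cancellation of the cross-terms $\nablah\tu_h\cdot\Pi_{h,0}\tz_h$ and $\Pi_{h,0}\tu_h\,\diver\tz_h$ via~\eqref{eq:int_parts_rt_cr}, which is precisely what lets the argument bypass any Strang-type nonconformity correction.
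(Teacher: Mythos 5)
Your proof is correct, and while it follows the same overall strategy as the paper (coercivity plus weak discrete duality reduce everything to bounding $I_h(\cI_\CR u)-D_h(\cI_\RT z)$, then Jensen for $\phi$ and convexity of $\phi^*$ and $\psi_h$ at the projected points), the way you organize the middle of the argument is genuinely different. The paper keeps the $\phi$- and $\psi$-contributions coupled: it inserts the global strong duality identity $I(u)=D(z)$ to trade $\int\phi(\nabla u)+\psi^*(\diver z)\dv{x}$ for $-\int\psi(u)+\phi^*(z)\dv{x}$, and only at the very end disentangles the two resulting cross terms by invoking Lemma~\ref{la:exchange_projections} together with $D\phi^*(z)=\nabla u$ and $\diver z=D\psi(u)$. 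You instead use the discrete integration-by-parts formula~\eqref{eq:int_parts_rt_cr} at the outset to split the duality gap into two independent nonnegative Fenchel gaps $A$ and $B$, and close each one with the pointwise Fenchel--Young equalities $\phi(\nabla u)+\phi^*(z)=z\cdot\nabla u$ and $\psi(u)+\psi^*(\diver z)=u\,\diver z$; the only projection manipulations you need are the trivial ones $\int \Pi_{h,0}w\cdot q_h\dv{x}=\int w\cdot q_h\dv{x}$ for elementwise constant $q_h$. This bypasses Lemma~\ref{la:exchange_projections} and the continuous integration by parts hidden inside it, at the cost of using the pointwise optimality relations rather than only the integrated identity $I(u)=D(z)$ --- but both are among the standing assumptions of Section~\ref{sec:general}, so nothing extra is required. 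Your decomposition arguably makes it more transparent where each of the four terms in the final bound originates, and it makes explicit the cancellation mechanism that the introduction advertises as the reason no Strang-type correction is needed.
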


\begin{proof}
The interpolants $\cI_\RT z$ and $\cI_\CR u$ are well defined and we abbreviate 
\[
\tz_h = \cI_\RT z, \quad \tu_h = \cI_\CR u.
\]
By minimality of $u_h$ and the duality relation $I_h(u_h) \ge D_h(\cI_\RT z)$,
we have
\[\begin{split}
\s_{I_h}^2(u_h,\tu_h)  
& \le I_h(\tu_h) -I_h(u_h) \le I_h(\tu_h) - D_h(\tz_h) \\
&= \int_\O \phi(\nablah \tu_h) + \psi_h( \Pi_{h,0} \tu_h) + \phi^*(\Pi_{h,0} \tz_h) + \psi_h^*(\diver \tz_h) \dv{x}. 
\end{split}\]
The identity $\Pi_{h,0} \nabla u = \nablah \tu_h$
in combination with convexity of $\phi$ and Jensen's inequality on
every element leads to 
\[\begin{split}
\s_{I_h}^2(u_h,\tu_h)  
& \le \int_\O \phi(\nabla u) + \psi_h( \Pi_{h,0} \tu_h) 
+ \phi^*(\Pi_{h,0} \tz_h) + \psi_h^*(\diver \tz_h) \dv{x} \\
& = \int_\O \phi(\nabla u) + \psi_h ( \Pi_{h,0} \tu_h) 
+ \phi^*(\Pi_{h,0} \tz_h) + \psi^*(\diver z) \dv{x} \\
& \qquad + \int_\O \psi_h^*(\diver \tz_h) - \psi^*(\diver z) \dv{x}.
\end{split}\]
The duality relation $I(u)=D(z)$ allows us to replace the sum of the 
first and the last term in the first integral and implies that we have
\[\begin{split}
\s_{I_h}^2(u_h,\tu_h)  
& \le \int_\O  - \psi(u) +  \psi_h(\Pi_{h,0} \tu_h) +  \phi^*(\Pi_{h,0} \tz_h) - \phi^*(z) \dv{x}\\
& \qquad + \int_\O \psi_h^*(\diver \tz_h) - \psi^*(\diver z) \dv{x}.
\end{split}\]
We next use convexity of $\phi^*$ and $\psi_h$ at $\Pi_{h,0} \tz_h$ and
$\Pi_{h,0} \tu_h$, respectively, to deduce that
\[\begin{split}
\s_{I_h}^2 & (u_h,\tu_h)  
\le - \int_\O D \psi_h (\Pi_{h,0} \tu_h) \cdot (u-\Pi_{h,0} \tu_h) \dv{x} -\int_\O \psi(u)-\psi_h(u) \dv{x}  \\
& - \int_\O D\phi^*(\Pi_{h,0} \tz_h) \cdot (z- \Pi_{h,0} \tz_h) \dv{x} 
+ \int_\O \psi_h^*(\diver \tz_h) - \psi^*(\diver z) \dv{x}.
\end{split}\]
Using the relations $D\phi^*(z) = \nabla u$ and $\diver z = D\psi(u)$
in Lemma~\ref{la:exchange_projections} implies that 
\[
\int_\O D \phi^*(z) \cdot (z- \Pi_{h,0} \tz_h) \dv{x} 
 = -\int_\O (u- \Pi_{h,0} \tu_h) D\psi(u) \dv{x}.
\]
In combination with the previous estimate we deduce the error bound.
\end{proof}

\begin{remark} 
If $\psi$ is independent of $x\in \O$ and $\psi_h = \psi$ then the last 
two integrals on the right-hand side of the error estimate are nonpositive
by Jensen's inequality. 
\end{remark}

\subsection{Examples}
The abstract error estimates applies to various partial 
differential equations. For linear and quadratic low 
order terms $\psi$ the corresponding error terms simplify, provided the 
approximations $\psi_h$ are suitably chosen. 
 
\begin{proposition}[Low order terms]\label{prop:low_order}
(i) Assume that for $f\in L^q(\O)$ and $f_h = \Pi_{h,0} f$ we have
\[
\psi(x,s) = -f(x) s,  \quad  \psi_h(x,s) = -f_h(x) s.
\]
Then the error estimate of Theorem~\ref{thm:error_abstract} reduces to 
\[
\s_{I_h}^2(\tu_h, u_h)  
\le \int_\O \big(D\phi^*(z)- D\phi^*(\Pi_{h,0} \cI_\RT z )\big) \cdot (z- \Pi_{h,0} \cI_\RT z) \dv{x}.
\]
(ii) Assume that for $g\in L^2(\O)$ and $g_h = \Pi_{h,0} g$ we have
\[
\psi(x,s) = (g(x)-s)^2/2, \quad  \psi_h(x,s) = (g_h(x)-s)^2/2.
\]
Then the error estimate of Theorem~\ref{thm:error_abstract} reduces to 
\[\begin{split}
\s_{I_h}^2(\tu_h, u_h)  
&\le \int_\O \big(D\phi^*(z)- D\phi^*(\Pi_{h,0} \cI_\RT z )\big) \cdot (z- \Pi_{h,0} \cI_\RT z) \dv{x} \\
& \quad +   \|u-\Pi_{h,0} \cI_\CR u\|^2 .
\end{split}\]
\end{proposition}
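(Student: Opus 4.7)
My plan is to insert the specific forms of $\psi$ and $\psi_h$ into the four integrals on the right-hand side of Theorem~\ref{thm:error_abstract} and to show that, in case (i) all three low-order contributions vanish, and in case (ii) they combine into at most $\|u-\Pi_{h,0}\cI_\CR u\|^2$. The main tools are the $L^2$-orthogonality $\int_\O (w-\Pi_{h,0} w) p_h \dv{x} = 0$ for all $p_h \in \cL^0(\cT_h)$, the commutation $\diver \cI_\RT z = \Pi_{h,0}\diver z$, and the primal--dual optimality relation $\diver z = D\psi(u)$.

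For part (i), from $\psi(x,s)=-f(x)s$ one has $D\psi(u)=-f$ and $D\psi_h(\Pi_{h,0}\cI_\CR u) = -f_h$, so the second integrand becomes $(f_h-f)(u-\Pi_{h,0}\cI_\CR u)$; after expanding and using $L^2$-orthogonality against the elementwise constant $\Pi_{h,0}\cI_\CR u$, this equals $-\int_\O(f-f_h)u\dv{x}$, which cancels exactly the third integral $\int_\O \psi_h(u)-\psi(u)\dv{x} = \int_\O (f-f_h) u\dv{x}$. For the fourth integral, $\psi^*$ is the indicator function of $\{-f\}$ and $\psi_h^*$ of $\{-f_h\}$; the Euler--Lagrange relation $\diver z = -f$ forces $\Pi_{h,0}\diver z = -f_h$, so both $\psi^*(\diver z)$ and $\psi_h^*(\Pi_{h,0}\diver z)$ vanish. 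Only the leading dual-variable term survives.

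For part (ii), a direct computation gives $\psi^*(x,t) = t\, g(x)+t^2/2$ and similarly $\psi_h^*$ with $g_h$. The second integrand $(u-g - \Pi_{h,0}\cI_\CR u+g_h)(u-\Pi_{h,0}\cI_\CR u)$ yields $\|u-\Pi_{h,0}\cI_\CR u\|^2$ plus a cross term which, after the same orthogonality step, becomes $-\int_\O(g-g_h)u\dv{x}$. Expanding $(g_h-u)^2-(g-u)^2$ and using $\int_\O(g^2-g_h^2)\dv{x} = \|g-g_h\|^2$, the third integral equals $-\tfrac12\|g-g_h\|^2 + \int_\O(g-g_h)u\dv{x}$. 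For the fourth, set $q=\diver z$ and $q_h = \diver \cI_\RT z = \Pi_{h,0} q$; then $\int_\O(q_h g_h - qg)\dv{x} = -\int_\O(g-g_h)(q-q_h)\dv{x}$ and $\int_\O(q_h^2-q^2)\dv{x} = -\|q-q_h\|^2$, so the fourth integral equals $-\int_\O(g-g_h)(q-q_h)\dv{x} - \tfrac12\|q-q_h\|^2$. The optimality identity $\diver z = D\psi(u) = u-g$ gives $(q+g)-(q_h+g_h) = u - \Pi_{h,0}u$, so the three residual contributions assemble into $\|u-\Pi_{h,0}\cI_\CR u\|^2 - \tfrac12\|u-\Pi_{h,0}u\|^2$; discarding the nonpositive remainder yields the asserted bound.

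The main obstacle is the bookkeeping in (ii), where several cancellations involving $g$, $g_h$, $q$, $q_h$, $u$, and $\Pi_{h,0}u$ must be tracked at the same time; the crucial observation is that, once reorganized via $q+g=u$, the residual collapses to $-\tfrac12\|u-\Pi_{h,0}u\|^2$ and is therefore sign-definite. Apart from this, the argument is essentially a substitution into Theorem~\ref{thm:error_abstract} combined with the orthogonality of $\Pi_{h,0}$.
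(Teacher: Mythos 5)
Your proof is correct and follows essentially the same route as the paper: substitute the explicit forms of $\psi$, $\psi_h$ and their conjugates into the three low-order integrals of Theorem~\ref{thm:error_abstract}, cancel the data-oscillation cross terms via the $L^2$-orthogonality of $\Pi_{h,0}$, and use $\diver z=D\psi(u)$ together with $\diver\cI_\RT z=\Pi_{h,0}\diver z$ to control the conjugate terms. The only cosmetic difference is that in (ii) you track the remainder exactly as $-\tfrac12\|u-\Pi_{h,0}u\|^2$ via the substitution $q+g=u$, whereas the paper reaches the same nonpositive conclusion slightly more coarsely through Jensen's inequality applied to $\diver\cI_\RT z+g_h=\Pi_{h,0}(\diver z+g)$.
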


\begin{proof}
As above we abbreviate $\tu_h = \cI_\CR u$ and $\tz_h = \cI_\RT z$. 
In the first case we have 
\[
\psi^*(x,t) = I_{\{-f(x)\}}(t), \quad \psi_h^*(x,t) = I_{\{-f_h(x)\}}(t).
\]
Hence, the last three integrals in the error estimate of Theorem~\ref{thm:error_abstract}
become 
\[\begin{split}
E_\psi & = -\int_\O (f-f_h) (u-\Pi_{h,0}\tu_h) \dv{x} - \int_\O f_h u - f u \dv{x} \\
& \qquad + \int_\O I_{\{-f_h\}}(\diver \tz_h) - I_{\{-f\}}(\diver z) \dv{x},
\end{split}\] 
so that $E_\psi = 0$ since $f-f_h$ is orthogonal to $\Pi_{h,0} \tu_h$ 
and $\diver z = -f$ and $\diver \tz_h = -f_h$. In the second case we have
\[
\psi^*(x,t) = \frac12 (t+g(x))^2 - \frac12 g(x)^2, \quad 
\psi_h^*(x,t) = \frac12 (t+g_h(x))^2  - \frac12 g_h(x)^2.
\]
The corresponding error terms are given by
\[\begin{split}
E_\psi& = \int_\O \big((u-g)-(\Pi_{h,0} \tu_h -g_h)\big) (u- \Pi_{h,0}\tu_h) \dv{x}
+ \frac12 \int_\O (g_h-u)^2 - (g-u)^2 \dv{x} \\
& \quad 
+ \frac12 \int_\O (\diver \tz_h + g_h)^2 - g_h^2  - (\diver z + g)^2 + g^2\dv{x}.
\end{split}\]
The relation $\diver \tz_h + g_h = \Pi_{h,0}(\diver z + g)$ in combination with
Jensen's inequality and elementary calculations imply that 
\[
E_\psi \le \int_\O (u- \Pi_{h,0} \tu_h)^2 \dv{x}.
\]
This proves the simplified error estimate.
\end{proof}

\begin{remark}
Using the strong convexity of $\psi$ in case~(ii) of 
Proposition~\ref{prop:low_order} the factor~$1$ in front of the term 
$\|u-\Pi_{h,0} \cI_\CR u\|^2$ can be replaced by~1/2. 
\end{remark}

Typical choices for the function $\phi$ correspond to $p$-Laplace equations.

\begin{example}[$p$-Dirichlet problems]\label{ex:p_laplace}
For $1<p<\infty$ let $\phi(s) = |s|^p/p$ and $\psi(x,s) = -f(x)s$ for 
$f\in L^q(\O)$ with $q = p' =p/(p-1)$. Noting that $\phi^*(t) = |t|^q/q$ we define
\[
F(a) = |a|^{(p-2)/2} a, \quad 
\tS(v) = D\phi^*(v) = |v|^{q-2} v, \quad
\tF(v) = |v|^{(q-2)/2} v.
\]
We abbreviate $\tz_h = \cI_\RT z$ and use inequalities from~\cite{DiEbRu07} which are
explained in Appendix~\ref{app:p_laplace} to verify that the error estimate
of Theorem~\ref{thm:error_abstract} becomes
\[\begin{split}
c_p \big\|F(\nablah \cI_\CR u) - F(\nablah u_h)\big\|^2 
& \le \int_\O \big(\tS(z)- \tS(\Pi_{h,0} \tz_h)\big) \cdot (z- \Pi_{h,0} \tz_h)\dv{x} \\
& \le c_p' \|\tF(z) - \tF(\Pi_{h,0} \tz_h)\|^2.
\end{split}\]
The right-hand side can be bounded using techniques from~\cite{DieRuz07} 
provided $\tF(z)\in W^{1,2}(\O;\R^d)$. The results provided there also imply that
$\|F(\nablah \cI_\CR u) - F(\nabla u)\| \le c h \|\nabla F(\nabla u)\|$.  
The estimate confirms error estimates from~\cite{BarLiu93,LiuYan01,DieRuz07}.
\end{example}

\section{Nonsmooth problems}\label{sec:nonsmooth}
We discuss in this section necessary adjustments of the general theory 
to apply it to nondifferentiable problems, where, e.g., well-posedness
and admissibility of modified interpolants has to be ensured.

\subsection{Obstacle problem}
We consider a prototypical obstacle problem defined by minimizing
\[
I(u) = \frac12 \int_\O |\nabla u|^2 \dv{x} - \int_\O f u \dv{x} + I_+(u)
\]
in the set $W^{1,2}_D(\O)$, where $I_+$ is the indicator functional of
functions that are nonnegative almost everywhere. With the
\[
\psi(x,s) = -f(x) u + I_+(s)
\]
we have 
\[
\psi^*(x,t) = I_-(t+f(x)).
\]
The dual problem thus determines a maximizing $z\in L^2(\O;\R^d)$ for
\[
D(z) = - \frac12 \int_\O |z|^2 \dv{x} - I_-(\diver z + f),
\]
where the indicator functional $I_-$ is finite if $\diver z + f$ is nonpositive
as a functional on $W^{1,2}_D(\O)$. We have $z=\nabla u$ and
a complementarity principle implies that $\diver z + f =0$ whenever $u>0$. 
We remark that general obstacles $\chi \in H^1_D(\O)$ can be treated
via a substitution $u = \tu+ \chi$ which leads to a modified function
$f$ provided that $\Delta \chi \in L^2(\O)$. 

\subsubsection*{Discretization}
The discrete primal problem imposes the obstacle constraint at midpoints
of elements, i.e., we consider
\[
I_h(u_h) = \frac12 \int_\O |\nablah u_h|^2 \dv{x} 
- \int_\O f_h u_h \dv{x}
+ I_+(\Pi_{h,0} u_h),
\]
where $f_h = \Pi_{h,0} f$. Proposition~\ref{prop:discrete_dual_strong}
shows that the discrete dual problem consists in determining a 
maximizing vector field $z_h\in \RT^0_N(\cT_h)$ for 
\[
D_h(z_h) = -\frac12 \int_\O |\Pi_{h,0} z_h|^2 \dv{x} - I_- (\diver z_h + f_h).
\]
Adopting the ideas of the general error analysis leads to a quasi-optimal
error estimate. We note that imposing the obstacle condition at midpoints
of elements instead of midpoints of element sides as in the two-dimensional
setting considered in~\cite{CarKoh17} simplifies the error analysis. 

\begin{proposition}[Error estimate]
Let $u\in H^1_D(\O)$ and $u_h\in \cS^{1,\CR}_D(\cT_h)$ are the solutions
of the primal and discrete primal problem, respectively. If
the solution $z\in L^2(\O;\R^d)$ of the dual problem satisfies $z \in H^1(\O;\R^d)$ 
then we have
\[
\|\nablah (u_h - u)\| \le  c h \big(\|D^2 u\| + \|f+\diver z\|\big).
\]
\end{proposition}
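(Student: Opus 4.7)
The plan is to specialize the duality-based error analysis of Theorem~\ref{thm:error_abstract} to the obstacle setting, taking care of the nondifferentiability through the admissibility of the interpolants. Since $\phi(s) = |s|^2/2$ is strongly convex, the coercivity functional is $\s_{I_h}^2(v_h,w_h) = \tfrac12\|\nablah(v_h-w_h)\|^2$, so the task reduces to bounding $I_h(\cI_\CR u) - I_h(u_h)$ from above.

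First I would verify the admissibility of the natural interpolants. Since $u \ge 0$ almost everywhere, every side-average $u_S$ is nonnegative, and so on each element $\Pi_{h,0}\cI_\CR u \ge 0$; hence $\cI_\CR u$ is feasible for the discrete primal problem. On the dual side, the assumption $z \in H^1(\O;\R^d)$ ensures $\diver z + f \in L^2(\O)$ with $\diver z + f \le 0$ a.e., and the commutation $\diver \cI_\RT z = \Pi_{h,0}\diver z$ yields $\diver \cI_\RT z + f_h = \Pi_{h,0}(\diver z + f) \le 0$, so $\cI_\RT z$ is feasible for the discrete dual problem.

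Second, combining the strong-convexity inequality with the weak duality $I_h(u_h) \ge D_h(\cI_\RT z)$ gives
\[
\tfrac12 \|\nablah(u_h - \cI_\CR u)\|^2 \le I_h(\cI_\CR u) - D_h(\cI_\RT z).
\]
Using $\nablah \cI_\CR u = \Pi_{h,0}\nabla u$ and $z = \nabla u$, completing the square, and applying the integration-by-parts formula~\eqref{eq:int_parts_rt_cr} together with $\diver \cI_\RT z = -f_h + \Pi_{h,0}(\diver z + f)$, the right-hand side reduces to
\[
\tfrac12 \|\Pi_{h,0}(z - \cI_\RT z)\|^2 - \int_\O \Pi_{h,0}\cI_\CR u\,(\diver z + f)\dv{x}.
\]
The first term is controlled by the standard Raviart--Thomas interpolation estimate applied to $z = \nabla u$ and contributes $\le c h^2 \|D^2 u\|^2$.

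The main obstacle is the complementarity slack term, which has no smooth analogue in Theorem~\ref{thm:error_abstract}. The crucial structural input is the variational-inequality identity $\int_\O u(\diver z + f)\dv{x} = 0$, which holds because $u \ge 0$, $\diver z + f \le 0$, and the product vanishes a.e. Using this to insert $u$ into the integral, together with the self-adjointness of $\Pi_{h,0}$ and orthogonality of $u - \Pi_{h,0}u$ to piecewise constants, I would rewrite
\[
-\int_\O \Pi_{h,0}\cI_\CR u\,(\diver z + f)\dv{x}
= \int_\O (u - \Pi_{h,0}\cI_\CR u)(\diver z + f)\dv{x}
\]
and then bound this by $c h^2 \|D^2 u\|\,\|\diver z + f\|$ via Cauchy--Schwarz combined with the $L^2$-interpolation estimates for $\cI_\CR$ on $H^2$ functions. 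Inserting these two contributions into the duality bound, applying Young's inequality, and combining with $\|\nablah(\cI_\CR u - u)\| \le ch\|D^2 u\|$ through the triangle inequality yields the asserted estimate.
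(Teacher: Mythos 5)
Your setup is sound and follows the paper's strategy up to the decisive step: the admissibility checks for $\cI_\CR u$ and $\cI_\RT z$ are correct (and worth making explicit), the reduction of $I_h(\cI_\CR u)-D_h(\cI_\RT z)$ to an interpolation term plus the complementarity slack term is right, and the identity $\int_\O u\,(\diver z+f)\dv{x}=0$ correctly converts the slack term into $\int_\O (u-\Pi_{h,0}\cI_\CR u)(\diver z+f)\dv{x}$. The gap is in how you bound this last integral. Cauchy--Schwarz gives $\|u-\Pi_{h,0}\cI_\CR u\|\,\|f+\diver z\|$, and $\|u-\Pi_{h,0}\cI_\CR u\|$ is only $\cO(h)$, not $\cO(h^2)$: writing $u-\Pi_{h,0}\cI_\CR u=(u-\cI_\CR u)+(\cI_\CR u-\Pi_{h,0}\cI_\CR u)$, the first piece is $\cO(h^2\|D^2u\|)$, but the second piece equals $\nablah \cI_\CR u\cdot(x-x_T)$ on each element and is genuinely of size $\cO(h\|\nabla u\|)$ --- a piecewise constant cannot approximate a nonconstant $u$ to second order in $L^2$. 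With your argument you therefore only obtain $\|\nablah(u_h-\cI_\CR u)\|^2\le c\,h$, i.e.\ the rate $h^{1/2}$, half an order short of the assertion.

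The missing idea, which is the heart of the paper's proof, is a \emph{second} use of complementarity. Write $\mu=f+\diver z$ and split
\[
\int_\O \mu\,(u-\Pi_{h,0}\cI_\CR u)\dv{x}
=\int_\O \mu\,(u-\cI_\CR u)\dv{x}
+\int_\O \mu\,(\cI_\CR u-\Pi_{h,0}\cI_\CR u)\dv{x}.
\]
The first term is $\cO(h^2\|\mu\|\,\|D^2u\|)$ by the second-order interpolation estimate. For the second term one uses that $\mu$ vanishes off the contact set $\cC_T=\{x\in T: u(x)=0\}$ and that $\nabla u=0$ a.e.\ on $\cC_T$; hence on the support of $\mu$ one may replace $\cI_\CR u-\Pi_{h,0}\cI_\CR u=\nablah\cI_\CR u\cdot(x-x_T)$ by $\nablah(\cI_\CR u-u)\cdot(x-x_T)$, which is bounded elementwise by $h_T\|\mu\|_{L^2(T)}\|\nablah(\cI_\CR u-u)\|_{L^2(T)}=\cO(h^2)$. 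Only this exploitation of the structure of $\mu$ recovers the full linear rate.
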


\begin{proof}
Throughout this proof we abbreviate $\tu_h = \cI_\CR u$ and $\tz_h= \cI_\RT z$. 
Minimality of $u_h$, strong convexity of $I_h$, and discrete duality imply that 
\[
\d_h^2 =  \frac12 \|\nablah (u_h - \tu_h)\|^2 \le I_h(\tu_h) - D_h(\tz_h).
\]
The relation $\nabla \tu_h = \Pi_{h,0} \nabla u$ in combination with Jensen's 
inequality and the identity $I(u) = D(z)$ show that
\[
\d_h^2 \le -\frac12 \int_\O |z|^2 \dv{x} + \int_\O f u - f_h \Pi_{h,0}\tu_h \dv{x} 
+ \frac12 \int_\O  |\Pi_{h,0} \tz_h|^2 \dv{x}.
\]
Using Lemma~\ref{la:exchange_projections} with $\nabla u = z$ 
and noting $f_h =\Pi_{h,0} f$ leads to 
\[
\d_h^2 \le \int_\O (f+\diver z)  (u - \Pi_{h,0}\tu_h) \dv{x} 
+  \frac12 \int_\O  |z-\Pi_{h,0} \tz_h|^2 \dv{x}.
\]
We abbreviate $\mu = f + \diver z \in L^2(\O)$ and insert $\tu_h = \cI_\CR u$ 
to rewrite the first term on the right-hand side as
\[
\int_\O \mu (u-\Pi_{h,0}\tu_h) \dv{x} = 
\int_\O \mu (u- \tu_h) \dv{x} + \int_\O \mu (\tu_h - \Pi_{h,0}\tu_h) \dv{x}.
\]
To deduce the error estimate it remains to bound the second term  on the
right-hand side. For $T\in \cT_h$ let $\cC_T = \{x\in T: u(x)=0\}$ 
and note that $\l|_{T\setminus \cC_T} = 0$. Since $\nabla u = 0$
almost everywhere on $\cC_T$ and since $\Pi_{h,0} \tu_h|_T = \tu_h(x_T)$  
it follows from $\tu_h(x) = \tu_h(x_T) + \nablah \tu_h|_T \cdot (x-x_T)$
that 
\[\begin{split}
\int_T \mu (\tu_h - \Pi_{h,0}\tu_h) \dv{x} 
&= \int_{\cC_T} \mu \,  \nablah (\tu_h-u) \cdot (x-x_T) \dv{x} \\
&\le h_T \|\mu\|_{L^2(T)} \|\nablah (\tu_h-u)\|_{L^2(T)}.
\end{split}\]  
We thus deduce that 
\[
\d_h^2 \le \|\mu\|\big(\|u - \tu_h\| + h \|\nablah (u-\tu_h)\|\big)
+  \frac12 \|z-\Pi_{h,0} \tz_h\|^2,
\]
which implies the error estimate. 
\end{proof}

\subsubsection*{Flux reconstruction}
The discrete flux $z_h$ can be constructed if a discrete Lagrange
multiplier $\mu_h\in \Pi_{h,0} \cS^{1,\CR}_D(\cT_h)$ is given, i.e., $\mu_h \le 0$ 
is such that
\[
(\mu_h,v_h) = (f_h,v_h) - (\nablah u_h,\nablah v_h)
\]
for all $v_h\in \cS^{1,\CR}_D(\cT_h)$. We then have that
\[
z_h(x) = \nablah u_h|_T - \frac{(f_h - \mu_h)|_T}{d} (x-x_T)
\]
for all $T\in \cT_h$ and $x\in T$.

\subsection{Total variation minimization}\label{sub_sec:tv_min}
Given a function $g\in L^2(\O)$ we consider the primal problem that 
consists in determining a function $u\in BV(\O)\cap L^2(\O)$ which is 
minimial for the functional 
\[
I(u) = |\DD u|(\O) + \frac12 \|u-g\|^2.
\]
The corresponding dual problem determines a maximizing
vector field $z\in W^2_{\!N}(\diver;\O)$ with $\GN=\p\O$ for the functional
\[
D(z) = - \frac12 \|\diver z + g\|^2 + \frac12 \|g\|^2
\]
subject to the pointwise constraint $|z|\le 1$ in~$\O$. From the characterization
\[
|\DD u|(\O) = \sup \Big\{ - \int_\O u \diver z \dv{x} : z\in W^2_{\!N}(\diver;\O), \, 
|z|\le 1 \text{ in $\O$}\Big\},
\]
we obtain the strong duality relation
\[
I(u) = D(z)
\]
for solutions $u$ and $z$ of the primal and dual problems, where $u$ and
$z$ are related via $\diver z = u-g$ and the subdifferential inclusion
$z\in \p |\nabla u|$, cf., e.g.,~\cite{HinKun04}.

\subsubsection*{Discretization}
With $g_h = \Pi_{h,0}g$ the discrete minimization problem is defined as 
the minimization of
\[
I_h(u_h) = \int_\O |\nablah u_h| \dv{x}  + \frac12 \|\Pi_{h,0}u_h-g_h\|^2
\]
in the set of all $u_h \in \cS^{1,\CR}(\cT_h)$. The discrete dual formulation
consists in a maximization of 
\[
D_h(z_h) = - \frac12 \|\diver z_h + g_h\|^2 + \frac12 \|g_h\|^2
\]
in the set $\RT^0_{\!N}(\cT_h)$ subject to the constraints 
$|z_h(x_T)|\le 1$ for all $T\in \cT_h$. Related discretizations have
been used in~\cite{HHSVW19}. The discretization used here is obtained from
Proposition~\ref{prop:discrete_duality} which shows that for every 
$\ou_h \in  \Pi_{h,0} \cS^{1,\CR}_D(\cT_h)$ we have 
\[\begin{split}
&\inf \Big\{\int_\O |\nablah u_h|\dv{x}: \, u_h \in \cS^{1,\CR}(\cT_h), \Pi_{h,0} u_h = \ou_h \Big\}
\\ &\, \ge \sup\Big\{ -\int_\O \ou_h \diver z_h \dv{x}: \,
z_h \in \RT^0_{\!N}(\cT_h), \, |z_h(x_T)| \le 1 \text{ for all $T\in \cT_h$} \Big\},
\end{split}\]
and by using the relation $\diver z_h = \Pi_{h,0} u_h - g_h$  
and arguing as in Proposition~\ref{prop:discrete_dual_strong} 
we obtain the  discrete duality relation
\[
I_h(u_h) \ge D_h(z_h)
\]
for optimal elements $u_h$ and $z_h$, respectively. 
The following quasi-optimal error estimate 
is obtained via constructing appropriate comparison functions. 
It confirms an estimate from~\cite{ChaPoc19-pre} 
in which a discretization using piecewise constant functions and implicitly
incorporating Crouzeix--Raviart elements has been considered. We closely
follow the arguments used therein. 
It is remarkable that the data approximation error $g-g_h$ does not occur explicitly 
which avoids imposing restrictive conditions on $g$.

\begin{proposition}[Error estimate]
Let $u\in BV(\O)\cap L^2(\O)$ and $u_h\in \cS^{1,\CR}(\cT_h)$ be optimal 
for $I$ and $I_h$, respectively. Assume that $g\in L^\infty(\O)$ and there 
exists an optimal $z\in W^2_{\!N}(\diver;\O)$ for
$D$ with $z\in W^{1,\infty}(\O;\R^d)$. We then have that
\[
\|u-u_h\| \le c h^{1/2} \big(\|u\|_{L^\infty(\O)} |\DD u|(\O) + \|g\|\|\nabla z\|_{L^\infty(\O)} \|\diver z\|\big)^{1/2},
\]
where $\|u\|_{L^\infty(\O)} \le \|g \|_{L^\infty(\O)}$.
\end{proposition}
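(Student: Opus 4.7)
The plan is to exploit strong convexity of $I_h$ in the variable $\Pi_{h,0}u_h$ together with the discrete duality gap from Proposition~\ref{prop:discrete_dual_strong}: for any admissible pair $(\tilde u_h,\tilde z_h)\in \cS^{1,\CR}(\cT_h)\times \RT^0_{\!N}(\cT_h)$ with $|\tilde z_h(x_T)|\le 1$ for every $T\in\cT_h$, I would start from
\[
\tfrac12\|\Pi_{h,0}(u_h - \tilde u_h)\|^2 \le I_h(\tilde u_h) - I_h(u_h) \le I_h(\tilde u_h) - D_h(\tilde z_h).
\]
Using the integration-by-parts formula~\eqref{eq:int_parts_rt_cr} (the boundary term vanishes because $\tilde z_h\cdot n =0$ on $\GN=\pO$), the duality gap can be recast as
\[
I_h(\tilde u_h) - D_h(\tilde z_h) = \int_\O \big(|\nablah \tilde u_h| - \Pi_{h,0}\tilde z_h\cdot \nablah \tilde u_h\big)\dv{x} + \tfrac12\|\Pi_{h,0}\tilde u_h - g_h - \diver\tilde z_h\|^2,
\]
and the first integrand is nonnegative by Cauchy--Schwarz, since $|\Pi_{h,0}\tilde z_h|\le 1$.

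For the admissible dual comparison I would rescale the Raviart--Thomas interpolant and set $\tilde z_h = \sigma\,\cI_\RT z$ with $\sigma = (1+ch\|\nabla z\|_{L^\infty(\O)})^{-1}$. Because $|z|\le 1$ in $\O$ and $z\in W^{1,\infty}$, the bound $|\cI_\RT z(x_T)|\le 1 + ch\|\nabla z\|_{L^\infty(\O)}$ on each element ensures admissibility; moreover the commutativity $\diver\cI_\RT z = \Pi_{h,0}\diver z$ yields $\diver\tilde z_h = \sigma\Pi_{h,0}\diver z$, so its difference with $\diver z$ is of size $O(h(1+\|\nabla z\|_{L^\infty})\|\diver z\|)$. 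For the primal comparison I would follow the construction of~\cite{ChaPoc19-pre}, combining a mollification of $u$ with Crouzeix--Raviart interpolation to obtain $\tilde u_h\in \cS^{1,\CR}(\cT_h)$ satisfying $\int_\O|\nablah\tilde u_h|\dv{x} \le |\DD u|(\O) + O(h^{1/2})$ together with the decisive BV-bound $\|\Pi_{h,0}\tilde u_h - u\|^2 \le c h\|u\|_{L^\infty(\O)}|\DD u|(\O)$. The loss of one power of $h$ in the squared $L^2$-error is intrinsic to the mere BV regularity of $u$ and is precisely what forces the exponent $1/2$ on $h$ in the final rate.

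To close the argument, the squared $L^2$-term in the duality gap simplifies, via the exact relation $u-g-\diver z = 0$ and the orthogonality $(g-g_h)\perp \cL^0(\cT_h)$, to $\|\Pi_{h,0}(\tilde u_h - u) + (1-\sigma)\Pi_{h,0}\diver z\|^2$ and is thus $\le c h\|u\|_{L^\infty}|\DD u|(\O) + c h^2\|\nabla z\|_{L^\infty}^2\|\diver z\|^2$. The sign-definite first term is handled via the characterization $|\DD u|(\O) = -\int_\O u\diver z\dv{x}$ together with the TV-bound on $\tilde u_h$, producing $(1-\sigma)|\DD u|(\O)$ and a mixed remainder of order $h^{1/2}\|\Pi_{h,0}\tilde u_h-u\|\|\diver z\|$ that, after Young's inequality, is absorbed into the asserted right-hand side $c h\bigl(\|u\|_{L^\infty(\O)}|\DD u|(\O) + \|g\|\|\nabla z\|_{L^\infty(\O)}\|\diver z\|\bigr)$. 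The final estimate on $\|u-u_h\|$ follows by triangle inequality through $\|u-\Pi_{h,0}\tilde u_h\|$, the already-controlled $\|\Pi_{h,0}(u_h-\tilde u_h)\|$, and $\|u_h-\Pi_{h,0}u_h\|$; the last contribution is handled by the a~priori bound $\|u_h\|_{L^\infty(\O)}\le\|g\|_{L^\infty(\O)}$ (and likewise $\|u\|_{L^\infty(\O)}\le\|g\|_{L^\infty(\O)}$) obtained by a truncation argument together with elementwise Poincar\'e. The main obstacle is the BV-compatible construction of $\tilde u_h$ with simultaneous control of the discrete total variation and the $L^2$-averaging error, which is where the BV regularity of $u$ enters essentially.
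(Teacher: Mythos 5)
Your architecture is the paper's: strong convexity of $I_h$ in $\Pi_{h,0}u_h$ combined with the discrete weak duality $I_h(u_h)\ge D_h(\tz_h)$, a rescaled Raviart--Thomas interpolant $\tz_h=\sigma\,\cI_\RT z$ for admissibility (Lemma~\ref{la:interpol_dual}), and a mollified Crouzeix--Raviart quasi-interpolant for the primal comparison (Lemma~\ref{la:interpol_primal}). The difference lies only in how you book-keep the duality gap, and there your final step fails. After your integration by parts the gap contains the cross term $\sigma\int_\O(\Pi_{h,0}\tu_h-u)\diver z\dv{x}$ (the counterpart of the term $\tfrac12\int_\O(\ou_h-u)(\ou_h+u-2g)\dv{x}$ absorbed inside Lemma~\ref{la:interpol_primal}). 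You estimate it by Cauchy--Schwarz in $L^2$, which gives $\|\Pi_{h,0}\tu_h-u\|\,\|\diver z\|\le c\,h^{1/2}\big(\|u\|_{L^\infty(\O)}|\DD u|(\O)\big)^{1/2}\|\diver z\|$, and then invoke Young's inequality. But $h^{1/2}a^{1/2}b\le\tfrac12 ha+\tfrac12 b^2$ leaves the term $\tfrac12\|\diver z\|^2$ carrying no power of $h$ whatsoever; it cannot be absorbed into a right-hand side of order $h$. With this estimate the duality gap is only $O(h^{1/2})$ and your rate degrades to $\|u-u_h\|\le c\,h^{1/4}$. The correct pairing is $L^1$--$L^\infty$: since $\diver z=u-g\in L^\infty(\O)$ (this is exactly where the hypothesis $g\in L^\infty(\O)$ enters) and $\|\Pi_{h,0}\tu_h-u\|_{L^1(\O)}\le c\,h\,|\DD u|(\O)$, the cross term is of order $h\,\|g\|_{L^\infty(\O)}|\DD u|(\O)$. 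This is precisely the estimate performed in the proof of Lemma~\ref{la:interpol_primal}.

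Two further points in the same direction. First, the slack you allow in $\int_\O|\nablah\tu_h|\dv{x}\le|\DD u|(\O)+O(h^{1/2})$ would by itself already cap the duality gap at $O(h^{1/2})$; the paper's construction (Crouzeix--Raviart interpolation of a strict mollification, then passage to an accumulation point) yields $\|\nablah\tu_h\|_{L^1(\O)}\le|\DD u|(\O)$ with no error term, and you need that exactness. Second, the bound $\|u_h\|_{L^\infty(\O)}\le\|g\|_{L^\infty(\O)}$ for the \emph{discrete} minimizer cannot be obtained by truncation, since truncating a Crouzeix--Raviart function leaves the space; the paper instead controls $\|u_h-\Pi_{h,0}u_h\|$ via $\|v_h-\Pi_{h,0}v_h\|_{L^2(\O)}\le c\,h\,\|\nablah v_h\|_{L^1(\O)}\|v_h\|_{L^\infty(\O)}$ together with a priori bounds following from $I_h(u_h)\le I_h(\tu_h)$. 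A minor cosmetic issue is that your $(1-\sigma)^2\|\diver z\|^2\sim h^2\|\nabla z\|_{L^\infty(\O)}^2\|\diver z\|^2$ is not literally dominated by $h\,\|g\|\,\|\nabla z\|_{L^\infty(\O)}\|\diver z\|$ unless $h\|\nabla z\|_{L^\infty(\O)}\lesssim1$; the paper sidesteps the squared term via $\g_h^{-2}\le1$ in Lemma~\ref{la:interpol_dual}.
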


\begin{proof}
The strong convexity properties of $I_h$ and the discrete duality relation
yield that 
\[
\frac12 \|\Pi_{h,0}(u_h - \tu_h) \|^2 \le I_h(\tu_h) - I_h(u_h) \le I_h(\tu_h)- D_h(\tz_h)
\]
for every $\tu_h\in \cS^{1,\CR}(\cT_h)$ and $\tz_h\in \RT^0_{\!N}(\cT_h)$ with 
$|\tz_h(x_T)|\le 1$. Since $g\in L^\infty(\O)$ we have that 
$u\in L^\infty(\O)$ with $\|u\|_{L^\infty(\O)} \le \|g\|_{L^\infty(\O)}$
and Lemma~\ref{la:interpol_primal} below yields the existence
of $\tu_h\in \cS^{1,\CR}(\cT_h)$ with 
\[
I_h(\tu_h) \le I(u) + c h - \frac12 \|g-g_h\|^2
\]
and 
\[
\|u-\tu_h\|_{L^1(\O)} \le c h, \quad \|\tu_h\|_{L^\infty(\O)} \le c. 
\]
Letting $\tz_h\in \RT^0_{\!N}(\cT_h)$ be the function constructed in
Lemma~\ref{la:interpol_dual} below we find that
\[ 
D_h (\tz_h) \ge D(z) - c h - \frac12 \|g-g_h\|^2.
\]
On combining the previous estimates, 
and noting that $I(u) = D(z)$, we deduce that
\[
\frac12 \|\Pi_{h,0} (u_h - \tu_h)\|^2 \le c h.
\]
We incorporate the estimates
\[
\|u-\tu_h\| \le \|u-\tu_h\|_{L^1(\O)}^{1/2} \|u-\tu_h\|_{L^\infty(\O)} \le c h^{1/2}
\]
and
\[
\|v_h- \Pi_{h,0} v_h\|_{L^2(\O)} \le c h \|\nablah v_h\|_{L^1(\O)} \|v\|_{L^\infty(\O)}
\]
for every $v_h\in \cS^{1,\CR}_D(\cT_h)$ to deduce the error bound. 
\end{proof}

\subsubsection*{Modified interpolants}
The following lemma provides the primal comparison function with explicit constants. 

\begin{lemma}[Primal quasi-interpolant]\label{la:interpol_primal}
Given any $u\in BV(\O)\cap L^\infty(\O)$ there exists $\tu_h\in \cS^{1,\CR}(\cT_h)$ 
such that
\[
I_h(\tu_h) \le I(u) + 2 c_d c_\CR h \|u\|_{L^\infty(\O)} |\DD u|(\O) - \frac12 \|g-g_h\|^2.
\]
\end{lemma}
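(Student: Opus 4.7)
The plan is to build $\tu_h$ as the Crouzeix--Raviart interpolant of a regularization of $u$ and then pass to the limit in the mollification. Concretely, let $\rho_\veps$ be a standard mollifier and set $u_\veps = \rho_\veps \ast u$ (with a suitable extension past $\p\O$), and define $\tu_h^\veps := \cI_\CR u_\veps \in \cS^{1,\CR}(\cT_h)$. The final $\tu_h$ is obtained by sending $\veps \to 0$, using $L^1$-stability of $\cI_\CR$ together with strong convergence $u_\veps \to u$ in $L^1(\O)$ and lower semicontinuity of the total variation (all quantities bounded uniformly thanks to $u\in L^\infty$). Since $\|\cI_\CR v\|_{L^\infty(\O)} \le c_d \|v\|_{L^\infty(\O)}$, the interpolants $\tu_h^\veps$ remain uniformly bounded in $L^\infty(\O)$, which is decisive for applying dominated convergence in the data term.

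The gradient contribution is controlled by the commutation identity $\nablah \cI_\CR u_\veps = \Pi_{h,0} \nabla u_\veps$ combined with Jensen's inequality,
\[
\int_\O |\nablah \tu_h^\veps|\dv{x} \le \int_\O |\nabla u_\veps|\dv{x} \;\xrightarrow[\veps\to0]{}\; |\DD u|(\O),
\]
so the bound on $\int_\O|\nablah\tu_h|\dv x$ by $|\DD u|(\O)$ is automatic. For the data term I would exploit the orthogonality identity
\[
\tfrac{1}{2}\|\Pi_{h,0}\tu_h - g_h\|^2 + \tfrac{1}{2}\|g-g_h\|^2 = \tfrac{1}{2}\|\Pi_{h,0}\tu_h - g\|^2,
\]
which holds since $\Pi_{h,0}\tu_h - g_h$ is elementwise constant while $g - g_h$ has vanishing mean on each $T\in\cT_h$. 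The desired bound then reduces to showing
\[
\tfrac{1}{2}\|\Pi_{h,0}\tu_h - g\|^2 \le \tfrac{1}{2}\|u-g\|^2 + 2 c_d c_\CR h \|u\|_{L^\infty}|\DD u|(\O),
\]
which absorbs the missing $-\tfrac12\|g-g_h\|^2$ into the right-hand side of the lemma.

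To establish this comparison I would factor
\[
\|\Pi_{h,0}\tu_h - g\|^2 - \|u-g\|^2 = \int_\O (\Pi_{h,0}\tu_h - u)\,(\Pi_{h,0}\tu_h + u - 2g)\dv{x},
\]
decompose $2g = 2g_h + 2(g-g_h)$, and use the orthogonality $(\Pi_{h,0}\tu_h - \Pi_{h,0}u,\, g-g_h) = 0$ to reduce the $g$-dependent part to an $L^1$--$L^\infty$ pairing with $\|g\|_{L^\infty}$ absent from the final constant. For the remaining factors I would rely on the explicit formula $\Pi_{h,0} \cI_\CR v|_T = \frac{1}{d+1}\sum_i \bar v_{S_i}$ and a BV--trace/Poincar\'e estimate to obtain $\|\Pi_{h,0} \tu_h - u\|_{L^1(\O)} \le c_\CR h |\DD u|(\O)$, combined with $\|\Pi_{h,0} \tu_h + u\|_{L^\infty(\O)} \le (c_d + 1)\|u\|_{L^\infty(\O)}$. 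The main obstacle is precisely keeping the $L^\infty$ constants purely in terms of $\|u\|_{L^\infty}$ rather than $\|g\|_{L^\infty}$: this is achieved by the orthogonality splitting above combined with the elementary inequality $2ab - a^2 \le b^2$, which transfers the residual $\|g-g_h\|$-factor onto the subtracted $\tfrac12\|g-g_h\|^2$ on the right-hand side of the assertion. Summing the gradient and data contributions and passing $\veps\to 0$ then yields the claim.
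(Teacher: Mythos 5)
Your construction of $\tu_h$ --- mollify $u$, apply $\cI_\CR$, pass to the limit in $\veps$ using the $L^\infty$-stability $\|\cI_\CR v\|_{L^\infty(\O)}\le c_d\|v\|_{L^\infty(\O)}$ --- the treatment of the gradient term via $\nablah \cI_\CR u_\veps=\Pi_{h,0}\nabla u_\veps$ and Jensen, the orthogonality identity $\|\Pi_{h,0}\tu_h-g_h\|^2=\|\Pi_{h,0}\tu_h-g\|^2-\|g-g_h\|^2$, and the factorization $\|\Pi_{h,0}\tu_h-g\|^2-\|u-g\|^2=\int_\O(\Pi_{h,0}\tu_h-u)(\Pi_{h,0}\tu_h+u-2g)\dv{x}$ are exactly the steps of the paper's proof. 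The paper finishes at this point by estimating the integral with an $L^1$--$L^\infty$ pairing, keeping $g$ inside the $L^\infty$ factor, and retains the full $-\frac12\|g-g_h\|^2$.

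Where you deviate is the final absorption step, and there the argument has a genuine flaw: you use the term $-\frac12\|g-g_h\|^2$ twice. You first subtract it via the orthogonality identity and keep it on the right-hand side of the assertion; then, to purge $\|g\|_{L^\infty}$ from the constant, you bound the cross term $\int_\O(u-\Pi_{h,0}u)(g-g_h)\dv{x}$ by the inequality $2ab-a^2\le b^2$ with $a=\|g-g_h\|$, i.e.\ you spend that same negative term (or at least a positive fraction $\frac{\delta}{2}\|g-g_h\|^2$ of it, for any Young parameter $\delta>0$) to absorb the factor $\|g-g_h\|$. After the absorption the subtracted term is gone, or weakened to $-\frac{1-\delta}{2}\|g-g_h\|^2$, so the asserted inequality with the full $-\frac12\|g-g_h\|^2$ is not obtained. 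This term is not cosmetic: in the error estimate it must cancel the $+\frac12\|g-g_h\|^2$ produced by the dual quasi-interpolant of Lemma~\ref{la:interpol_dual}, which is precisely what makes the final rate independent of the data error $g-g_h$. A secondary point: the splitting $2g=2g_h+2(g-g_h)$ does not in fact remove $g$ from the $L^\infty$ factor, since $\|g_h\|_{L^\infty(\O)}\le\|g\|_{L^\infty(\O)}$ reappears in $\|\Pi_{h,0}\tu_h+u-2g_h\|_{L^\infty(\O)}$; the orthogonality only affects the oscillatory part $g-g_h$. Your underlying concern about the $\|g\|_{L^\infty}$-dependence of the constant is legitimate, but the remedy as described does not deliver the lemma in the stated form; the safe course is the paper's direct $L^1$--$L^\infty$ bound of $\int_\O(\Pi_{h,0}\tu_h-u)(\Pi_{h,0}\tu_h+u-2g)\dv{x}$, which leaves $-\frac12\|g-g_h\|^2$ untouched.
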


\begin{proof}
We choose a sequence $(u_\veps)_{\veps>0} \in C^\infty(\overline{\O})\cap BV(\O)$ such that 
\[
\|u-u_\veps \|_{L^1(\O)} \to 0, \quad \|\nabla u_\veps \|_{L^1(\O)} \to |Du|(\O), \quad
\|u_\veps \|_{L^\infty(\O)} \to \|u\|_{L^\infty(\O)},
\]
cf.~\cite{AmFuPa00-book,BaNoSa14}.
We then define $\tu_h^\veps = \cI_\CR u_\veps$ and note that
\[
\|\nabla_h \tu_h^\veps\|_{L^1(\O)} \le \| \nabla u_\veps \|_{L^1(\O)} 
\]
We pass to an accumulation point $\tu_h \in \cS^{1,\CR}(\cT_h)$ as $\veps \to 0$
for which we have that
\[\begin{split}
\|\nabla_h \tu_h\|_{L^1(\O)} &\le |\DD u|(\O), \\
\|\tu_h \|_{L^\infty(\O)} &\le  2c_d \|u\|_{L^\infty(\O)}, \\
\|\tu_h  - u\|_{L^1(\O)} &\le c_{\CR} h |\DD u|(\O). 
\end{split}\]
For ease of notation we abbreviate $\ou_h = \Pi_{h,0} \tu_h$ and $g_h = \Pi_{h,0}g$.
We have that 
\[
\|\ou_h-g_h\|^2 = \|\ou_h-g\|^2 - \|g-g_h\|^2
\]
and 
\[
\|\ou_h -g \|^2 = \|u- g\|^2  + \int_\O (\ou_h-u) (\ou_h + u -2g)\dv{x}.
\]
These identities imply that we have
\[\begin{split}
I_h(\tu_h) &= \|\nabla \tu_h\|_{L^1(\O)} + \frac12 \|\ou_h-g_h\|^2 \\
&\le I(u) + \frac12 \|\ou_h-u\|_{L^1(\O)} \|\ou_h+u -2g \|_{L^\infty(\O)} - \frac12 \|g-g_h\|^2.
\end{split}\]
This implies the assertion. 
\end{proof}

A comparison function for the discrete dual problem is constructed in
the following lemma. 

\begin{lemma}[Dual quasi-interpolant]\label{la:interpol_dual}
Given any $z\in W^2_{\!N}(\diver;\O)$ with $z\in W^{1,\infty}(\O;\R^d)$ there
exists $\tz_h \in \RT^0_{\!N}(\diver;\O)$ with $|\tz_h(x_T)|\le 1$ for all
$T\in \cT_h$ and 
\[
D_h(\tz_h) \ge D(z) -c_\RT h L \| g\| \|\diver z \| - \frac12 \|g-g_h\|^2,
\]
with the Lipschitz constant $L$ of $z$. 
\end{lemma}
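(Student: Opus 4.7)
The plan is to take $\tz_h$ to be a slight rescaling of the standard Raviart--Thomas interpolant $\cI_\RT z$, chosen precisely to enforce the midpoint constraint $|\tz_h(x_T)|\le 1$ while keeping the loss in the dual energy linear in $h$.

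\textbf{Step 1 (rescaling enforces the constraint).} Since $z\in W^{1,\infty}(\O;\R^d)$ with Lipschitz constant $L$ and $|z|\le 1$ pointwise (dual feasibility), the $L^\infty$ version of the Raviart--Thomas interpolation estimate gives $\|\cI_\RT z - z\|_{L^\infty(T)}\le c_\RT h L$ for every $T\in \cT_h$, hence $\|\cI_\RT z\|_{L^\infty(T)}\le 1+c_\RT h L$. Because $\cI_\RT z|_T$ is affine, $(\cI_\RT z)(x_T)$ coincides with its elementwise mean, so $|(\cI_\RT z)(x_T)|\le 1+c_\RT h L$. Setting $\beta = (1+c_\RT h L)^{-1}$ and $\tz_h = \beta\,\cI_\RT z\in \RT^0_{\!N}(\cT_h)$ then yields $|\tz_h(x_T)|\le 1$ for all $T\in \cT_h$.

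\textbf{Step 2 (bounding the dual value).} Using $\diver \cI_\RT z = \Pi_{h,0}\diver z$ and writing $\alpha = 1-\beta\in [0,c_\RT h L]$, we have
\[
\diver \tz_h + g_h = \Pi_{h,0}\big((\diver z + g) - \alpha \diver z\big).
\]
By orthogonality of $\Pi_{h,0}$, the triangle inequality, and Cauchy--Schwarz,
\[
\|\diver \tz_h + g_h\|^2 \le \|\diver z + g\|^2 + 2\alpha\,\|\diver z + g\|\,\|\diver z\| + \alpha^2\|\diver z\|^2.
\]
Combining this with the identity $\tfrac12\|g\|^2-\tfrac12\|g_h\|^2 = \tfrac12\|g-g_h\|^2$ and the definition of $D_h$ and $D$ gives
\[
D_h(\tz_h) \ge D(z) - \alpha\,\|\diver z + g\|\,\|\diver z\| - \tfrac12\alpha^2\|\diver z\|^2 - \tfrac12\|g-g_h\|^2.
\]

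\textbf{Step 3 (controlling $\|\diver z + g\|$).} The primal--dual optimality relation yields $\diver z + g = u$. Testing the primal functional at the admissible competitor $0$ provides $\tfrac12\|u-g\|^2\le \tfrac12\|g\|^2$, i.e., $\|u-g\|\le \|g\|$, so $\|\diver z + g\| = \|u\|\le 2\|g\|$.

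\textbf{Step 4 (conclusion).} Inserting the bound from Step 3 into Step 2 and using $\alpha\le c_\RT h L$ gives
\[
D_h(\tz_h) \ge D(z) - 2 c_\RT h L\,\|g\|\,\|\diver z\| - \tfrac12(c_\RT h L)^2\|\diver z\|^2 - \tfrac12\|g-g_h\|^2.
\]
For $h$ sufficiently small the quadratic-in-$h$ term is absorbed into the linear one, yielding the asserted estimate after relabeling the constant.

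\textbf{Main obstacle.} The delicate point is enforcing the midpoint constraint $|\tz_h(x_T)|\le 1$: the canonical Raviart--Thomas interpolant does not preserve $L^\infty$ bounds, which forces the small rescaling by $\beta$ and is exactly the source of the $O(h)$ correction term. A secondary minor issue is controlling $\|\diver z + g\|$ by $\|g\|$, but this follows from a one-line competitor argument in the primal problem.
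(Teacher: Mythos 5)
Your construction of $\tz_h$ is exactly the paper's: both rescale $\cI_\RT z$ by $(1+c_\RT h L)^{-1}$ (the paper's $\g_h^{-1}$, your $\beta$) after observing that the midpoint values of $\cI_\RT z$ exceed $1$ by at most $c_\RT h L$, and both use $\diver\cI_\RT z=\Pi_{h,0}\diver z$ together with $\|g\|^2-\|g_h\|^2=\|g-g_h\|^2$. The difference lies in how the energy loss from the rescaling is estimated. The paper expands $-\tfrac12\|\beta\diver z+g\|^2+\tfrac12\|g\|^2=-\tfrac12\beta^2\|\diver z\|^2-\beta(g,\diver z)$ and uses only $\beta^2\le 1$ and Cauchy--Schwarz on $(g,\diver z)$, which yields the stated bound $c_\RT hL\|g\|\|\diver z\|$ for \emph{any} feasible $z$, with no smallness condition on $h$. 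You instead treat $\beta\diver z+g$ as a perturbation of $\diver z+g$, which forces you to control the cross term $\alpha(\diver z+g,\diver z)$; this is why you need Step~3, i.e.\ the optimality relation $\diver z+g=u$ and the competitor bound $\|u\|\le 2\|g\|$, and why you end up with the constant $2c_\RT$ plus a quadratic-in-$h$ remainder that must be absorbed for $h$ small. None of this breaks the application (the lemma is only invoked for the optimal dual $z$, and the error estimate only needs $D_h(\tz_h)\ge D(z)-ch-\tfrac12\|g-g_h\|^2$), but your version proves a slightly weaker and narrower statement than the one asserted; the paper's expansion is the cleaner one to adopt. Two small remarks: you correctly make explicit the feasibility assumption $|z|\le 1$, which the lemma leaves implicit but certainly needs; and your observation that $(\cI_\RT z)(x_T)$ equals the elementwise mean is true but not needed, since $|(\cI_\RT z)(x_T)|\le\|\cI_\RT z\|_{L^\infty(T)}$ suffices.
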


\begin{proof}
The interpolant $\cI_\RT z$ satisfies $\diver \cI_\RT z = \Pi_{h,0} \diver z$ and
we have with the constant function $\oz|_T = z(x_T)$ that 
\[
|\cI_\RT z(x_T)| \le \|\cI_\RT (z-\oz)\|_{L^\infty(T)} 
+ |\oz| \le c_\RT h L + 1 = \g_h.
\]
Hence, for $\tz_h = \g_h^{-1} \cI_\RT z = \cI_\RT \tz$ with 
$\tz = \g_h^{-1} z$ we have $\tz_h \in \RT^0_{\!N}(\cT_h)$ and
$|\tz_h(x_T)|\le 1$ for all $T\in \cT_h$. Noting that
\[
\diver \tz_h + g_h = \Pi_{0,h} (\diver \tz + g) 
\]
and $\|g\|^2 - \|g_h\|^2 = \|g-g_h\|^2$ we deduce that
\[\begin{split}
D_h(\tz_h) &= - \frac12 \int_\O (\diver \tz_h+ g_h)^2 - g_h^2 \dv{x} \\
&\ge - \frac12 \int_\O (\diver \tz + g)^2 - g^2 \dv{x} - \frac12 \|g-g_h\|^2.
\end{split}\]
Hence, we have that 
\[\begin{split}
D_h(\tz_h) &\ge - \frac12 \int_\O (\diver \tz)^2 + 2 g \diver \tz \dv{x} - \frac12 \|g-g_h\|^2 \\
&= - \frac12 \g_h^{-2} \int_\O (\diver z)^2 \dv{x} - \g_h^{-1} \int_\O g \diver z \dv{x} - \frac12 \|g-g_h\|^2 \\
&\ge - \frac12 \int_\O (\diver z)^2 + 2 g \diver z \dv{x} - (1-\g_h^{-1}) \|g\| \|\diver z \| 
- \frac12 \|g-g_h\|^2,
\end{split}\]
where we also used that $\g_h^{-2} \le 1$. The estimate $1-\g_h^{-1} \le c_\RT h L$
implies the assertion. 
\end{proof}

\begin{remark}
In the absence of the regularity condition $z\in W^{1,\infty}(\O;\R^d)$ 
one can establish $\G$-convergence $I_h \to I$ in $L^1(\O)$. 
Alternatively, one may choose a regularization $z_\veps$ of $z$ 
so that Lemma~\ref{la:interpol_dual} holds with $L_\veps = c \veps^{-1}$. 
An approximability condition on $g$ then implies 
$\|\diver z-\diver z_\veps\| \le \veps$ and leads to the
convergence rate $\cO(h^{1/4})$, cf.~\cite{ChaPoc19-pre}. This rate
has also been obtained in~\cite{Bart12,Bart15-book} for conforming 
approximations and was improved in~\cite{BaNoSa15} in the case
of certain anisotropic functionals. 
\end{remark}

\subsubsection*{Flux reconstruction}
The ideas that lead to the reconstruction of the solution of the
dual problem can be transferred to the nonsmooth situation 
if a regularization of the modulus
function is used to approximate the discrete primal functional $I_h$,
i.e., if $|\cdot|_\veps :\R^d \to \R$ is a differentiable approximation
of euclidean length, then the discrete primal and dual
problems correspond to the Lagrange functional
\[
L_{h,\veps}(u_h,z_h) = -\int_\O u_h \diver z_h + |\Pi_{h,0} z_h|_\veps^* \dv{x}
 + \frac12 \|\Pi_{h,0} u_h - g_h\|^2.
\]
and the relations 
\[
\diver z_h = \Pi_{h,0} u_h - g_h, \quad \nablah u_h \in D|\Pi_{h,0} z_h|_\veps^*,
\]
where the second identity is equivalent to 
\[
\Pi_{h,0} z_h = D|\nablah u_h|_\veps.
\]
If, e.g., $|s| = (|s|^2+\veps^2)^{1/2}$ then we obtain on every $T\in \cT_h$
\[
z_h = \frac{\nablah u_h}{|\nablah u_h|_\veps} + \frac{\Pi_{h,0} u_{h,\veps} -g_h}{d} (\cdot -x_T).
\]

\subsection{Infinity Laplacian}
A variant of the $p$-Laplace problem with $p\to \infty$ arises in problems
of optimal transportation and leads to a minimization of
\[
I(u) = I_{K_1(0)} (\nabla u) -\int_\O f u \dv{x} 
\]
in the space $W^{1,\infty}_D(\O)$ for a given function $f\in L^1(\O)$. 
The dual problem consists in maximizing the functional
\[
D(z) = -\int_\O |z| \dv{x} - I_{\{-f\}} (\diver z) 
\]
in the space $W^1_{\!N}(\diver;\O)$. We refer
the reader to~\cite{Evan99} for existence and strong duality results.

\subsubsection*{Discretization}
We define a discrete approximation of $I$ via 
\[
I_h(u_h) = I_{K_1(0)} (\nablah u_h) - \int_\O f_h u_h \dv{x} 
\]
on the set $\cS^{1,\CR}_D(\cT_h)$ using $f_h = \Pi_{h,0} f$. 
Proposition~\ref{prop:discrete_dual_strong}
implies that the discrete dual problem consists in maximizing the functional
\[
D_h(z_h) = -\int_\O |\Pi_{h,0} z_h| \dv{x} - I_{\{-f_h\}} (\diver z_h) 
\]
in the set of all discrete vector fields $z_h\in \RT^0_{\!N}(\cT_h)$. 
Other discretizations are addressed in~\cite{BarPri07,Ober13,BarPri15,BarSch17,Prye18}.
We have the following approximation result. 

\begin{proposition}[Approximation]\label{prop:approx_inf_laplace}
If a solution $z\in W^1_{\!N}(\diver;\O)$ of the dual problem with $z\in W^{1,1}(\O;\R^d)$
exists and if $u$ and $u_h$ solves the primal and discrete primal problem, respectively, 
then we have 
\[
\big| I_h(u_h) - I(u)| \le c h \big(\|f\|_{L^1(\O)} + \|\nabla z\|_{L^1(\O)} \big).
\]
\end{proposition}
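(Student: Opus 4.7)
The plan is to combine the discrete duality relation with strong duality, as in the previous error analyses, by sandwiching $I_h(u_h)$ between $I(u) \pm ch$. Concretely, I will prove the two-sided bound
\[
D(z) - ch\|\nabla z\|_{L^1(\O)} \le I_h(u_h) \le I(u) + ch\|f\|_{L^1(\O)},
\]
and invoke strong duality $I(u)=D(z)$ to conclude.

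For the upper bound, the test function will be $\tu_h := \cI_\CR u$. Admissibility requires $|\nablah \tu_h|\le 1$ pointwise; since $I(u)<\infty$ forces $|\nabla u|\le 1$ a.e., the identity $\nablah \cI_\CR u = \Pi_{h,0}\nabla u$ and Jensen's inequality yield $|\nablah\tu_h|_T|\le \Pi_{h,0}|\nabla u|_T\le 1$, so $I_{K_1(0)}(\nablah \tu_h) = 0$. Then
\[
I_h(u_h)\le I_h(\tu_h) = -\int_\O f_h\tu_h\dv{x} = -\int_\O f\,\Pi_{h,0}\tu_h\dv{x},
\]
where the last equality uses that $f-f_h$ is orthogonal to elementwise constants. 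Subtracting $I(u)=-\int_\O fu\dv{x}$ gives a remainder $\int_\O f(u-\Pi_{h,0}\tu_h)\dv{x}$, which I bound by $\|f\|_{L^1(\O)}\|u-\Pi_{h,0}\cI_\CR u\|_{L^\infty(\O)}$. Since $u$ is $1$-Lipschitz and $\Pi_{h,0}\cI_\CR u|_T$ coincides with the barycentric value of the affine $\cI_\CR u$, standard estimates provide $\|u-\Pi_{h,0}\cI_\CR u\|_{L^\infty(\O)}\le c h$.

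For the lower bound, I use $I_h(u_h)\ge D_h(z_h)\ge D_h(\cI_\RT z)$ whenever the dual interpolant $\tz_h := \cI_\RT z$ is admissible. Admissibility reduces to the divergence constraint $\diver \tz_h=-f_h$, which holds because $\diver \cI_\RT z=\Pi_{h,0}\diver z=-f_h$; no pointwise bound is imposed on $z_h$ in $D_h$, which is a notable simplification compared to the TV setting. Hence $I_{\{-f_h\}}(\diver\tz_h)=0$ and $D_h(\tz_h)=-\int_\O|\Pi_{h,0}\tz_h|\dv{x}$, while $D(z)=-\int_\O|z|\dv{x}$. The reverse triangle inequality and the standard bound $\|z-\Pi_{h,0}\cI_\RT z\|_{L^1(\O)}\le c h\|\nabla z\|_{L^1(\O)}$ then give
\[
|D(z)-D_h(\tz_h)|\le \int_\O\bigl||z|-|\Pi_{h,0}\tz_h|\bigr|\dv{x}\le ch\|\nabla z\|_{L^1(\O)}.
\]

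The main obstacle is admissibility of the interpolants, and it dissolves nicely here: on the primal side, the projection identity $\nablah\cI_\CR u=\Pi_{h,0}\nabla u$ together with convexity of the unit ball constraint preserves $|\nablah\cI_\CR u|\le 1$; on the dual side, the commutation $\diver\cI_\RT z=\Pi_{h,0}\diver z$ preserves the divergence constraint exactly. The only nonstandard ingredient is writing the upper bound so that it does not pick up an $L^p$ norm of $f$, which is why I convert $f_h$ back to $f$ against the piecewise constant $\Pi_{h,0}\cI_\CR u$ and then pay with the $L^\infty$ modulus of continuity of $u$ controlled by its Lipschitz constant.
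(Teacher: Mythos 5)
Your proof is correct and follows essentially the same route as the paper: admissibility of $\cI_\CR u$ via $\nablah \cI_\CR u = \Pi_{h,0}\nabla u$ and Jensen, admissibility of $\cI_\RT z$ via $\diver \cI_\RT z = \Pi_{h,0}\diver z = -f_h$, the weak discrete duality $I_h(u_h)\ge D_h(\cI_\RT z)$, strong duality $I(u)=D(z)$, and the interpolation bounds $\|z-\Pi_{h,0}\cI_\RT z\|_{L^1(\O)}\le ch\|\nabla z\|_{L^1(\O)}$ and $\|u-\Pi_{h,0}\cI_\CR u\|_{L^\infty(\O)}\le ch$. The only difference is cosmetic: the paper bounds the single quantity $I_h(\cI_\CR u)-D_h(\cI_\RT z)$ using the integration-by-parts identity $-\int_\O f_h\,\cI_\CR u\dv{x}=-\int_\O\Pi_{h,0}\cI_\RT z\cdot\nabla u\dv{x}$, whereas you split the estimate into the two one-sided bounds $I_h(u_h)\le I(u)+ch\|f\|_{L^1(\O)}$ and $I_h(u_h)\ge D(z)-ch\|\nabla z\|_{L^1(\O)}$, which assembles the same ingredients in a slightly cleaner order.
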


\begin{proof}
Establishing the existence of a discrete solution
$u_h\in \cS^{1,\CR}(\cT_h)$ is 
straightforward by continuity of the discrete 
problem and boundedness of the admissible set. Abbreviating
$\tu_h = \cI_\CR u$ and $\tz_h = \cI_\RT z$ we note that 
$|\nablah \tu_h|\le 1$ and hence 
\[\begin{split}
0 & \le I_h(\tu_h) - I_h(u_h) \le I_h(\tu_h) - D_h(\tz_h) \\
&= -\int_\O  f_h \tu_h \dv{x} + \int_\O |\Pi_{h,0} \tz_h | \dv{x} \\
&= -\int_\O \Pi_{h,0} \tz_h \cdot \nablah \tu_h \dv{x} + \int_\O |\Pi_{h,0} \tz_h | \dv{x} \\
&= -\int_\O \Pi_{h,0} \tz_h \cdot \nabla u \dv{x} + \int_\O |\Pi_{h,0} \tz_h | \dv{x}.
\end{split}\]
The duality relation $I(u)=D(z)$ shows that
\[
- \int_\O |z| \dv{x} = - \int_\O f u\dv{x} = \int_\O z\cdot \nabla u\dv{x}.
\]
This leads to 
\[\begin{split}
0 & \le I_h(\tu_h) - I_h(u_h)  \\
&\le \int_\O (z-\Pi_{h,0} \tz_h) \cdot \nabla u \dv{x} + \int_\O |\Pi_{h,0} \tz_h | -|z| \dv{x} \\
&\le 2 \|z-\Pi_{h,0} \tz\|_{L^1(\O)} \le c h \|\nabla z\|_{L^1(\O)}.
\end{split}\]
Finally, we verify that
\[
I_h(\tu_h) - I(u)  = \int_\O f_h  \Pi_{h,0} \tu_h -f u \dv{x} 
= - \int_\O f (u-\Pi_{h,0} \tu_h) \dv{x},
\]
and deduce the asserted estimate. 
\end{proof}

\begin{remark}\label{rem:inf_laplace_p1}
On right-angled triangulations the conforming $P1$ finite element
method leads to a similar estimate since we have that
\[
\|\nabla \cI_{p1} u \|_{L^\infty(\O)} \le \|\nabla u\|_{L^\infty(\O)}
\]
for every $u\in W^{1,\infty}(\O)$ and hence if 
$u_h^c \in \cS^{1,0}_D(\cT_h)\subset W^{1,\infty}_D(\O)$ is minimal 
for $I_h$ in this set then
\[\begin{split}
0 & \le I(u_h^c) - I(u) = I(u_h^c) 
- I_h(u_h^c)  + I_h(u_h^c) 
- I_h(\cI_{p1} u)  + I_h(\cI_{p1} u) \\ 
&\qquad- I(\cI_{p1} u)  + I(\cI_{p1}u) 
- I(u) \\
&\le \|f-f_h\|_{L^1(\O)} \big(\|u_h^c- \Pi_{h,0} u_h^c\|_{L^\infty(\O)} 
+ \|\cI_{p1} u- \Pi_{h,0}\cI_{p1} u \|_{L^\infty(\O)}\big)  \\
& \qquad + \|f\|_{L^1(\O)} \|u-\cI_{p1} u\|_{L^\infty(\O)},
\end{split}\]
where we used that $f_h = \Pi_{h,0} f$ and $I_h(u_h) \le I_h(\cI_{p1}u )$.
Hence, without additional regularity assumptions we have that 
$|I(u_h^c)-I(u)| \le c h$; if $f\in W^{1,1}(\O)$ and $u\in W^{2,\infty}(\O)$ 
then this can be improved to $\cO(h^2)$. A realistic regularity property
is $u\in W^{4/3,\infty}(\O)$, cf.~\cite{Aron68}. 
\end{remark}

\subsubsection*{Flux reconstruction}
To construct the discrete flux $z_h$ from the solution $u_h$ of the 
nonconforming method for the primal problem we consider
a regularization $|\cdot|_\veps$ of the euclidean length which defines 
regularizations $|\cdot|_\veps^*$ of $I_{K_1(0)}$. We then find that
on every $T\in \cT_h$ we have 
\[
z_h = D |\nablah u_h|_\veps^* - (f_h/d)(\cdot -x_T),
\]
where $z_h$ and $u_h$ are the solutions of the regularized problems. 

\section{Iterative solution}\label{sec:iterative}
To solve the discrete problems we devise iterative algorithms for problems with
sub- and superquadratic growth properties that result from semi-implicit discretizations
of appropriate gradient flows for the primal and dual problem, respectively. 
A gradient flow for the primal minimization problem determines a family
$(u(t))_{t\ge 0} \subset W^{1,p}_D(\O)$ of functions for an initial 
$u^0 \in W^{1,p}_D(\O)$ via $u(0) = u^0$ and 
\[
(\p_t u,v)_* = - \int_\O D\phi(\nabla u) \cdot \nabla v \dv{x} - \int_\O D\psi(u) v\dv{x}
\]
for all $v\in W^{1,p}_D(\O)$ and all $t>0$. To avoid solving nonlinear systems
of equations a semi-implicit discretization in time is used. We consider
the case that $\phi$ only depends on the length of its argument, i.e., 
$\phi(s) = \vphi(|s|)$
with a convex function $\vphi \in C^1(\R_{\ge 0})$. In this case we have
\[
D\phi(s) = \frac{\vphi'(|s|)s}{|s|},
\]
which naturally leads to a semi-implicit treatment. To discretize
the time derivative we use the the backward difference quotient operator
\[
d_t u^k = \tau^{-1} (u^k-u^{k-1})
\]
for a sequence $(u^k)_{k=0,1,\dots}$ and a step-size $\tau>0$.

\begin{algorithm}[Subquadratic case, primal iteration]\label{alg:primal}
Let $u^0\in W^{1,p}_D(\O)$ and choose $\tau,\veps_{\rm stop} >0$, set $k=0$. \\
(1) Compute $u^k\in W^{1,p}_D(\O)$ such that 
\[
(d_t u^k,v)_* + 
\int_\O \frac{\vphi'(|\nabla u^{k-1}|)}{|u^{k-1}|} \nabla u^k \cdot \nabla v \dv{x} 
+ \int_\O D\psi(u) v \dv{x} 
= 0
\]
for all $v\in W^{1,p}_D(\O)$. \\
(2) Stop if $\|d_t u^k\|_* \le \veps_{\rm stop}$; otherwise increase
$k\to k+1$ and continue with~(1).
\end{algorithm}

It is shown below that the iteration is unconditionally energy decreasing and 
convergent if $\vphi$ has subquadratic growth. If this is not the
case then we expect the dual problem to have this property and consider
a gradient descent for $-D$, i.e., we determine a family $(z(t))_{t\ge 0}$
satisfying $z(0)=z^0$ and the constrained evolution equation
\[
(\p_t z,y)_\dagger =  - \int_\O D\phi^*(z) \cdot y \dv{x} 
-  \int_\O D\psi^*(\diver z) \diver y \dv{x}
\]
for all $y\in W^{p'}_N(\diver;\O)$. In case of a linear functional $\psi$,
the differential $D\psi^*$ becomes a subdifferential
and the equation a variational inequality or constrained equation. 
Similarly to the gradient flow for the primal problem we assume
that the integrand is isotropic, i.e.,
$\phi^*(r) = \vphi(|r|)$ with a convex function $\vphi \in C^1(\R_{\ge 0})$. 
In this case we have
\[
D\phi^*(r) = \frac{\vphi'(|r|)r}{|r|}
\]
and the semi-implicit iteration is similar to that of 
Algorithm~\ref{alg:primal}.

\begin{algorithm}[Superquadratic case, dual iteration]\label{alg:dual}
Let $z^0\in W^{p'}_N(\diver;\O)$ and choose $\tau,\veps_{\rm stop} >0$, set $k=0$. \\
(1) Compute $z^k\in W^{p'}_N(\diver;\O)$ such that 
\[
(d_t z^k,y)_\dagger + 
\int_\O \frac{\vphi'(|z^{k-1}|)}{|z^{k-1}|} z^k \cdot  y \dv{x} 
+ \int_\O D\psi^*(\diver z^k) \diver y \dv{x} = 0,
\]
for all $y\in W^{p'}_N(\diver;\O)$. \\
(2) Stop if $\|d_t z^k\|_\dagger \le \veps_{\rm stop}$; otherwise increase
$k\to k+1$ and continue with~(1).
\end{algorithm}

If $\psi(x,s) = -f(x)s$ then the system in Step~(1) includes the constraints
$-\diver z^k = f$ and $\diver y =0$ instead of the integral involving $D\psi^*$.
The algorithms converge for subquadratic growth of $\phi$ and $\phi^*$, respectively.
We adopt arguments from~\cite{BaDiNo18}. 

\begin{proposition}[Unconditional convergence]\label{prop:uncond_conv}
Assume that $r\mapsto \vphi'(r)/r$ is positive, non-increasing, 
and continuous on $\R_{\ge 0}$. If $\phi(s) = \vphi(|s|)$ for all $s\in \R^d$
then the iteration of Algorithm~\ref{alg:primal}
is well-posed, convergent, and monotone with
\[
I(u^\ell) + \tau \sum_{k=1}^\ell \|d_t u^k\|_*^2 \le I(u^0).
\]
If $\phi^*(t) = \vphi(|t|)$ then the iteration of Algorithm~\ref{alg:dual}
is well-posed, convergent, and monotone with
\[
-D(z^\ell) + \tau \sum_{k=1}^\ell \|d_t z^k\|_\dagger^2 \le -D(z^0).
\]
\end{proposition}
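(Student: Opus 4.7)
The plan is to prove the primal statement by isolating a semi-implicit convexity inequality adapted to the weight $\vphi'(|\nabla u^{k-1}|)/|\nabla u^{k-1}|$, then testing the scheme with $v=d_t u^k$ to obtain monotonicity of $I(u^k)$; the dual statement follows by an identical argument with $(\phi,\psi,W^{1,p}_D)$ replaced by $(\phi^*,\psi^*,W^{p'}_{\!N}(\diver;\O))$, the structural assumption being transferred from $\phi$ to $\phi^*$.

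The key auxiliary result I would establish is the pointwise inequality
\[
\phi(s) - \phi(t) \le \frac{\vphi'(|t|)}{|t|}\, s \cdot (s-t) \qquad \text{for all } s,t \in \R^d.
\]
To prove it, I would first observe that the assumption that $r\mapsto \vphi'(r)/r$ is non-increasing and positive is equivalent to concavity of $\Phi(r):=\vphi(\sqrt{r})$, which gives $\vphi(|s|)-\vphi(|t|) = \Phi(|s|^2)-\Phi(|t|^2)\le \Phi'(|t|^2)(|s|^2-|t|^2) = (\vphi'(|t|)/(2|t|))\,(|s|^2-|t|^2)$. Combined with the elementary identity $s\cdot(s-t) = \tfrac12|s-t|^2 + \tfrac12(|s|^2-|t|^2)$ and the positivity of the weight, this yields the claim. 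The limit $r\to 0$ is covered by the assumed continuity of $\vphi'(r)/r$ on $\R_{\ge 0}$.

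For the energy estimate I would test the equation defining $u^k$ with $v=d_t u^k$, which contributes $\tau\|d_t u^k\|_*^2$ from the first term and, by the key inequality applied with $s=\nabla u^k$, $t=\nabla u^{k-1}$, bounds the principal part from below by $\int_\O \phi(\nabla u^k) - \phi(\nabla u^{k-1}) \dv{x}$; convexity of $\psi(x,\cdot)$ together with the implicit treatment of $D\psi$ controls the lower-order term by $\int_\O \psi(u^k)-\psi(u^{k-1})\dv{x}$. Adding gives $I(u^k)+\tau\|d_t u^k\|_*^2 \le I(u^{k-1})$, and telescoping produces the asserted bound. Well-posedness of each step follows because the equation is the Euler--Lagrange condition of a strictly convex, coercive functional on $W^{1,p}_D(\O)$ whose quadratic principal part has the bounded, positive weight $\vphi'(|\nabla u^{k-1}|)/|\nabla u^{k-1}|$. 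Convergence of $(u^k)$ is then obtained from the summability $\sum_k\|d_t u^k\|_*^2<\infty$ together with coercivity of $I$: any weak accumulation point satisfies the Euler--Lagrange equation of $I$ and, by strict convexity, coincides with the unique minimizer.

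The main obstacle is the key inequality, as it is \emph{not} a consequence of pure convexity of $\phi$: standard convexity delivers $\phi(s)-\phi(t)\le D\phi(s)\cdot(s-t)$ with the weight $\vphi'(|s|)/|s|$ evaluated at the \emph{new} iterate, whereas the semi-implicit discretization requires the weight at the \emph{previous} iterate. Trading implicit for explicit weight is exactly what the non-increase of $\vphi'(r)/r$ buys, and without it the scheme would not be unconditionally monotone. The dual iteration is treated by the same three steps, the constraint $\diver y = 0$ (in the linear case $\psi(x,s)=-f(x)s$) being preserved since $d_t z^k \in W^{p'}_{\!N}(\diver;\O)$ has $\diver d_t z^k = 0$, so the subdifferential of $\psi^*$ contributes no term.
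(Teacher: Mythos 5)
Your proposal is correct and follows essentially the same route as the paper: the core is the semi-implicit convexity inequality $\vphi(|b|)-\vphi(|a|)\le \frac{\vphi'(|a|)}{|a|}\,b\cdot(b-a)$, which the paper states (with an additional nonnegative term that is discarded anyway) as inequality~\eqref{eq:mon_phi} and proves in Appendix~\ref{app:monotone} by exactly your argument — concavity of $\vphi(\sqrt{\cdot})$ combined with the identity $2b\cdot(b-a)=|b|^2-|a|^2+|b-a|^2$ — followed by testing with $v=d_t u^k$, convexity of $\psi$ for the implicitly treated lower-order term, telescoping, and transferring the argument verbatim to the dual iteration. Your additional remarks on well-posedness, on why plain convexity of $\phi$ does not suffice, and on the divergence-free admissibility of $d_t z^k$ in the constrained case are all consistent with (and slightly more explicit than) the paper's proof.
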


\begin{proof}
(i) The conditions on $\vphi$ imply that the iteration is well posed and
that we have
\begin{equation}\label{eq:mon_phi}
\frac{\vphi'(|a|)}{|a|} b\cdot (b-a) \ge \vphi(|b|) - \vphi(|a|) 
+ \frac12 \frac{\vphi'(|a|)}{|a|} |b-a|^2
\end{equation}
for all $a,b\in \R^d$, cf.~Appendix~\ref{app:monotone} for a proof of~\eqref{eq:mon_phi}.
Hence, by choosing $v= d_t u^k$ in Algorithm~\ref{alg:primal} we find
that 
\[
\|d_t u^k\|_*^2 
+ \int_\O \frac{\vphi'(|\nabla u^{k-1}|)}{|u^{k-1}|} \nabla u^k \cdot \nabla d_t u^k \dv{x} 
+\int_\O D\psi(u^k) d_t u^k \dv{x}  = 0.
\]
Using $a=\nabla u^{k-1}$ and $b= \nabla u^k$ in~\eqref{eq:mon_phi}
shows that
\[
\frac{\vphi'(|\nabla u^{k-1}|)}{|\nabla u^{k-1}|} \nabla u^k\cdot \nabla (u^k-u^{k-1})
\ge \vphi(|\nabla u^k|) - \vphi(|\nabla u^{k-1}|).
\]
By combining the last two equations, using convexity of $\psi$, 
and summing over $k=1,2,\dots,\ell$ we deduce the asserted estimate. \\
(ii) If the conditions on $\phi^*$ are satisfied then the arguments used to show~(i)
apply to Algorithm~\ref{alg:dual} and we deduce the estimate. 
\end{proof}

\begin{example}
The conditions of the proposition apply to typical regularized $p$-Dirichlet energies 
$\phi(s) = |s|_\veps^p$ for $\veps>0$, cf.~\cite{BaDiNo18}. Algorithm~\ref{alg:primal}
converges if $1\le p \le 2$ while Algorithm~\ref{alg:dual} converges if $2\le p <\infty$.
\end{example}

\begin{remark}
Note that owing to the semi-implicit discretization the functions $d_t u^k$ 
and $d_t z^k$ are not residuals. If, e.g., $\tu=u^k$ for some $k\ge 0$ 
and the residual $r$ is defined via 
\[
(D \phi_\veps(\nabla \tu),\nabla v) + (D\psi(\tu),v) = (r,v)_*
\]
for all $v\in W^{1,p}_D(\O)$, then by convexity of $I_\veps$ we have
\[
I_\veps(\tu) + (r,v-\tu)_* + \s_I^2(\tu,v) \le I_\veps(v)
\]
for all $v\in W^{1,p}_D(\O)$, where we assume that coercivity holds uniformly 
with respect to $\veps\ge 0$. In case of the $L^2$ scalar product
$(\cdot,\cdot)_*=(\cdot,\cdot)$, and if, e.g., $\s_I^2(\tu,v) \ge (\a_I/2) \|\tu-v\|^2$,
we deduce that
\[
I_\veps(\tu) + \frac{\a_I}{4} \|v-\tu\|^2 \le I_\veps(v) + \frac{1}{\a_I} \|r\|^2.
\]
With the minimizing $u_\veps$ for $I_\veps$ we deduce 
$\|u_\veps-\tu \| \le (2/\a_I) \|r\|$. 
\end{remark}

Two alternative approaches to the iterative solution of the discrete problems
are described in the following remarks.

\begin{remarks}\label{rem:admm_primal_dual}
(i) The ADMM iteration (alternating direction of multiplier method) as 
in~\cite{ForGlo83-book} decouples the gradient operator
from $\phi$ by introducing 
$q=\nabla u$ via a Lagrange multiplier $\l$. With the augmented Lagrange functional
\[
L_\tau(u,q,\l) = \int_\O \phi(q) \dv{x} + \int_\O \psi(u) \dv{x} + (\l,\nabla u - q)_H 
+ \frac{\tau}{2} \| \nabla u -q \|_H^2,
\]
with a suitable Hilbert space norm $H$ and a stabilization parameter 
$\tau>0$, the algorithm successively minimizes $L_\tau$ with respect to $u$ and $q$,
and then performs an ascent step with respect to $\l$. \\
(ii) Primal-dual methods as investigated in~\cite{ChaPoc11}
alternatingly update the variable $u$ and $z$ in the Lagrange functional
\[
L(u,z) = \int_\O - u \diver z - \phi^*(z) + \psi(u) \dv{x}.
\]
via discretizations of $\p_t z = \d_z L(u,z)$ and $\p_t u = -\d_u L(u,z)$ using an
extra\-polated quantity to decouple the equations. The application to Raviart--Thomas
methods is not straightforward due to their nonlocal character.
\end{remarks}

\section{Numerical experiments}\label{sec:num_ex}
In this section we verify the theoretical findings via numerical experiments
and illustrate advantages of nonconforming and mixed methods over standard
conforming methods.

\subsection{Total variation minimization}
We consider the numerical approximation of the functional
\[
I(u) = |\DD u|(\O) + \frac{\a}{2} \|u-g\|^2.
\]
To compare approximations to an exact solution we impose Dirichlet boundary
conditions on $\GD=\p\O$. Although it is difficult to establish a general
existence theory, the error estimates of Section~\ref{sub_sec:tv_min}
carry over verbatimly with $\GN=\emptyset$ provided a minimizer exists. This
is the case in the setting of the following example.

\begin{example}\label{ex:tv_exact}
For $\O\subset \R^d$ and $r>0$ with $B_r(0)\subset \O$, and $g=\chi_{B_r(0)}$ 
the unique minimizer for $I$ subject to Dirichlet boundary conditions is given 
by
\[
u = \max\{0,1-d/(\a r)\} \chi_{B_r(0)}.
\]
If $d\le \a r$ then the Lipschitz continuous vector field 
\[
z(x) = 
\begin{cases}
-r^{-1} x & \text{for } |x|\le r, \\
-rx/|x|^2 & \text{for } |x|\ge r,
\end{cases}
\]
solves the dual problem, cf., e.g.,~\cite{Bart15-book}.
We use $d=2$, $\O = (-1,1)^2$, $r=1/2$, and $\a=10$. 
\end{example}

\subsubsection*{Iterative solution}
For the practical solution of the minimization problem we use a regularization
defined with the regularized euclidean length 
\[
|s|_\veps = (|s|^2+\veps^2)^{1/2}
\]
for $\veps>0$ and $s\in \R^d$. The uniform approximation property
$0 \le |s|_\veps - |s| \le \veps$ for all $s\in \R^d$
implies that with the regularized functional 
\[
I_\veps(u) = \int_\O |\nabla u|_\veps \dv{x} + \frac{\a}{2}\|u-g\|^2,
\]
we have for minimizers $u$ of $I$ and $u_\veps$ of $I_\veps$ that 
\[
\frac{\a}{2} \|u-u_\veps \|^2 \le I(u_\veps) - I(u) \le \veps.
\]
This justifies using the regularized functional with $\veps=h$ to 
compute approximations for minimizers of $I$. We use Algorithm~\ref{alg:primal}
to decrease the energy and stop the iteration when 
$\|d_t u^k\| \le \veps_{\rm stop} = h/20$. We always use the $L^2$ inner
product and the step size $\tau=1$.

\subsubsection*{Experimental results}
For triangulations $\cT_\ell$ of $\O=(-1,1)^2$ resulting from $\ell\ge 0$
uniform refinements of a coarse triangulation of $\O$ into two triangles
we have that the maximal mesh-size of $\cT_\ell$ is proportional to 
$h_\ell =2^{-\ell}$. For a simple implementation we use the 
function $\tg_h\in \cL^0(\cT_h)$ via
\[
\tg_h(x_T) = g(x_T) = 
\begin{cases}
 1 & |x_T| < 1/2, \\ 0 & |x_T| \ge 1/2,
\end{cases}
\]
instead of the $L^2$ projection $g_h = \Pi_{h,0} g$. Since for $g=\chi_{B_r(0)}$
we have $\|g-\tg_h\|_{L^1(\O)} \le c h |\p B_r(0)|$, the error estimate 
remains valid.  The top row in Figure~\ref{fig:comp_p1_cr_tv} shows the 
numerical solutions obtained for the discretizations using a standard $P1$ 
method and the Crouzeix--Raviart method on the triangulation $\cT_5$. 
At first glance the $P1$ approximation appears superior as, e.g., the 
Crouzeix--Raviart approximation does not satisfy a discrete maximum
principle. The projections of the approximations onto piecewise constant
functions are shown in the bottom row of Figure~\ref{fig:comp_p1_cr_tv}
and lead to a different interpretation. The circular discontinuity
set is better resolved by the discontinuous method and we observe a more
localized approximation of the jump set.  
Figure~\ref{fig:conv_rates_cr_vs_p1_tv} supports the latter interpretation
via logarithmic plots for the experimental convergence rates of the error quantity
\[
\|e_h\|^2 = \|\Pi_{h,0} u_h - u(x_\cT) \|^2, 
\]
where $x_\cT|_T = x_T$ for every $T\in \cT_\ell$,
versus the number of vertices $N_\ell \sim h_\ell^{-2}$. We observe that the $L^2$
error for the Crouzeix--Raviart method 
converges at the quasi-optimal rate $\cO(h^{1/2})$ while the $P1$ error
is larger and decays at a lower rate. The approximations were
computed on the triangulations $\cT_\ell$ for $\ell=3,4,\dots,9$ with
$N_\ell = (2^\ell+1)^2 = 81, 289, \dots ,66049, 263169$ vertices. 

\begin{figure}[p]
\includegraphics[width=.49\linewidth]{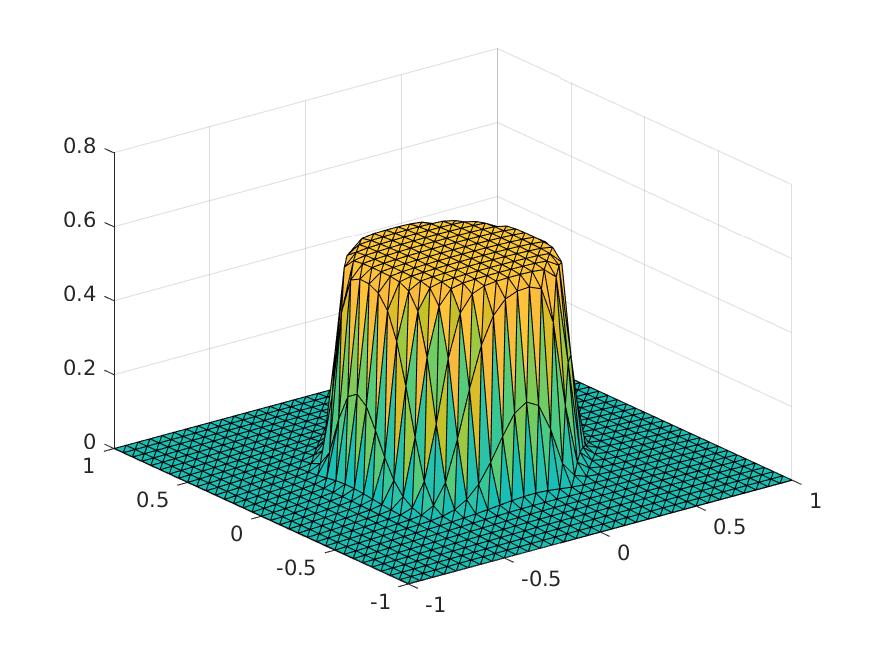} \hspace*{0.1mm}
\includegraphics[width=.49\linewidth]{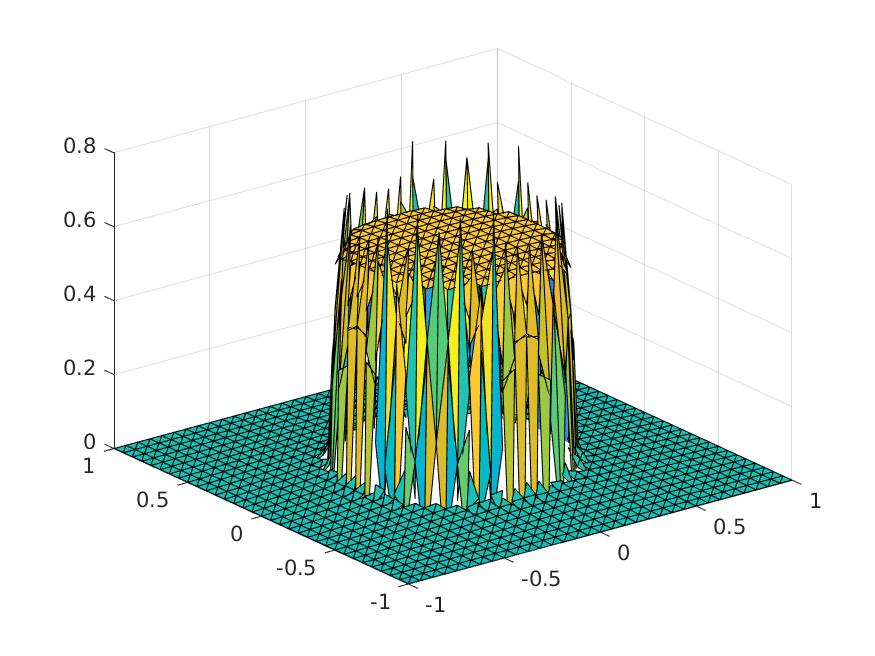} \\ 
\includegraphics[width=.49\linewidth]{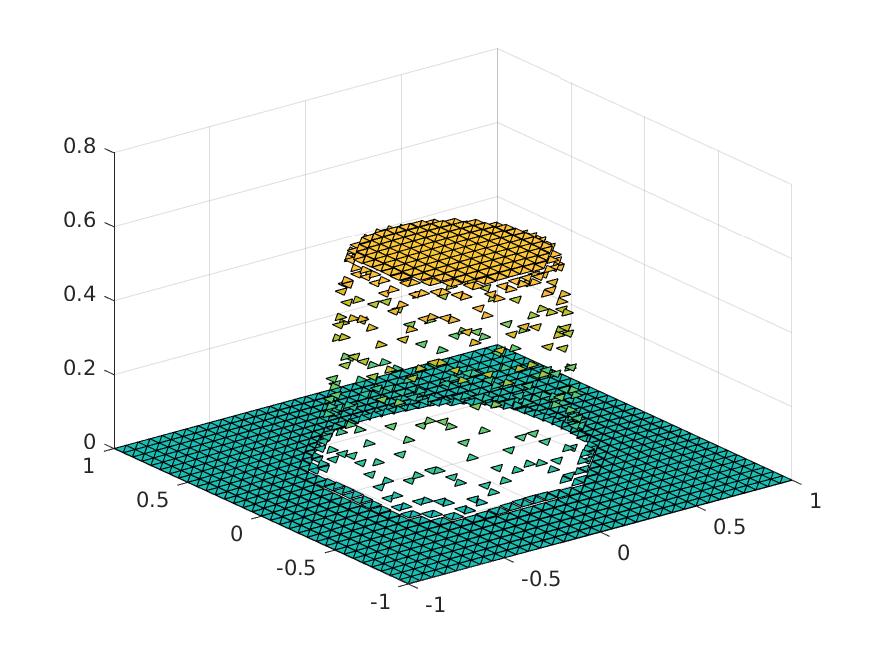} \hspace*{0.1mm}
\includegraphics[width=.49\linewidth]{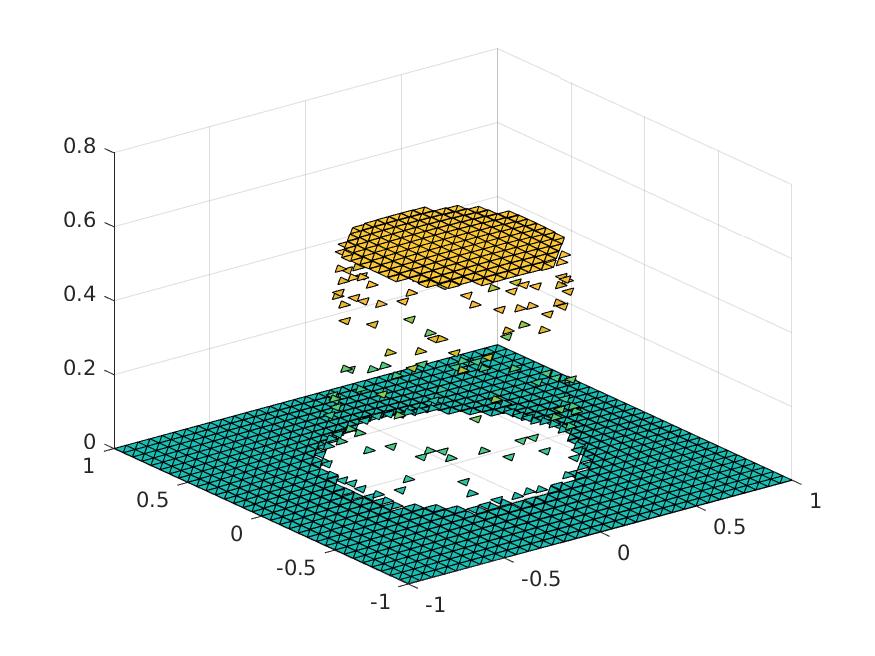} 
\caption{\label{fig:comp_p1_cr_tv} Continuous $P1$ (left) and Crouzeix--Raviart
approximations (right) in Example~\ref{ex:tv_exact} displayed 
as piecewise affine functions (top) and via their projections onto 
piecewise constant functions (bottom).}
\end{figure}

\begin{figure}[p]
\includegraphics[width=.65\linewidth]{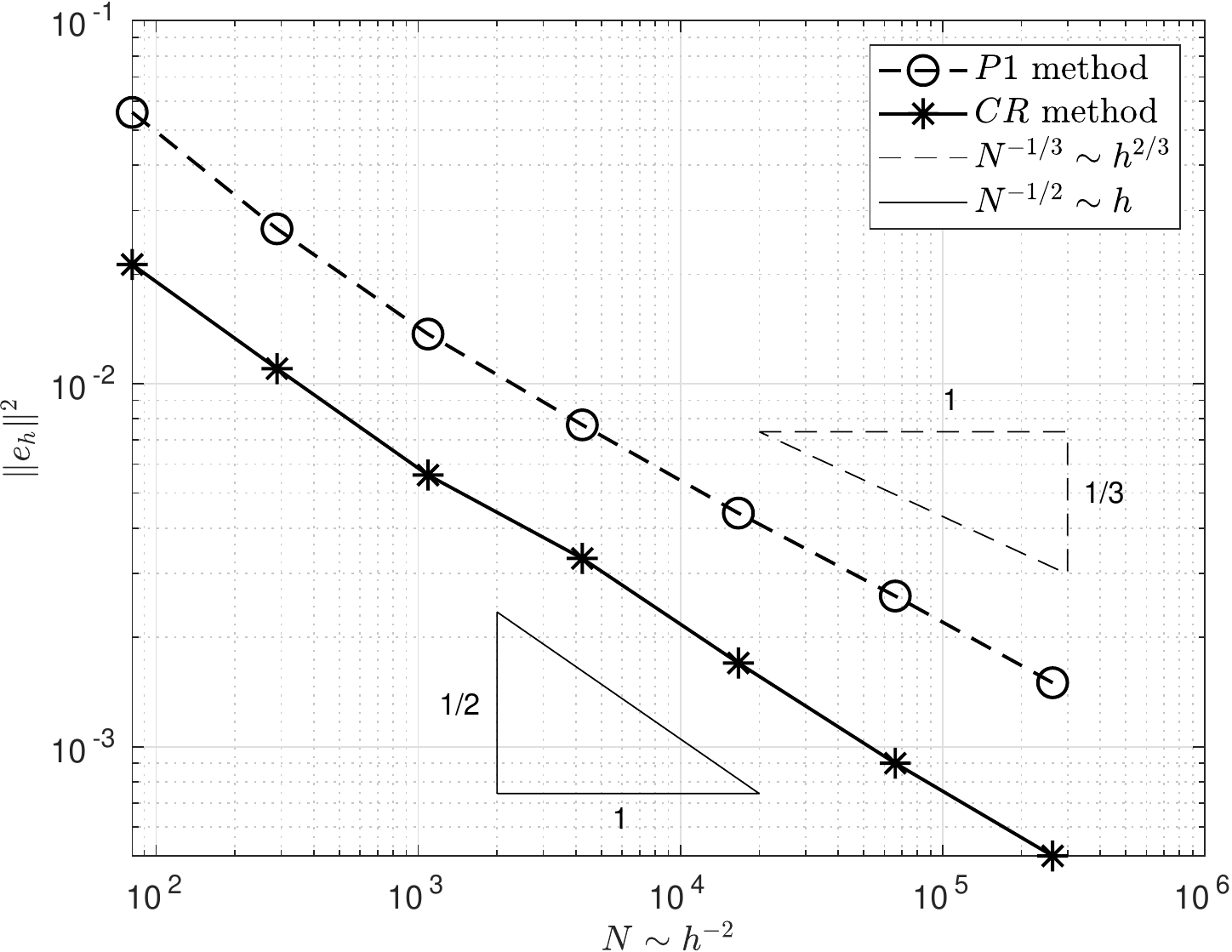}
\caption{\label{fig:conv_rates_cr_vs_p1_tv} 
Squared $L^2$ errors in Example~\ref{ex:tv_exact} for $P1$ and Crouzeix-Raviart 
approximations. The predicted rate $\cO(h^{1/2})$ is observed
for the Crouzeix--Raviart method while the 
$P1$ method leads to larger errors and a reduced rate.}
\end{figure}

\subsection{Effect of modification}
The operator $\Pi_{h,0}$ that occurs in the discrete dual problem via the term
$\phi^*(\Pi_{h,0} z_h)$  is crucial for the discrete duality theory and 
in fact simplifies the realization of the method as quadrature becomes
trivial. This does not affect the discrete flux variable $z_h$ but leads to
a modified discrete Langrange multiplier $\ou_h$. To illustrated this 
effect we consider the standard dual mixed formulation~\eqref{eq:poisson_mixed_discr} 
of the Poisson problem and the modified version which seeks 
$(z_h,\ou_h)\in \RT^0_{\!N}(\cT_h)\times \cL^0(\cT_h)$ satisfying 
\[
(\Pi_{h,0} z_h, y_h) + (\ou_h,\diver z_h) = 0, \quad (\ov_h,\diver y_h) = -(f_h,v_h),
\]
for all $(y_h,\ov_h)\in \RT^0_{\!N}(\cT_h)\times \cL^0(\cT_h)$. We specify 
the problem as follows.  

\begin{example}[Poisson problem]\label{ex:poisson}
Let $d=2$, $\O= (-1,1)^2$, $\GD = \p\O$, and $f(x,y) = 2(1-x^2)+2(1-y^2)$.
The solution of the dual mixed formulation of the Poisson problem is 
given by $u(x,y) = (1-x^2)(1-y^2)$ and $z= \nabla u$. 
\end{example} 

Figure~\ref{fig:conv_rates_rt_lump_vs_class} shows the $L^2$ errors
\[
\|e_h\| = \|\ou_h - u(x_\cT) \|, 
\]
versus numbers of vertices in $\cT_\ell$ with a logarithmic scaling
on both axes. The $L^2$ error for the modified treatment is larger
than that for the exact treatment but converges at the same quadratic
rate. This rate is higher than the expected linear convergence rate for
the difference $\|\ou_h - u\|$. An explanation is provided by the
relation $\ou_h =  u_h(x_T)$ to solutions $u_h$ of the 
Crouzeix--Raviart discretization for which we have $\|u_h-u\|_{L^\infty(\O)} 
= \cO(h^2\log(h))$, cf.~\cite{GasNoc87}.

\begin{figure}[htb]
\includegraphics[width=.65\linewidth]{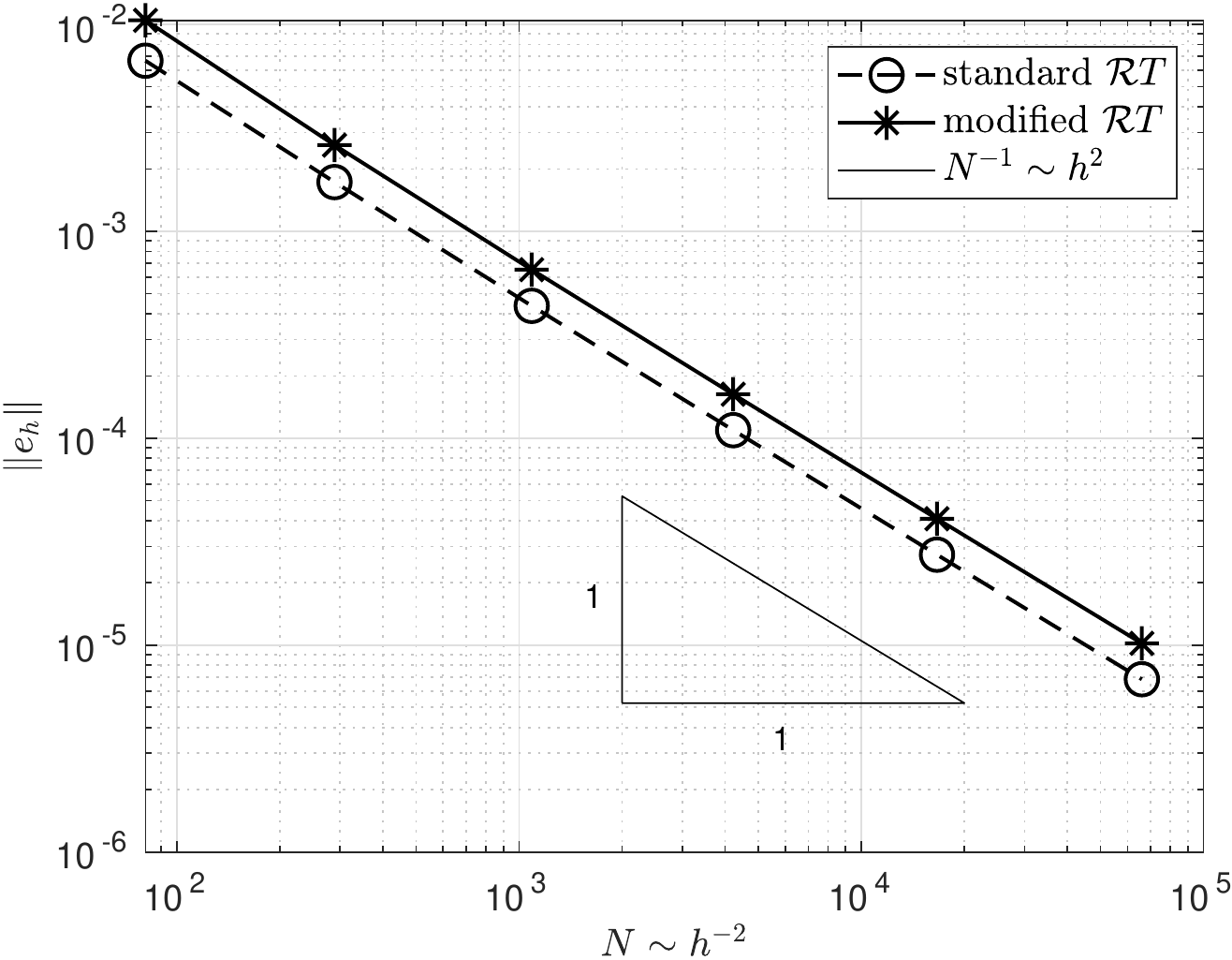}
\caption{\label{fig:conv_rates_rt_lump_vs_class} $L^2$ errors for the 
standard and modified Raviart--Thomas approximations of the Poisson problem
of Example~\ref{ex:poisson}.
An increased $L^2$ error is observed for the modified
treatment but both approximations converge with nearly qudratic rate.}
\end{figure}

\subsection{Infinity Laplacian}
We define an infinity Laplace problem via the primal functional
\[
I(u) = -\int_\O f u \dv{x}, \quad |\nabla u|\le 1,
\] 
on the set $W^{1,\infty}_D(\O)$ for a given function $f\in L^1(\O)$.
We approximate solutions by determining nearly maximizing discrete 
vector fields for the regularized dual functional
\[
D_\veps(z) = -\int_\O |z|_\veps\dv{x}, \quad -\diver z= f,
\]
with $|s|_\veps = (|s|^2 +\veps^2)^{1/2}$. 
We consider the following specification that leads to a Lipschitz
continuous solution.

\begin{example}[Infinity Laplacian]\label{ex:inf_laplace}
Let $d=2$, $\O =(-1,1)^2$, $\GD = \p\O$, and $f(x,y) =1$. Then
the solution of the primal problem is given by 
$u(x,y) = 1-\max\{|x|,|y|\}$. 
\end{example}

We use Algorithm~\ref{alg:dual} with the $L^2$ scalar product and $\tau =1$
to iteratively determine discrete
minimizers for $D_\veps$ using $\veps =h$. We also compute conforming 
approximations $u_h^c$ for the primal problem using a conforming $P1$ 
finite element method
and the ADMM iteration described in Remarks~\ref{rem:admm_primal_dual}.
Figure~\ref{fig:inf_laplace_ener} displays the resulting
approximation errors
\[
|D(z)-D_{\veps,h}(z_h)|, \quad |I(u)-I(u_h^c)|,
\]
obtained using the Raviart-Thomas method for the dual problem and
a standard conforming $P1$ method for the primal problem. We observe
that on right-angled triangulations the $P1$ method leads to an almost quadratic
convergence rate which is slightly better than the experimental convergence
rate $\cO(h^{5/3})$ observed for the Raviart--Thomas method. Surprisingly,
the nearly quadratic convergence behavior is also observed for 
$P1$ finite element approximations on perturbed triangulations. We note
however that in this case the admissibility of the nodal interpolant
is not true in general, cf.~Remark~\ref{rem:inf_laplace_p1}.

\begin{figure}[h!]
\includegraphics[width=.49\linewidth]{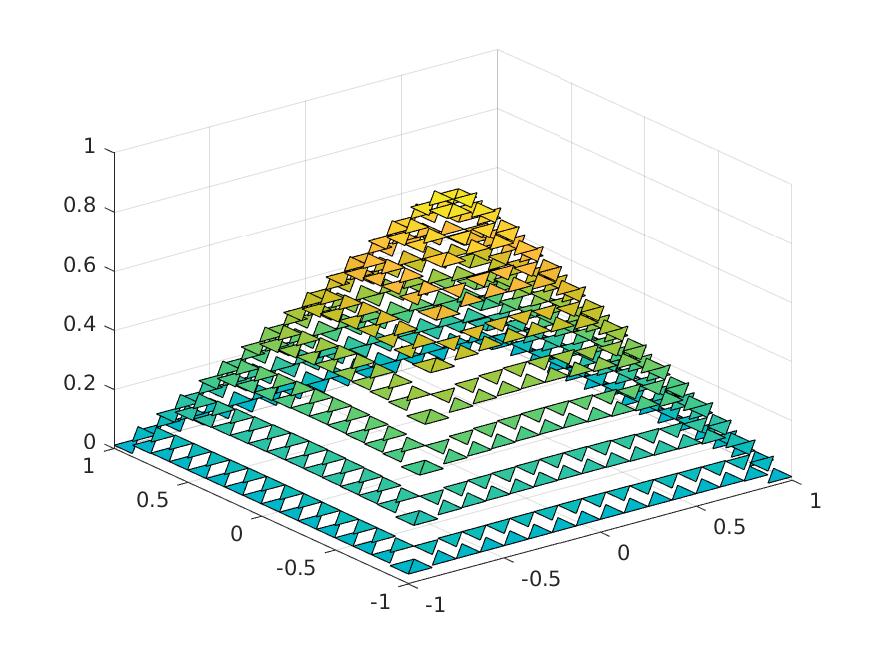} \hspace*{0.1mm}
\includegraphics[width=.49\linewidth]{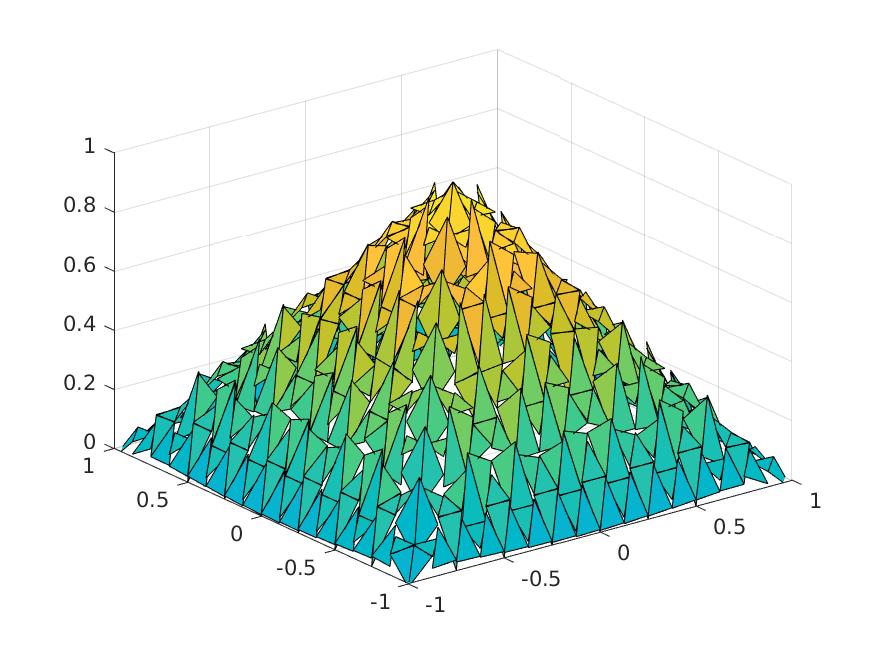} \\ 
\caption{\label{fig:sols_inf_laplace} Piecewise constant approximation of the
solution of the infinity Laplacian defined in Example~\ref{ex:inf_laplace}
obtained with the Raviart--Thomas discretization
of the regularized dual problem (left) and the corresponding reconstructed Crouzeix--Raviart 
approximation (right).}
\end{figure}

\begin{figure}[h!]
\includegraphics[width=.65\linewidth]{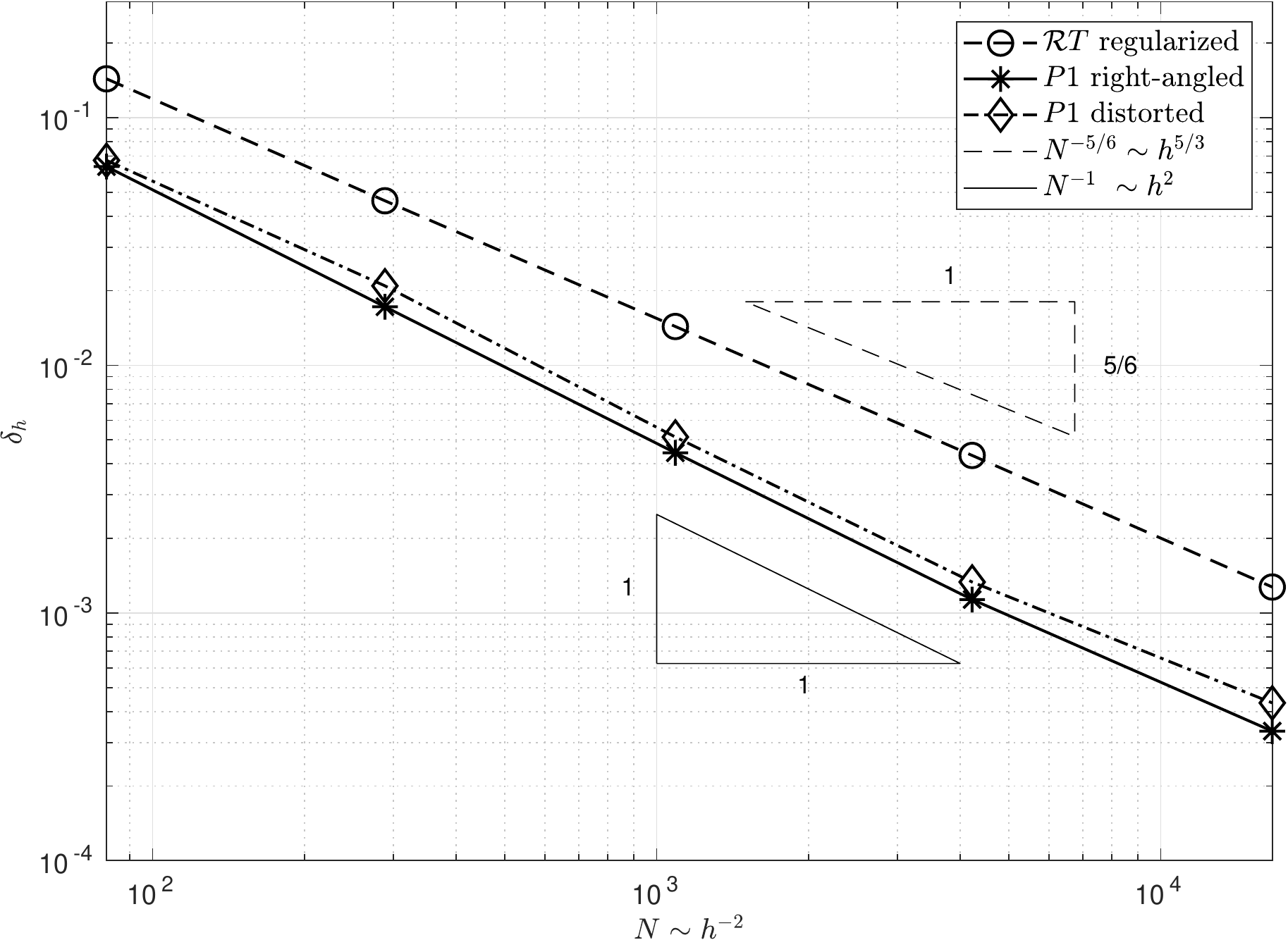}
\caption{\label{fig:inf_laplace_ener} Experimental convergence
rate for the approximation of the value $D(z)$ using the Raviart--Thomas
discretization $D_{h,\veps}$ and of $I(u)$ using a conforming $P1$ method
in the case of the inifinity Laplace problem of Example~\ref{ex:inf_laplace}.}
\end{figure}

\clearpage
\appendix
\section{Auxiliary results}

\subsection{Enriching operator}\label{app:enrich}
To prove the estimates for the enriching operator $E_h^\CR$ we follow the ideas 
described in~\cite{Bren15} and define for given $v_h\in \cS^{1,\CR}_D(\cT_h)$ 
a continuous, piecewise quadratic function 
$\a_h = E_h^\CR v_h \in \cS^{2,0}_D(\cT_h) \subset W^{1,p}_D(\O)$ 
by defining the nodal values $\a_z$ associated with vertices 
$z\in \cN_h$ and $\a_S$ associated with sides $S\in \cS_h$ via 
\[
\a_S = v_h(x_S)
\]
for all $S\in \cS_h$, and in case of vertices $\a_z = 0$ if $z\in \cN_h\cap \GD$
and otherwise
\[
\a_z = n_z^{-1} \sum_{T\in \cT_h: z \in T}  v_h|_T(z),
\]
where $n_z$ is the number of elements in $\cT_h$ that contain $z$, so that
constant functions are reproduced. With
a nodal basis $(\tvphi_\tz)_{\tz\in \cN_h \cup \cS_h}$ for $\cS^{2,0}(\cT_h)$ 
which defines a partition of unity with supports $\o_\tz$ of diameters
$h_\tz$ we have
\[\begin{split}
\|\nabla \a_h\|_{L^p(\O)}^p 
&\le \sum_{\tz \in \cN_h \cup \cS_h} \int_{\o_\tz} |\a_\tz - v_h| |\nabla \tvphi_\tz| |\nabla \a_h|^{p-1} \dv{x} \\
&\le  c \sum_{\tz \in \cN_h \cup \cS_h} \|\a_\tz - v_h \|_{L^p(\o_\tz)} h_\tz^{-1} \|\nabla \a_h \|_{L^p(\o_z)}^{p-1}.
\end{split}\]
To show that $\|\nabla \a_h \|_{L^p(\O)} \le c \|\nablah v_h\|_{L^p(\O)}$ it suffices
to prove that $\|\a_\tz - v_h \|_{L^p(\o_\tz)} \le c h_\tz \|\nablah v_h \|_{L^p(\o_\tz)}$. 
If $\tz= S$ then the piecewise affine function $w_h = \a_\tz -v_h$ vanishes at
$x_S$ and the estimate follows. If $\tz = z \in \GD$ then there exist $S\in \cS_h \cap \GD$ with
$z \in S$ and we have $\a_\tz = 0$ and $v_h(x_S) =0$ and again the estimate follows.
Finally, for $\tz = z\in\cN_h \setminus \GD$ we choose a side $S\in \cS_h$ with $z\in S$ and replace
$v_h$ by $v_h - v_h(x_S)$ which corresponds to replacing $\a_z$ by $\a_z - v_h(x_S)$.
In particular, the difference $\a_z - v_h$ remains unchanged.  Hence, we assume $v_h(x_S)= 0$ and estimate,
using $\|v_h\|_{L^\infty(T)} \le c h_z^{-d/p} \|v_h\|_{L^p(T)}$,   
\[
\|\a_z\|_{L^p(\o_z)} \le |\a_z| |\o_z|^p \le c h_z^{d/p} n_z^{-1} \sum_{T\in \cT_h,\, z\in T} \big|v_h|_T(z)\big| 
\le c \|v_h\|_{L^p(\o_z)}.
\]
Using that $v_h(x_S) = 0$ this implies that
\[
\|\a_z - v_h \|_{L^p(\o_z)}  \le c \|v_h\|_{L^p(\o_z)} \le c h_z \|\nablah v_h\|_{L^p(\o_z)}.
\]
The estimate $\|E_h^\CR v_h -v_h\|_{L^p(\O)}\le c h \|\nablah v_h\|_{L^p(\O)}$ follows from
noting that for every $\tz \in \cN_h\cup \cS_h$ we may choose $S\in \cS_h$ belonging to $\o_\tz$
and hence 
\[\begin{split}
\|E_h^\CR v_h -v_h\|_{L^p(\o_\tz)} 
&\le \|E_h^\CR v_h -v_h(x_S)\|_{L^p(\o_\tz)} + \|v_h -v_h(x_S)\|_{L^p(\o_\tz)} \\
&\le c h_\tz \big( \|\nabla E_h^\CR v_h \|_{L^p(\o_\tz)} + \|\nablah v_h\|_{L^p(\o_\tz)}\big),
\end{split}\]
since $E_h^\CR v_h (x_S) = v_h(x_S)$. 

\subsection{Proof of inequality~\eqref{eq:mon_phi}}\label{app:monotone}
We assume that $\vphi \in C^1(\R_{\ge 0})$ is convex 
and that $r\mapsto \vphi'(r)/r$ is positive, nonincreasing, 
and continuous on $\R_{\ge 0}$ and follow~\cite{BaDiNo18}. For $a,b\in \R^d$ the
identity $2 b\cdot (b-a) =  |b|^2 -|a|^2 + |b-a|^2$ yields that
\[
\frac{\vphi'(|a|)}{|a|} b\cdot (b-a) 
= \frac12 \frac{\vphi'(|a|)}{|a|} \big(|b|^2-|a|^2\big) 
 + \frac12 \frac{\vphi'(|a|)}{|a|} |b-a|^2.
\]
Since $r\mapsto \vphi'(r)/r$ is nonincreasing, 
the function $\tvphi(y) = \vphi(y^{1/2})$ is concave on $\R_{\ge 0}$,
so that we have 
\[
\tvphi'(y) (z-y) \ge \tvphi(z)-\tvphi(y),
\]
for all $y,z\ge 0$. With $y=|a|^2$ and $z=|b|^2$ we deduce that 
\[
\frac12 \frac{\vphi'(|a|)}{|a|} \big(|b|^2-|a|^2\big) 
\ge \vphi(|b|) -\vphi(|a|).
\]
Combining these inequalities implies the asserted inequality
\[
\frac{\vphi'(|a|)}{|a|} b\cdot (b-a) \ge \vphi(|b|) - \vphi(|a|) 
+ \frac12 \frac{\vphi'(|a|)}{|a|} |b-a|^2.
\]

\subsection{$p$-Dirichlet energies}\label{app:p_laplace}
For the $p$-Dirichlet energy defined via $\phi(a) = |a|^p/p$ it is shown 
in~\cite{DieKre08} that if $u\in W^{1,p}_D(\O)$ is minimal then we have with 
$F(a) = |a|^{p/2-1} a$ 
\[
c_p \|F(\nabla u)-F(\nabla v) \|^2 \le I(v)-I(u).
\]
The estimate carries over to the discretized functional $I_h$ using the Crouzeix--Raviart 
method. It is shown in~\cite{DiEbRu07} via Taylor approximations
that with $S(a) = D\phi(a) = |a|^{p-2} a$ 
and $\vphi_{|a|}(|c|) = (|a|+|c|)^{p-2} |c|^2$ we have
\[
(S(a)-S(b)) \cdot (a-b) \approx |F(a)-F(b)|^2 \approx \vphi_{|a|}(|a-b|).
\]
The relations hold also for the functionals $\tS$ and $\tF$ which are obtained
by replacing $p$ by $p'=p/(p-1)$. The article~\cite{DieRuz07} implies the estimate
\[
\|F(\nabla u) - F(\nablah \cI_\CR u) \| \le c h \|\nabla F(\nabla u)\|,
\]
provided that $F(\nabla u)\in W^{1,2}(\O;\R^d)$. 

\subsection*{Acknowledgments}
(i) Some time before his death John W. Barrett told me that he was working on
a generalization of Marini's formula, which unfortunately he was not able
to complete. His comment was an important inspiration for the results of
this article. I believe that many of the ideas presented here were known to
him. \\ 
(ii) The author acknowledges support by the DFG via the priority programmes
SPP 1748 {\em Reliable Simulation Techniques in Solid Mechanics: Development of Non-standard 
Discretization Methods, Mechanical and Mathematical Analysis} and 
SPP 1962 {\em Non-smooth and Complementarity-based Distributed Parameter Systems: 
Simulation and Hierarchical Optimization}.


\bibliographystyle{abbrv}
\bibliography{bib_tv_rt}

\end{document}